\title[Asymmetric cut and choose games]{Asymmetric cut and choose games}
\DeclareMathOperator{\restr}{\!\upharpoonright\!}
\DeclareMathOperator{\Ord}{Ord}
\newcommand{\B}{\mathbb Q}
\DeclareMathOperator{\range}{range}
\DeclareMathOperator{\dom}{dom}
\newcommand{\Pow}{P}
\DeclareMathOperator{\id}{id}
\DeclareMathOperator{\bd}{bd}
\DeclareMathOperator{\crit}{crit}
\DeclareMathOperator{\cof}{cof}
\DeclareMathOperator{\forces}{\Vdash}
\DeclareMathOperator{\GCH}{GCH}
\DeclareMathOperator{\CH}{CH}
\newcommand{\Coll}{\mathrm{Coll}}
\DeclareMathOperator{\Aut}{Aut}
\definecolor{dblue}{rgb}{0,0,0.70}
\author{Peter Holy}
\address{Technische Universit\"at Wien\\
Institute of discrete mathematics and geometry\\
Wiedner Hauptstraße 8–10/104, 1040 Wien\\
Austria}
\email{pholy@math.uni-bonn.de}
\author{Philipp Schlicht}
\address{School of Mathematics, University of Bristol, Fry Building, Woodland Road, Bristol, BS8 1UG, UK}
\email{philipp.schlicht@bristol.ac.uk}
\author{Christopher Turner}
\address{School of Mathematics, University of Bristol, Fry Building, Woodland Road, Bristol, BS8 1UG, UK}
\email{christopher.turner@bristol.ac.uk}
\author{Philip Welch}
\address{School of Mathematics, University of Bristol, Fry Building, Woodland Road, Bristol, BS8 1UG, UK}
\email{p.welch@bristol.ac.uk}
\subjclass[2020]{(Primary) 03E05; (Secondary) 03E55, 03E35}
\keywords{Cut and choose games, generically measurable cardinals, distributivity, Banach-Mazur games, precipitous ideals} 
\thanks{We would like to thank the anonymous referee for very detailed comments.} 
\thanks{The research of the first-listed author was supported by the Italian PRIN 2017 Grant \emph{Mathematical Logic: models, sets, computability} and by the Austrian Science Fund (FWF) Elise Richter grant number V844. The first-listed author would like to thank the logic group at the University of Udine for enabling him to give a series of lectures on some of the topics of these notes early in 2022.
This project has received funding from the European Union’s Horizon 2020 research and innovation programme under the Marie Sk\l odowska-Curie grant agreement No 794020 (Project IMIC) of the second-listed author. 
The third-listed author was supported by EPSRC grant number EP/R513179/1 and also by a scholarship from the Heilbronn Institute. 
This research was funded in whole or in part by EPSRC grant number EP/V009001/1 of the second- and fourth-listed authors.  For the purpose of open access, the authors have applied a ‘Creative Commons Attribution' (CC BY) public copyright licence to any Author Accepted Manuscript (AAM) version arising from this submission. 
}
\begin{document}

\begin{abstract}
  We investigate a variety of cut and choose games, their relationship with (generic) large cardinals, and show that they can be used to characterize a number of properties of ideals and of partial orders: certain notions of distributivity, strategic closure, and precipitousness. 
\end{abstract}

\maketitle

\tableofcontents 

\newcommand{\chkfront}{\scalebox{1.6}[1.0]{$\vee$}}
\newcommand{\UU}{\mathcal U}
\newcommand{\GG}{\mathcal G}
\newcommand{\PP}{\mathcal B}

\newtheorem{fact}{Fact}[section]
\newtheorem{lemma}[fact]{Lemma}
\newtheorem{theorem}[fact]{Theorem}
\newtheorem{corollary}[fact]{Corollary}
\newtheorem{claim}[fact]{Claim}
\newtheorem{subclaim}[fact]{Subclaim}
\newtheorem{conjecture}[fact]{Conjecture}
\newtheorem{observation}[fact]{Observation}
\newtheorem{proposition}[fact]{Proposition}
\newtheorem{counterexample}[fact]{Counterexample}
\theoremstyle{definition}
\newtheorem{question}[fact]{Question}
\newtheorem{remark}[fact]{Remark}
\newtheorem{definition}[fact]{Definition}

\section{Introduction}\label{section:ulam}

Many large cardinal principles and combinatorial properties of ideals and posets have been characterised via infinite games, for instance $\omega_1$-Erd\"os cardinals by Hans-Dieter Donder and Jean-Pierre Levinski \cite{donder1989some}, completely Ramsey cardinals by Ian Sharpe and the fourth-listed author \cite{MR2817562}, remarkable cardinals by Ralf Schindler \cite{SchindlerNotiz}, $\alpha$-Ramsey cardinals by the first- and second-listed authors \cite{MR3800756}, 
completely ineffable cardinals by the first-listed author and Philipp L\"ucke \cite{pp}, 
the precipitousness of ideals \cite[Lemma 22.21]{MR1940513} and the strategic closure of posets. 
Fundamental games related to large cardinals are cut and choose games that were first introduced and studied by Fred Galvin, Jan Mycielski and Stanis\l aw Ulam in the 1960s (see \cite[Section 2]{boban}). 
In this paper, we survey and extend results on these games and some of their variants, their connections with generic large cardinals and properties of ideals, in particular with precipitousness and distributivity. 

We will restrict our attention to \emph{asymmetric} cut and choose games. 
The word asymmetric refers to the fact that in the two player games studied here, the two players, which we call \emph{Cut} and \emph{Choose}, perform different tasks: One of them presents certain partitions while the other picks elements of those partitions. Similar games in which however both players cut and choose have been studied in the literature (see \cite[Page 185]{scheepers}, \cite[Page 249]{evolution}, \cite[Page~733]{boban}).
We start by introducing what seems to be perhaps the most standard notion of asymmetric cut and choose game.

\begin{definition}\label{definition:ulam}
  Let $\kappa$ be a regular cardinal, and let $\gamma\le\kappa$ be a limit ordinal. Let $\UU(\kappa,\gamma)$ denote the following game of length $\gamma$ on $\kappa$. Initially starting with all of $\kappa$, two players, \emph{Cut} and \emph{Choose}, take turns to make moves as follows. In each move, \emph{Cut} divides a given subset of $\kappa$ into two disjoint pieces, and then \emph{Choose} answers by picking one of them. In the next round, this set is then divided by \emph{Cut} into two disjoint pieces, one of which is picked by \emph{Choose} etc. At limit stages, intersections are taken. In the end, \emph{Choose} wins in case the final intersection of their choices contains at least two distinct elements, and \emph{Cut} wins otherwise.\footnote{See the beginning of Section \ref{section:weak} for a discussion concerning the requirement that the final intersection of choices contains at least two elements.}
\end{definition}


\medskip

Let us provide the following basic observation, which shows that the consideration of winning strategies for \emph{Cut} in games of the form $\UU(\kappa,\gamma)$ is not particularly interesting. It is probably essentially due to Stephen Hechler, however has never been published, and is vaguely mentioned in a footnote to \cite[Theorem~1]{gjm}.

\begin{observation}\label{observation:cutwinsulam}
  Let $\gamma\le\kappa$ be a limit ordinal. Then, \emph{Cut} has a winning strategy in the game \,$\UU(\kappa,\gamma)$ if and only if $\kappa\le 2^{|\gamma|}$.
\end{observation}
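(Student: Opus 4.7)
The plan is a binary tree encoding of elements of $\kappa$ that converts injections $\kappa\hookrightarrow {}^{|\gamma|}2$ into winning strategies for \emph{Cut}, and vice versa.

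For the forward direction, suppose $\kappa\le 2^{|\gamma|}$ and fix an injection $f\colon\kappa\hookrightarrow {}^{|\gamma|}2$. Since $\gamma$ is an infinite limit ordinal, the rounds of $\UU(\kappa,\gamma)$ at which the players actually move (that is, the non-limit rounds together with $0$) number $|\gamma|$; enumerate them as $\langle\beta_i : i<|\gamma|\rangle$. I would have \emph{Cut} play the strategy of, at round $\beta_i$ with current set $A$, partitioning $A$ as $\{x\in A:f(x)(i)=0\}\sqcup\{x\in A:f(x)(i)=1\}$. A straightforward induction shows that after round $\beta_i$ the current set consists of exactly those $x\in\kappa$ for which $f(x)$ agrees with a fixed partial function on the coordinates played up to that point, and intersections at limit stages preserve this description. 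Hence the final intersection is contained in $f^{-1}\{s\}$ for some $s\in {}^{|\gamma|}2$, so has at most one element, and \emph{Cut} wins.

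For the converse, suppose $\sigma$ is a winning strategy for \emph{Cut}. For each $x\in\kappa$, let $p_x$ denote the unique play of $\UU(\kappa,\gamma)$ in which \emph{Cut} plays by $\sigma$ and \emph{Choose} always picks the piece containing $x$. Since any play against $\sigma$ is entirely determined by \emph{Choose}'s binary decisions over the $|\gamma|$ rounds in which \emph{Choose} moves, there are at most $2^{|\gamma|}$ such plays. If $x\ne y$ gave $p_x=p_y$, then both $x$ and $y$ would lie in every set picked by \emph{Choose} and thus in the final intersection, so \emph{Choose} would beat $\sigma$, a contradiction. Therefore $x\mapsto p_x$ is injective, giving $\kappa\le 2^{|\gamma|}$.

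There is no substantial obstacle; the only subtlety is bookkeeping. Although $\UU(\kappa,\gamma)$ has length $\gamma$, only $|\gamma|$ of its rounds carry actual moves by the players, so binary sequences of length $|\gamma|$ (not $\gamma$) are the correct objects to pair with $\kappa$. Modulo this observation, the argument is a clean duality between injections $\kappa\hookrightarrow{}^{|\gamma|}2$ and winning strategies for \emph{Cut}.
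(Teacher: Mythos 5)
Your proposal is correct and takes essentially the same approach as the paper: the forward direction is the same bit-by-bit partitioning of $\kappa$ via an injection into a binary sequence space, and your injectivity of $x\mapsto p_x$ is exactly the paper's observation that the ${\le}1$-element intersections along the $2^{|\gamma|}$ branches of the strategy tree must cover $\kappa$. The bookkeeping point about $|\gamma|$ versus $\gamma$ is harmless, since the set of rounds of a length-$\gamma$ game has cardinality $|\gamma|$ under either reading of when moves occur.
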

\begin{proof}
  First, assume for a contradiction that \emph{Cut} has a winning strategy $\sigma$, however $\kappa>2^{|\gamma|}$. The strategy $\sigma$ can be identified with a full binary tree $T$ of height $\gamma$, where the root of the tree is labelled with $\kappa$, and if a node of the tree is labelled with $y$, then its immediate successor nodes (in the natural ordering of the tree, which is by end-extension) are labelled with the sets from the partition that is the response of $\sigma$ to the sequence of cuts and choices leading up to the choice of $y$, and limit nodes are labelled with the intersection of the labels of their predecessors. Since $\sigma$ is a winning strategy, the intersection of labels along any branch of $T$\footnote{For the purposes of this paper, a branch through a tree is a sequence that is increasing and downward closed in $T$ with respect to its ordering, and of length the height of $T$.} has cardinality at most one. But note that the union over all these intersections has to be $\kappa$, which clearly contradicts our assumption, for the number of branches is $2^{|\gamma|}<\kappa$.
  
  \medskip  
  
  Now assume that $\kappa\le 2^{|\gamma|}$. We may thus identify $\kappa$ with a subset $X$ of the higher Cantor space ${}^\gamma 2$.\footnote{This is the space with underlying set ${}^\gamma 2$, and with the bounded topology, that is with basic clopen sets of the form $[g]=\{f\in{}^\gamma 2\mid f\supseteq g\}$ for $g\colon\delta\to 2$ for some $\delta<\gamma$.} The winning strategy for \emph{Cut} is to increasingly partition the space ${}^\gamma 2$ (and thus also $X$) in $\gamma$-many steps using basic open sets of the form $[f]$ for functions $f\colon\xi\to 2$, with increasingly large $\xi<\gamma$. 
  In the end, the intersection of all of the sets that \emph{Choose} picked in a run of the game can clearly only contain at most one element, yielding \emph{Cut} to win, as desired.
\end{proof}

By the below remark, for a fixed $\gamma$, what is interesting is the \emph{least} $\kappa$ such that \emph{Choose} has a winning strategy in the game $\UU(\kappa,\gamma)$.

\begin{remark}\label{remark:interesting}
  If there is some cardinal $\kappa$ such that \emph{Choose} has a winning strategy in the game $\UU(\kappa,\gamma)$, then they have a winning strategy in the game $\UU(\theta,\gamma)$ whenever $\theta>\kappa$ as well: they simply pick their choices according to their intersection with $\kappa$.
\end{remark}

The games $\UU(\kappa,\gamma)$ are closely tied to large cardinals. 
If $\kappa$ is measurable, then it is easy to see that \emph{Choose} has a winning strategy for $\UU(\kappa,\omega)$, and in fact for $\UU(\kappa,\gamma)$ whenever $\gamma<\kappa$ (see Observation \ref{observation:measurable}). 
And actually, measurable cardinals are necessary in some way: If \emph{Choose} has a winning strategy in $\UU(\kappa,\omega)$, then there exists a measurable cardinal in an inner model (see Theorem \ref{theorem:ulamstrength}). 
Furthermore, variants of this game can be used to characterize weakly compact cardinals (see Observation \ref{observation:cutwinsidealulam}), various notions of distributivity (Section \ref{section:distributivity}), strategic closure of posets and precipitousness of ideals (Section \ref{section:ulamandprecipitous}). 

\medskip 

Various other interesting classes of games can be obtained from the above cut and choose games by the following adjustments, 
several of which have been studied in the set theoretic literature before. 


\begin{enumerate} 
\item 
\label{list of various games win} 
{ Winning conditions:} \vspace{2pt} 
\begin{enumerate} 
\item 
\label{list of various games win rounds} 
{\it Final requirements.} 
Instead of the requirement that the final intersection cannot have size at most $1$, this should hold in each round while the final intersection is only required to be nonempty. 
This is the weakest possible cut and choose game in the sense that it is easiest for \emph{Choose} to win. 
We study this variant in Section \ref{section:weak}. 

\item 
\label{list of various games win small} 
{\it Notions of smallness.} 
The family of subsets of $\kappa$ of size at most $1$ is replaced by an arbitrary monotone family, i.e.\ a family of subsets of $\kappa$ that is closed under subsets. 
A canonical choice is the bounded ideal $\bd_\kappa$ on $\kappa$, or other ${<}\kappa$-closed ideals on $\kappa$ that extend $\bd_\kappa$. We study such generalizations in Section \ref{section:ideal}. 
\end{enumerate} 

\item 
\label{list of various games moves} 
{ Types of moves:} \vspace{2pt} 
\begin{enumerate} 
\item 
\label{list of various games moves 1} 
{\it Partitions.} 
Each move of \emph{Cut} is a partition of $\kappa$ into a number of pieces which are disjoint only modulo a ${<}\kappa$-closed ideal $I$ in Section~\ref{section:generalizations}. 
This leads to characterizations of various notions of distributivity of $I$ in Section \ref{section:distributivity} and precipitousness of $I$ in Section \ref{section:ulamandprecipitous}. 

\item 
{\it Poset games.} 
The moves of \emph{Cut} are maximal antichains in a poset, of which \emph{Choose} picks one element. In order for \emph{Choose} to win, their choices need to have lower bounds in the poset.
This is used in Section~\ref{section:distributivity} to characterize notions of distributivity. 
\end{enumerate} 

\end{enumerate} 

\begin{remark} 
Note that poset games can have arbitrary length, and after Definition~\ref{definition:UlamonBA}, we will also briefly consider games of length ${\ge}\kappa$ as in \eqref{list of various games moves 1}. A natural extension of the games in \eqref{list of various games win rounds}  to games of length ${\geq}\kappa$, which we do not study in this paper, is obtained in \emph{filter games} by weakening the winning condition for \emph{Choose} 
to the requirement that the set of their choices generates a ${<}\kappa$-closed filter. 
Introducing \emph{delays} in this game, i.e., allowing \emph{Cut} to make $\kappa$ moves in each single round before it is \emph{Choose}'s turn to make $\kappa$-many choices, leads to characterisations of 
the $\alpha$-Ramsey cardinals defined in \cite{MR3800756} (see also \cite{nielsen-welch, foreman2020games}). 
\end{remark}

We shall provide an overview of results on the existence of winning strategies for various cut and choose games, and their connections with generic large cardinals, and combinatorial properties of ideals and posets. 
This includes a number of previously unpublished proofs, extensions of known results to more general settings and new results. 

We will show the following results concerning the above types of games. 
In Section \ref{section:weak}, 
we show that \emph{Choose} having a winning strategy in the games in \eqref{list of various games win rounds} has the consistency strength of a measurable cardinal. 
In Section \ref{section:ideal}, 
we show that certain instances of generic measurability of $\kappa$ suffice in order for \emph{Choose} to win games defined relative to ideals on $\kappa$ as in \eqref{list of various games win small}. 
In Section \ref{section:smallinaccessibles}, we show that starting from a measurable cardinal, one can force to obtain a model in which the least cardinal $\kappa$ such that \emph{Choose} wins $\UU(\kappa,\omega)$ is a non-weakly compact inaccessible cardinal. 
In Section \ref{section:distributivity}, we investigate the close connections between the existence of winning strategies for \emph{Cut} in certain cut and choose games and various notions of distributivity. In Section \ref{section:ulamandprecipitous}, we investigate connections with Banach-Mazur games on partial orders, showing in particular that these Banach-Mazur games, which will be defined in Section \ref{section:ulamandprecipitous}, are equivalent to certain cut and choose games. In Section~\ref{section:questions}, we make some final remarks and provide some open questions.

\section{The weakest cut and choose game}\label{section:weak}

Regarding Definition \ref{definition:ulam}, it may seem somewhat odd to require \emph{two} elements in the final intersection of choices in order for \emph{Choose} to win games of the form $\UU(\kappa,\gamma)$. But note that if we required only \emph{one} element in this intersection, then \emph{Choose} easily wins any of these games by fixing some ordinal $\alpha<\kappa$ in advance, and then simply picking the set that contains $\alpha$ as an element in each of their moves, for this $\alpha$ will then clearly be contained as an element in the final intersection of their choices as well. By requiring \emph{two} elements in the final intersection of their choices, this strategy is not applicable as soon as \emph{Cut} plays a partition of the form $\langle\{\alpha\},Y\rangle$.

\medskip

In this section, we will be considering canonical variants of the games $\UU(\kappa,\gamma)$. Among the cut and choose games of length $\gamma$ that we consider in this paper, these are the easiest for \emph{Choose} to win. They have (or rather, an equivalent form of them has) already been considered in unpublished work of Galvin (see \cite[Section 3, A game of Galvin]{scheepers}).

\begin{definition}
  Let $\kappa$ be a regular uncountable cardinal, and let $\gamma\le\kappa$ be a limit ordinal. Let $\UU(\kappa,{\le}\gamma)$ denote the following game of length (at most) $\gamma$ on the cardinal~$\kappa$. As in the game $\UU(\kappa,\gamma)$, starting with all of $\kappa$, players \emph{Cut} and \emph{Choose} take turns, with \emph{Cut} dividing a given subset of $\kappa$ in two, and \emph{Choose} picking one of the pieces and returning it to \emph{Cut} for their next move. \emph{Cut} wins and the game immediately ends if \emph{Choose} ever picks a singleton. At limit stages, intersections are taken. If the game lasts for $\gamma$-many stages, \emph{Choose} wins in case the final intersection of their choices is nonempty. Otherwise, \emph{Cut} wins.
\end{definition}

Note that unlike in the games $\UU(\kappa,\gamma)$, fixing one element $\alpha\in\kappa$ at the beginning of the game, and picking the set which contains $\alpha$ as an element in each of their moves is not a winning strategy in the games $\UU(\kappa,{\le}\gamma)$, since \emph{Cut} can play a partition of the form $\langle\{\alpha\},X\rangle$ at some point, so that \emph{Choose} would pick $\{\alpha\}$, and would thus immediately lose such a run.

\medskip

%

The games $\UU(\kappa,{\le}\gamma)$ behave somewhat differently with respect to the existence of winning strategies for \emph{Cut}. At least the forward direction in the following observation is attributed to unpublished work of Galvin, and independently to Hechler in~\cite{scheepers}. We do not know of any published proof of this result. For arbitrary ordinals~$\gamma$, we let $2^{<\gamma}=\sup\{2^\delta\mid\delta<\gamma$ is a cardinal$\}$.

\begin{observation}\label{cut does not win the weak Ulam game for large kappa} 
  If $\kappa$ is a regular uncountable cardinal and $\gamma<\kappa$ is a limit ordinal, then \emph{Cut} has a winning strategy in the game $\UU(\kappa,{\le}\gamma)$ if and only if $\kappa\le 2^{<\gamma}$.

\end{observation}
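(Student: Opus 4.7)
The plan is to establish both implications by analyzing the binary tree of positions generated by a strategy for \emph{Cut}.

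For the forward direction, assume \emph{Cut} has a winning strategy $\sigma$ and associate to it a binary tree $T$ of height at most $\gamma$, where a node at level $\xi$ encodes a sequence of Choose-responses up to stage $\xi$ and is labelled by the resulting current set. For each $\alpha \in \kappa$, I would consider the play against $\sigma$ in which, at each stage, \emph{Choose} picks the unique piece of the presented partition containing $\alpha$ (inductively $\alpha$ belongs to the current set, so this play is well-defined). Since $\sigma$ is winning, this play cannot last all $\gamma$ stages (otherwise $\alpha$ would lie in the final intersection, so \emph{Choose} would win), so it must terminate at some stage $\xi_\alpha < \gamma$ with \emph{Choose} forced to pick a singleton, which must be $\{\alpha\}$ as $\alpha$ belongs to it. Distinct ordinals therefore yield distinct terminal nodes of $T$, so $\kappa$ injects into the set of nodes of $T$ at levels below $\gamma$, which has cardinality at most $\sum_{\xi < \gamma} 2^{|\xi|}$. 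A short cardinal-arithmetic computation using $|\gamma| < \kappa$ and regularity of $\kappa$ then yields $\kappa \le 2^{<\gamma}$.

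For the backward direction, assume $\kappa \le 2^{<\gamma}$. I would first observe, using regularity of $\kappa$ and $|\gamma| < \kappa$, that the supremum in the definition of $2^{<\gamma}$ must be witnessed below $\kappa$: there is a cardinal $\delta < \gamma$ with $\kappa \le 2^\delta$. Fix an injection $\iota \colon \kappa \to {}^\delta 2$, and define \emph{Cut}'s strategy as follows: at stage $i < \delta$ with current set $X_i$, play the partition $\langle \{z \in X_i : \iota(z)(i)=0\}, \{z \in X_i : \iota(z)(i)=1\} \rangle$, and at stages $\delta \le i < \gamma$ play the trivial partition $\langle X_i, \emptyset \rangle$. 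After $\delta$ stages the current set contains at most one element, namely the unique $z \in \kappa$ (if any) whose $\iota$-image agrees with Choose's bit-sequence; hence \emph{Choose} is either forced to pick a singleton at some stage (losing immediately) or has picked the empty set at some stage, so that the final intersection at stage $\gamma$ is empty.

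The main obstacle is not conceptual but rather the cardinal-arithmetic bookkeeping: both directions rely on extracting a cardinal $\delta < \gamma$ witnessing $2^{<\gamma}$ from below, which requires regularity of $\kappa$ together with $|\gamma| < \kappa$, and also the small fact that $|\gamma| \le 2^{<\gamma}$, verified separately in the cases where $\gamma$ is, or is not, itself a cardinal. A minor further subtlety in the backward direction is that the strategy uses partitions in which one piece may be empty, but this is harmless since a play in which \emph{Choose} ever picks $\emptyset$ continues trivially to stage $\gamma$ with empty final intersection.
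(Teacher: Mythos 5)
Your proof is correct, and while the forward direction is essentially the paper's argument, the backward direction takes a genuinely different route. For the forward direction you, like the paper, identify the strategy with a binary tree of positions, note that each $\alpha<\kappa$ must eventually be split off as the singleton $\{\alpha\}$ (since otherwise the play in which \emph{Choose} tracks $\alpha$ would survive to stage $\gamma$ with $\alpha$ in the final intersection), and count nodes; one small remark is that regularity of $\kappa$ is not actually needed here, since $\sum_{\xi<\gamma}2^{|\xi|}\le|\gamma|\cdot 2^{<\gamma}=2^{<\gamma}$ already follows from $|\gamma|\le 2^{<\gamma}$. For the backward direction, the paper constructs a special set $X\subseteq{}^{\gamma}2$ of size $2^{<\gamma}$ whose elements are ``self-announcing'' --- as soon as an even coordinate equals $1$, the element is completely determined --- so that bit-by-bit cutting over the full length $\gamma$ forces \emph{Choose} either to pick a singleton or to end with empty intersection; this works uniformly, without using the regularity of $\kappa$, even when the supremum defining $2^{<\gamma}$ is not attained. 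You instead invoke regularity together with $\gamma<\kappa$ to extract a single cardinal $\delta<\gamma$ with $\kappa\le 2^{\delta}$ (correctly: otherwise $\kappa=2^{<\gamma}$ would be a supremum of at most $|\gamma|<\kappa$ smaller cardinals, contradicting regularity), and then run the plain bit-splitting strategy for $\delta$ rounds followed by padding. Your version is shorter and more elementary under the stated hypotheses; the paper's construction buys independence from the regularity hypothesis in this direction and is the form that is reused for the ideal-game analogue in Observation~\ref{observation:cutwinsidealulam}. The degenerate issues you flag (partitions with an empty piece, \emph{Choose} retreating to $\emptyset$) are present in the paper's strategy as well and are handled by the same convention, so they do not constitute a gap.
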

\begin{proof}
    Assume first that \emph{Cut} has a winning strategy $\sigma$ in the game $\UU(\kappa,{\le}\gamma)$. The strategy $\sigma$ can be identified with a binary tree $T$ of height $\gamma$, where the root of the tree is labelled with $\kappa$, and if a node of the tree is labelled with a set $X$ which is not a singleton, then its immediate successor nodes are labelled with the sets from the partition that is the response of $\sigma$ to the sequence of cuts and choices leading up to the choice of $X$, and limit nodes are labelled with the intersection of the labels of their predecessors. $\sigma$ being a winning strategy means that the intersection of labels along any branch of $T$ of length $\gamma$ is empty. Thus, for each ordinal $\alpha<\kappa$ there has to be a node labelled with $\{\alpha\}$, for this is the only reason why $\alpha$ would not appear in an intersection of choices along some branch of $T$. However there are only at most $2^{<\gamma}$-many nodes in this tree, hence $\kappa\le 2^{<\gamma}$.
    
    Now assume that $\kappa\le 2^{<\gamma}$. Let $X\subseteq{}^\gamma 2$ be such that for $y\in {}^\gamma 2$, we have $y\in X$ if and only if there is $\alpha<\gamma$ such that for all $\beta<\gamma$,
    \begin{itemize}
      \item $y(2\cdot\beta)=1\,\leftrightarrow\,\beta\ge\alpha$ and
      \item $\beta\ge\alpha\,\to\,y(2\cdot\beta+1)=1$.
    \end{itemize}
    By our assumption on $\kappa$, we may identify $\kappa$ with a subset $Y$ of the space $X$. The winning strategy for \emph{Cut} is to increasingly partition the space $Y$ in $\gamma$-many steps using sets of the form $[f]\cap Y$ for functions $f\colon\xi\to 2$ with increasingly large $\xi<\gamma$. That is, during a run of $\UU(\kappa,{\le}\gamma)$, \emph{Cut} and \emph{Choose} work towards constructing a function $F\colon\gamma\to 2$, the only possible element of the intersection of all choices of \emph{Choose}, fixing one digit in each round of the game. Now in any even round, \emph{Choose} cannot possibly pick the digit $1$, for this would correspond to picking a set $[f]\cap Y$ that is only a singleton, by the definition of the set $X$. But this means that either \emph{Cut} already wins at some stage less than $\gamma$, or that $F\colon\gamma\to 2$ is not an element of $X$. But this again means that \emph{Cut} wins, for it implies that the intersection of all choices of \emph{Choose} is in fact empty.
\end{proof}

\emph{Choose} having a winning strategy in $\UU(\kappa,{\le}\omega)$ for some cardinal $\kappa$ has the consistency strength of a measurable cardinal. A slightly weaker version of this result for the game $\UU(\kappa,\gamma)$ with essentially the same proof, that is due to Silver and Solovay, appears in \cite[Page 249]{evolution}. However, the proof that is presented there is somewhat incomplete (in particular, the argument for what would correspond to Claim \ref{wellfoundedultrapower} below is missing), and we do not know of any other published proof of this result. For this reason, even though it is just a minor adaption of a classic result, we would like to provide a complete argument for the below.

\begin{definition}
  We say that $\kappa$ is \emph{generically measurable as witnessed by the notion of forcing $P$} if in every $P$-generic extension, there is a $V$-normal $V$-ultrafilter on~$\kappa$ that induces a well-founded (generic) ultrapower of $V$. Equivalently, in every $P$-generic extension $V[G]$, there is an elementary embedding $j\colon V\to M$ with critical point $\kappa$ for some transitive $M\subseteq V[G]$.
\end{definition}

We will make use of the following standard fact. 
We include its short proof for the benefit of the reader. 

\begin{fact}\label{fact:crit}
  Assume that $U$ is a nonprincipal $V$-ultrafilter on $\kappa$ in a $P$-generic extension of the universe, that $U$ yields a wellfounded ultrapower of $V$, and that $j$ is the generic embedding induced by $U$. Let $\delta\le\kappa$ be the $V$-\emph{completeness} of $U$, that is the least $\bar\delta$ such that $U$ is not $V$-$<\bar\delta^+$-complete. Then, $\crit j=\delta$.
\end{fact}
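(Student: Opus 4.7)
The plan is to establish $\crit j = \delta$ by proving the inequalities $\crit j \ge \delta$ and $\crit j \le \delta$ separately, in each case exploiting the representation of ordinals in the generic ultrapower by functions lying in $V$.

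For $\crit j \ge \delta$, I would argue by contradiction. Suppose $\mu := \crit j < \delta$. Since $\mu < j(\mu)$, one can find a function $f \colon \kappa \to \mu$ in $V$ with $[f]_U = \mu$: first represent $\mu$ as $[g]_U$ for some $g \in V$, then truncate $g$ below $\mu$ on a measure-one set. Because $j$ fixes every $\alpha < \mu$, for each such $\alpha$ we have $j(\alpha) = \alpha < [f]_U$, so the set $X_\alpha := \{\beta < \kappa : f(\beta) > \alpha\}$ lies in $U$. The family $\{X_\alpha : \alpha < \mu\}$ belongs to $V$, has size $\mu < \delta$, and has empty intersection since $f$ takes values strictly below $\mu$. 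This witnesses a failure of $V$-${<}\mu^+$-completeness of $U$, contradicting the minimality of $\delta$.

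For $\crit j \le \delta$, the hypothesis supplies a family $\{X_\alpha : \alpha < \delta\}$ in $V$ with each $X_\alpha \in U$ but $\bigcap_{\alpha<\delta} X_\alpha \notin U$. Define $f \colon \kappa \to \delta$ in $V$ by $f(\beta) = \min\{\alpha < \delta : \beta \notin X_\alpha\}$ whenever this minimum exists (and $0$ elsewhere); this prescription is well-defined on a set in $U$, namely the complement of the failing intersection. For any $\alpha < \delta$ the set $\{\beta : f(\beta) \ge \alpha\}$ contains $\bigcap_{\alpha' < \alpha} X_{\alpha'}$, an intersection of size ${<}\delta$ and hence in $U$ by $V$-${<}\delta$-completeness. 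Therefore $[f]_U \ge j(\alpha)\ge \alpha$ for every $\alpha < \delta$, giving $[f]_U \ge \delta$. Since the range of $f$ is contained in $\delta$, we also have $[f]_U < j(\delta)$, so $\delta < j(\delta)$ and hence $\crit j \le \delta$.

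The only subtlety — rather than a genuine obstacle — is careful bookkeeping of which objects live in $V$: both the families of sets witnessing failures of completeness and the representing functions $f$ must be elements of $V$, so that they fit the definition of $V$-completeness and give legitimate representatives of ordinals in the generic ultrapower. Once this is observed, the proof is the classical argument for the completeness of a countably complete ultrafilter, transplanted into the generic setting.
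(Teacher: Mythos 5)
Your proof is correct and follows essentially the same route as the paper's: both directions reduce to the classical interplay between $V$-completeness and the critical point, and your function $f(\beta)=\min\{\alpha<\delta:\beta\notin X_\alpha\}$ is exactly the paper's function $h$ obtained from the disjointified partition $\langle E_\xi\mid\xi<\delta\rangle$. One cosmetic slip: $\{\beta: f(\beta)\ge\alpha\}$ need not contain all of $\bigcap_{\alpha'<\alpha}X_{\alpha'}$ (points of the full intersection $\bigcap_{\alpha'<\delta}X_{\alpha'}$ receive $f$-value $0$), but it does contain $\bigcap_{\alpha'<\alpha}X_{\alpha'}$ minus that full intersection, which still lies in $U$ because, as you already observed, the complement of the full intersection is in $U$ and $U$ is $V$-${<}\delta$-complete.
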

\begin{proof}
  Pick a partition $\langle E_\xi\mid\xi<\delta\rangle\in V$ of $\kappa$ with each $E_\xi\not\in U$. Define $h\colon\kappa\to\delta$ by letting $h(\alpha)=\xi$ if $\alpha\in E_\xi$. Using that $U$ is nonprincipal, and letting $c_\alpha$ denote the function with domain $\kappa$ and constant value $\alpha$ for any ordinal $\alpha$, we have that $[c_\alpha]_U<[h]_U<[c_\delta]_U$ for every $\alpha<\kappa$. It follows that $j(\delta)>\delta$, and by the $V$-${<}\delta$-completeness of $U$, we obtain that $\crit j=\delta$, as desired.
\end{proof}

\begin{theorem}\label{theorem:ulamstrength}
  If $\gamma<\kappa$ are regular cardinals, and \emph{Choose} has a winning strategy~$\sigma$ in the game $\UU(\kappa,{\le}\gamma)$, then there exists a generically measurable cardinal below or equal to $\kappa$, as witnessed by ${<}\gamma$-closed forcing.
\end{theorem}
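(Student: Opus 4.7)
The plan is to use the winning strategy $\sigma$ to define a natural ${<}\gamma$-closed forcing $P$ so that, in a $P$-generic extension $V[G]$, one extracts a nonprincipal $V$-ultrafilter $U$ on $\kappa$ whose induced ultrapower is well-founded. The required $V$-normal $V$-ultrafilter on the critical point $\delta\le\kappa$ of the associated embedding is then obtained in the usual way, exhibiting $\delta$ as generically measurable witnessed by $P$.

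Let $P$ be the poset whose conditions are partial plays of $\UU(\kappa,{\le}\gamma)$ of length strictly less than $\gamma$ in which \emph{Choose} has followed $\sigma$, ordered by end-extension. Since $\gamma$ is regular, the union of any decreasing chain in $P$ of length $<\gamma$ is again a partial play of length $<\gamma$; taking intersections of \emph{Choose}'s picks at limit stages yields a lower bound, legal precisely because $\sigma$ is a winning strategy and we are still below $\gamma$. Hence $P$ is ${<}\gamma$-closed, with closure being vacuous when $\gamma=\omega$. Fix a $P$-generic $G$ over $V$ and let $\langle c_\beta \mid \beta<\gamma\rangle \in V[G]$ denote \emph{Choose}'s sequence of picks along the full play coded by $G$.

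In $V[G]$ define
\[
  U \;:=\; \{\, X \in \pow(\kappa)^V \mid \exists\,\beta<\gamma,\ c_\beta \subseteq X\,\}.
\]
Closure under $V$-supersets and finite intersections is immediate from $c_{\max(\alpha,\beta)} \subseteq c_\alpha \cap c_\beta$. For ultraness over $V$, observe that for any $X\in\pow(\kappa)^V$ the set of conditions whose last pick is contained in $X$ or in $\kappa\setminus X$ is dense in $P$: \emph{Cut} can play the partition $\langle c\cap X, c\setminus X\rangle$ of \emph{Choose}'s current set $c$, and whichever side $\sigma$ returns lies in one of the two. Since $\sigma$ is winning, \emph{Choose} never picks a singleton, so every $c_\beta$ has at least two elements and $U$ is nonprincipal.

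For well-foundedness, suppose for contradiction that $\langle f_n \mid n<\omega\rangle \in V[G]$ with $f_n\in V$, $f_n\colon\kappa\to V$, and $A_n := \{\alpha<\kappa \mid f_{n+1}(\alpha)\in f_n(\alpha)\} \in U$ for all $n$. For each $n$, pick $\beta_n<\gamma$ with $c_{\beta_n}\subseteq A_n$. Since $\sigma$ is winning, $\bigcap_{\beta<\gamma} c_\beta \neq \emptyset$; let $\alpha^*$ be any element of this intersection. Then $\alpha^* \in c_{\beta_n}\subseteq A_n$ for every $n$, yielding an infinite descending $\in$-chain $\langle f_n(\alpha^*) \mid n<\omega\rangle$ in $V$, contradicting foundation. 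Thus $U$ gives a well-founded ultrapower. By Fact~\ref{fact:crit}, the induced generic embedding $j\colon V\to M$ has critical point the $V$-completeness $\delta\le\kappa$ of $U$, and the derived ultrafilter $U^* := \{\, X \in \pow(\delta)^V \mid \delta \in j(X)\,\} \in V[G]$ is a $V$-normal $V$-ultrafilter on $\delta$ whose ultrapower is well-founded (it embeds into $M$ via $[f]_{U^*}\mapsto j(f)(\delta)$). Hence $\delta\le\kappa$ is generically measurable as witnessed by $P$. The main obstacle—the well-foundedness claim explicitly flagged as missing in the prior published treatment—reduces cleanly to foundation in $V$ by exploiting any single element of the nonempty final intersection; the rest is standard density and generic-ultrapower bookkeeping.
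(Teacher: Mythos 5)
Your setup (forcing directly with the tree of partial plays consistent with $\sigma$, rather than first collapsing $2^\kappa$ to $\gamma$ as the paper does) is a legitimate variant, and your density arguments for ultraness and nonprincipality of $U$ are fine. But the well-foundedness step contains a fatal error, and it is precisely the error that the paper goes out of its way to flag and repair. You assert that ``since $\sigma$ is winning, $\bigcap_{\beta<\gamma}c_\beta\neq\emptyset$.'' The statement that $\sigma$ is a winning strategy is a statement about runs of the game that lie in $V$; the generic run $\langle c_\beta\mid\beta<\gamma\rangle$ lies only in $V[G]$, so the winning property of $\sigma$ does not apply to it. Worse, the final intersection of the generic run is provably \emph{empty}: for each $\xi<\kappa$ the set of conditions whose last pick omits $\xi$ is dense (extend any condition by having \emph{Cut} play $\langle\{\xi\},c\setminus\{\xi\}\rangle$; since $\sigma$ never picks a singleton, it must return $c\setminus\{\xi\}$). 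This is in fact forced by your own nonprincipality argument: if some $\alpha^*$ lay in every $c_\beta$, then $\{\alpha^*\}$ would be in $U$, contradicting nonprincipality. So there is no $\alpha^*$ to evaluate the $f_n$ at, and the ``clean reduction to foundation in $V$'' collapses.

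The gap is repairable when $\gamma>\omega$: there $\sup_n\beta_n<\gamma$ by regularity, the initial segment of the run up to that stage lies in $V$ by ${<}\gamma$-closure of $P$, and a partial play in $V$ consistent with the winning strategy must have nonempty intersection of choices; equivalently, $U$ is $V$-${<}\omega_1$-complete and the forcing is $\sigma$-closed, so well-foundedness is automatic. But in the crucial case $\gamma=\omega$ the $\beta_n$ may be cofinal in $\omega$ and no such repair exists. This is exactly why the paper's Claim~\ref{wellfoundedultrapower} reduces to $\gamma=\omega$ and then argues via a tree $T\in V$ of finite approximations to pairs (partial run consistent with $\sigma$, descending sequence of representing functions): ill-foundedness yields a branch of $T$ in $V[G]$, absoluteness of well-foundedness yields a branch in $V$, and that branch is a genuine run \emph{in $V$}, to which the winning property of $\sigma$ legitimately applies, producing the contradiction. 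You need to incorporate an argument of this kind; as written, your proof establishes only that $U$ is a nonprincipal $V$-ultrafilter, not that its ultrapower is well-founded. (A minor further point: to get a single generically measurable cardinal one should pass below a condition deciding the value of $\crit j$, or invoke homogeneity as the paper does.)
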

\begin{proof}
  Let us generically L{\'e}vy collapse $2^\kappa$ to become of size $\gamma$, by the ${<}\gamma$-closed notion of forcing $\Coll(\gamma,2^\kappa)$. In the generic extension, we perform a run of the game $\UU(\kappa,{\le}\gamma)$ with \emph{Choose} following their ground model winning strategy $\sigma$, and with the moves of \emph{Cut} following an enumeration of $\Pow(\kappa)^V$ in order-type $\gamma$. More precisely, let $\langle x_\beta\mid\beta<\gamma\rangle$ be an enumeration of $\Pow(\kappa)^V$ in our generic extension. 
  At any stage $\beta<\gamma$, assume that $\langle D_\alpha\mid\alpha<\beta\rangle$ denotes the sequence of choices of \emph{Choose} so far, and let $D_\beta'=\bigcap_{\alpha<\beta}D_\alpha$. Let \emph{Cut} play the partition $C_\beta=\langle D_\beta'\cap x_\beta,D_\beta'\setminus x_\beta\rangle$ at stage $\beta$, and let $D_\beta$ denote the response of \emph{Choose}. Note that since the L{\'e}vy collapse is ${<}\gamma$-closed, any proper initial segment of this run is in the ground model~$V$, and therefore it is possible for \emph{Choose} to apply their strategy $\sigma$ in each step. Having finished the above run of $\UU(\kappa,{\le}\gamma)$, let $U$ be the collection of all $x_\beta$'s such that $D_\beta=D_\beta'\cap x_\beta$. Equivalently, for any $x\subseteq\kappa$, $x\in U$ if and only if $D_\beta\subseteq x$ for all sufficiently large $\beta<\gamma$.
  \begin{claim}
     $U$ is a $V$-${<}\gamma$-complete, nonprincipal ultrafilter on $\Pow(\kappa)^V$.
  \end{claim}
  \begin{proof}
    It is easy to check that $U$ is an ultrafilter on $\Pow(\kappa)^V$.
    
    Let us check that $U$ is $V$-${<}\gamma$-complete. If $\delta<\gamma$, and $\langle A_i\mid i<\delta\rangle\in V$ is a sequence of elements of $U$, assume for a contradiction that $\bigcap_{i<\delta}A_i\not\in U$. Using the regularity of $\gamma$ and our above characterization of $U$, we thus find an ordinal $\eta<\gamma$ so that the intersection of choices of \emph{Choose} up to stage $\eta$ would be $\emptyset$, contradicting that \emph{Choose} follows their winning strategy $\sigma$.
    
    In order to show non-principality of $U$, note that for any $\xi<\kappa$, some $x_\beta$ is equal to $\{\xi\}$, hence $C_\beta=\langle\{\xi\},B_\beta'\setminus\{\xi\}\rangle$, and $D_\beta=B_\beta'\setminus\{\xi\}$ since $\sigma$ is a winning strategy, and therefore $\{\xi\}\not\in U$.
%
  \end{proof}
  
  \begin{claim}\label{wellfoundedultrapower}
    The generic ultrapower of $V$ by $U$ is well-founded.
  \end{claim}
  \begin{proof}
    Assume for a contradiction that this is not the case. We may thus assume that $\gamma=\omega$, for otherwise $U$ is ${<}\omega_1$-complete in a $\sigma$-closed forcing extension of the universe $V$ and therefore yields a well-founded ultrapower of $V$. Let $T$ be the tree of tuples of the form $\langle \vec f,\vec n,t\rangle$ with the following properties:
    \begin{enumerate}
      \item $t=\langle\langle A_i,B_i\rangle\mid i<k\rangle$ is a partial run of the game $\UU(\kappa,{\le}\omega)$ of length $k<\omega$ that is consistent with $\sigma$, where $A_i$ denotes the partition played by \emph{Cut}, and $B_i$ denotes the choice of \emph{Choose} at stage $i$ for every $i<k$,
      \item $\vec n=\langle n_j\mid j<l\rangle$ is a strictly increasing sequence of natural numbers for some $l\le k$, and if $l>0$ then $n_{l-1}=k-1$,
      \item $\vec f=\langle f_j\mid j<l\rangle$ is such that $f_j\colon B_{n_j}\to\Ord$ for each $j<l$, and
      \item $f_{j+1}(\alpha)<f_j(\alpha)$ for all $\alpha\in B_{n_{j+1}}$ whenever $j+1<l$.      
    \end{enumerate}
    The ordering relation on $T$ is componentwise extension of sequences, that is for $\langle\vec f,\vec n,t\rangle$ and $\langle\vec f',\vec n',t'\rangle$ both in $T$, we have $\langle\vec f,\vec n,t\rangle<_T\langle\vec f',\vec n',t'\rangle$ if $\vec f'$ extends $\vec f$, $\vec n'$ extends $\vec n$, and $t'$ extends $t$ as a sequence.
    
    \begin{subclaim}
      $T$ has a branch in $V[G]$.
    \end{subclaim}
    \begin{proof}
      Using our assumption of ill-foundedness, pick a decreasing $\omega$-sequence of ordinals in the generic ultrapower of $V$ by $U$, which are represented by functions $g_i\colon\kappa\to\Ord$ in $V$. For $i<\omega$, let $U_i=\{\alpha<\kappa\mid g_{i+1}(\alpha)<g_i(\alpha)\}$. Consider the run $\langle\langle A_i,B_i\rangle\mid i<\omega\rangle$ of the game $\UU(\kappa,{\le}\omega)$ in $V[G]$, in which \emph{Choose} plays according to $\sigma$, and in which \emph{Cut} plays based on the enumeration $\langle x_i\mid i<\omega\rangle$ of $\Pow(\kappa)^V$. We define sequences $\langle n_j\mid j<\omega\rangle$ and $\langle f_j\mid j<\omega\rangle$ inductively. Let $n_0=0$ and let $f_0=g_0\restr B_0$. Given $n_j$ and $f_j$, let $n_{j+1}$ be least above $n_j$ such that $B_{n_{j+1}}\subseteq U_j$ -- note that such $n_{j+1}$ must exist for $U_j\in U$. Let $f_{j+1}=g_{j+1}\restr B_{n_{j+1}}$. It is now straightforward to check that the sequence \[\left\langle\langle\langle f_j\mid j<l\rangle,\langle n_j\mid j<l\rangle,\langle\langle A_i,B_i\rangle\mid i\le n_{l-1}\rangle\rangle\mid l<\omega\right\rangle\] is a branch through $T$ in $V[G]$.
  \end{proof}
    By the absoluteness of well-foundedness, $T$ thus has a branch in $V$. Such a branch yields a run $\langle\langle A_i,B_i\rangle\mid i<\omega\rangle$ of the game $\UU(\kappa,{\le}\omega)$ in which \emph{Choose} follows their winning strategy, hence there is $\beta\in\bigcap_{n\in\omega}B_n\ne\emptyset$. This branch also yields a strictly increasing sequence $\langle n_i\mid i<\omega\rangle$ of natural numbers, and a sequence of functions $\langle f_i\mid i<\omega\rangle$ so that for each $i<\omega$, $f_i\colon B_{n_i}\to\Ord$, and $f_{i+1}(\alpha)<f_i(\alpha)$ whenever $\alpha\in B_{n_{i+1}}$. But then, our choice of $\beta$ yields a decreasing $\omega$-sequence $\langle f_{n_i}(\beta)\mid i<\omega\rangle$ of ordinals, which is a contradiction, as desired.
  \end{proof}
  By Fact \ref{fact:crit}, it follows that $\gamma\le\crit j\le\kappa$, and hence by the weak homogeneity of the L{\'e}vy collapse, it follows that $\crit j\le\kappa$ is generically measurable, as witnessed by ${<}\gamma$-closed forcing.\footnote{Note that since $\omega$ can never be generically measurable, this implies that in particular that if $\gamma=\omega$, $U$ will still be $V$-${<}\omega_1$-complete.}
\end{proof}

In particular thus, using standard results from inner model theory, it follows from Theorem \ref{theorem:ulamstrength} that if \emph{Choose} has a winning strategy in the game $\UU(\kappa,{\le}\omega)$, then there is an inner model with a measurable cardinal, for the existence of such an inner model follows from having a generically measurable cardinal. 
In more detail, suppose there is an elementary embedding $j\colon V\rightarrow W$ in some generic extension $V[G]$ of $V$. 
Furthermore, we may assume that there is no inner model with a measurable cardinal of order $1$.\footnote{The argument could be done similarly for other core models.} 
So the canonical least iterable structure $0^{\ddag}$ with a sharp for a measure of order $1$ does not exist (see \cite[Section 6.5]{zeman}). 
Then $0^{\ddag}$ also does not exist in $V[G]$ \cite[Lemma 6.5.6]{zeman}. 
Therefore, the core model $K$ for measures of order $0$ can be constructed in $V$ and $V[G]$ (see \cite[Section 7.3]{zeman}) and $K^V=K^{V[G]}$ by generic absoluteness of $K$ \cite[Theorem 7.4.11]{zeman}.  
In $V[G]$, the restriction $j{\upharpoonright}K$ is an elementary embedding from $K$ to a transitive class. 
Every such embedding comes from a simple iteration \cite[Theorem 7.4.8]{zeman}, i.e., there are no truncations of iterates of $K$ (see \cite[Section 4.2]{zeman}). 
Hence $K$ has a measurable cardinal. 

Note that the analogue of Remark \ref{remark:interesting} clearly applies to games of the form $\UU(\kappa,{\le}\gamma)$ as well. As another corollary of Theorem \ref{theorem:ulamstrength}, we can show that starting from a measurable cardinal, it can consistently be the case that a measurable cardinal $\kappa$ is least so that \emph{Choose} wins $\UU(\kappa,{\le}\omega)$. The same holds for $\UU(\kappa,\omega)$.

\begin{observation}\label{observation:measurableleast}
  Starting from a measurable cardinal $\kappa$, there is a model of set theory in which $\kappa$ is measurable, so that \emph{Choose} has a winning strategy in the game $\UU(\kappa,\gamma)$ whenever $\gamma<\kappa$, however \emph{Choose} doesn't have a winning strategy in the game $\UU(\lambda,\omega)$ for any $\lambda<\kappa$.
\end{observation}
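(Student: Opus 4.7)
The plan is to take the desired model to be $W := L[U]^V$, the canonical inner model built from a normal measure $U$ on $\kappa$ in $V$. Standard facts about $L[U]$ give that $W$ is a model of $\ZFC$, that $\kappa$ is measurable in $W$ (with measure $U \cap W \in W$), and crucially that $\kappa$ is the \emph{unique} measurable cardinal of $W$. Since $\kappa$ is measurable in $W$, Observation \ref{observation:measurable} applied inside $W$ immediately yields that Choose has a winning strategy in $\UU(\kappa,\gamma)$ for every $\gamma < \kappa$, establishing the first clause.

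For the second clause I would argue by contradiction: suppose that in $W$, for some $\lambda < \kappa$, Choose has a winning strategy in $\UU(\lambda,\omega)$. Because the winning condition of $\UU(\lambda,\omega)$ requires the final intersection of Choose's picks to have at least two elements, each pick made by such a strategy must itself have size $\ge 2$, so the same strategy also witnesses that Choose wins $\UU(\lambda,{\le}\omega)$. Applying Theorem \ref{theorem:ulamstrength} inside $W$ then yields a cardinal $\mu \le \lambda$ that is generically measurable in $W$.

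The main obstacle is to derive a contradiction from this generic measurability below $\kappa$. For this I would invoke the core model argument sketched in the discussion following Theorem \ref{theorem:ulamstrength}, now applied inside $W$: since $W = L[U]$ contains a unique measurable cardinal, it does not contain $0^\ddag$, so the Dodd-Jensen core model $K^W$ can be constructed in $W$, and the generic embedding with critical point $\mu$ restricts to an iteration of $K^W$, forcing $K^W$ to carry a measurable cardinal ${\le}\lambda$. The delicate fact to be quoted from standard inner model theory is that $K^W = W$, since $L[U]$ is itself the canonical iterable premouse at this level; hence the unique measurable of $K^W$ is $\kappa > \lambda$, the desired contradiction.
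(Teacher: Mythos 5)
Your proposal is correct in outline and coincides with the paper's proof for its first half: both work in $L[U]$, obtain the first clause from measurability, and, assuming \emph{Choose} wins $\UU(\lambda,\omega)$ for some $\lambda<\kappa$, apply Theorem \ref{theorem:ulamstrength} to get a generically measurable cardinal ${\le}\lambda$ and then pass to the core model. (Your observation that a winning strategy for \emph{Choose} in $\UU(\lambda,\omega)$ is automatically one for $\UU(\lambda,{\le}\omega)$ --- every choice contains the final intersection and so is never a singleton --- usefully makes explicit a step the paper leaves implicit, since Theorem \ref{theorem:ulamstrength} is stated for the ${\le}\gamma$-games.) Where you diverge is the endgame. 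The paper extracts from the core model discussion only that some $\nu\le\lambda$ is measurable in \emph{some} inner model of $L[U]$, fixes a normal measure $u$ on $\nu$ there, and derives the contradiction from Kunen's theorem that $L[U]$ is an iterated ultrapower of $L[u]$, so that $u\notin \mathrm{Ult}(L[u],u)\supseteq L[U]$ while $u\in L[U]$. You instead want to identify the core model of $W=L[U]$ with $W$ itself and contradict the uniqueness of its measurable. That identification is indeed standard, but only for the core model the paper's discussion actually invokes, namely Zeman's core model for measures of order $0$, whose sequence is allowed to carry full measures (so $U$ itself goes onto the sequence and $L[U]\models V=K$); it fails for the Dodd--Jensen core model as you name it, which carries no measures, is a proper inner model of $L[U]$ not containing $U$, has no measurable cardinals, and does not satisfy the ``every elementary embedding is an iteration'' theorem your argument needs. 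With the terminology corrected, your route is a legitimate alternative that trades Kunen's $L[u]$-iteration theorem for the equally classical fact that $K^{L[U]}=L[U]$; the paper's route has the advantage of needing nothing about $K$ beyond the existence of a measurable cardinal in it.
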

\begin{proof}
  Let $U$ be a normal measurable ultrafilter on $\kappa$ and work in $L[U]$. \emph{Choose} has a winning strategy in the game $\UU(\kappa,\gamma)$ whenever $\gamma<\kappa$. Assume for a contradiction that there were some $\lambda<\kappa$ for which \emph{Choose} had a winning strategy in the game $\UU(\lambda,\omega)$. By Theorem \ref{theorem:ulamstrength}, there is a generically measurable cardinal $\nu\le\lambda$. But then, by standard inner model theory results (see the discussion before this observation), $\nu<\kappa$ would be measurable in some inner model of our universe $L[U]$. Let $u$ be a normal measurable ultrafilter on $\nu$ in that model, and consider the model $L[u]$. By classical results of Kunen (see \cite[Theorem 20.12]{MR1994835}), $L[U]$ can be obtained by iterating the measure $u$ over the model $L[u]$. But then, $u\in L[U]$ contradicts the fact that the ultrafilter $u$ could not be an element of its induced ultrapower of $L[u]$ (see \cite[Proposition 5.7(e)]{MR1994835}).
\end{proof}

%

Next, we show that we can obtain a weak version of Observation \ref{observation:cutwinsulam} for games of the form $\UU(\kappa,{\le}\gamma)$. Together with Observation \ref{cut does not win the weak Ulam game for large kappa}, this shows in particular that $\UU(\kappa,{\le}\gamma)$ is not determined when $2^{<\gamma}<\kappa\leq 2^\gamma$.

\begin{theorem}
If $\gamma\le\kappa$ is regular and $\kappa\leq 2^\gamma$, then \emph{Choose} does not have a winning strategy in the game $\UU(\kappa,{\le}\gamma)$. 
\end{theorem}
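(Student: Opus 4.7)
I will assume, for contradiction, that $\tau$ is a winning strategy for Choose in the game $\UU(\kappa,{\le}\gamma)$. Using the hypothesis $\kappa\le 2^\gamma$, I identify $\kappa$ with a subset $Y\subseteq {}^\gamma 2$ of cardinality $\kappa$, so that the game is effectively played on $Y$.

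My first move is to have Cut play the bit-splitting strategy modelled on the second half of Observation~\ref{cut does not win the weak Ulam game for large kappa}: at stage $\xi$, Cut partitions the current set $D_\xi\subseteq Y$ into the two pieces $\{y\in D_\xi : y(\xi)=0\}$ and $\{y\in D_\xi : y(\xi)=1\}$. Since $\tau$ is assumed winning, Choose never picks a singleton, so the game lasts all $\gamma$ stages and Choose's successive bit choices assemble a function $F\in {}^\gamma 2$ with final intersection equal to $\{F\}\cap Y$. If $F\notin Y$ this intersection is empty, contradicting that $\tau$ wins, and Cut already wins in this case.

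The case left to handle is $F\in Y$, which is where I expect the main obstacle. Here the plan is to exploit that $\tau\in V$ to precompute $F$ in $V$ by simulating the bit-splitting play against $\tau$, and then to have Cut deviate from pure bit-splitting so as to exclude $F$: for example, at some stage $\xi_0$ Cut plays $\langle\{F\},D_{\xi_0}\setminus\{F\}\rangle$, forcing $\tau$ to pick $D_{\xi_0}\setminus\{F\}$ in order to avoid the singleton. Cut then continues with bit-splitting under a rearranged indexing of coordinates so that all $\gamma$ coordinates are ultimately fixed over the remaining stages, which is possible since $\gamma$ is regular.

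The subtlety I expect to be the core technical difficulty is that this perturbation of the play changes $\tau$'s subsequent responses, so the candidate function produced in the modified game may be a different $F'\in Y\setminus\{F\}$ keeping the final intersection nonempty. To address this I would invoke a transfinite iteration of the above modification: at successive stages, Cut precomputes the candidate of the current shifted subgame and excludes it, interleaving these exclusions with bit-splitting moves. The regularity of $\gamma$ is then used to fit $\gamma$-many exclusions together with $\gamma$-many bit-splits into the $\gamma$ available stages, so that any potential element of the final intersection is both uniquely pinned down by the bit-splits and removed by one of the precomputed exclusions, forcing the intersection to be empty and yielding the desired contradiction.
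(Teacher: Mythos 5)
Your opening moves match the paper's own: bit-splitting at even stages pins the final intersection down to at most one point $F$, and splitting off a precomputed singleton forces Choose off that point. The gap is in the ``transfinite iteration of exclusions'' that is supposed to finish the argument. At stage $\alpha$ you exclude the candidate $F_\alpha$ of the current shifted subgame, i.e.\ the limit that would arise if Cut played only bit-splits from stage $\alpha$ onward; but the actual play contains further exclusions after stage $\alpha$, each of which perturbs Choose's subsequent bit choices. The function $F$ actually assembled at the end of the real play is therefore only the limit of the $F_\alpha$'s in the product topology, and nothing forces $F$ to equal any $F_\alpha$; it can perfectly well be a new point of $Y$ that was never excluded, in which case Choose wins this run. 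A counting argument cannot close this gap: the paper shows (this is the content of its key claim about the quasistrategy) that the set of possible limits arising from plays against a winning strategy $\sigma$ contains a continuous injective image of ${}^\gamma 2$, hence has size $2^\gamma$, while Cut has only $\gamma$ exclusion opportunities. Note also that your argument uses nothing about $Y$ beyond $|Y|=\kappa$; if it worked it would apply to $Y={}^\gamma 2$ itself when $\kappa=2^\gamma$, where the candidate set genuinely is a perfect set and no sequence of $\gamma$ point-exclusions can empty it.

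The paper avoids this by a global choice made before the game starts: it fixes $X\subseteq{}^\gamma 2$ of size $\kappa$ containing no continuous and injective image of ${}^\gamma 2$ (any $X$ works if $\kappa<2^\gamma$; a recursion of length $\kappa$ produces one if $\kappa=2^\gamma$), replaces the game on $\kappa$ by the equivalent game on $X$, and then shows that a winning strategy for Choose, played against the interleaved bit-split/singleton-split quasistrategy, yields a full binary tree of partial plays with pairwise disjoint choices on each level, whose branches induce a continuous injective map ${}^\gamma 2\to X$ --- contradicting the choice of $X$. This choice of the underlying copy of $\kappa$ inside ${}^\gamma 2$, rather than any stage-by-stage exclusion scheme, is the missing ingredient in your proposal.
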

\begin{proof}
  Fix $X\subseteq{}^\gamma 2$ of size $\kappa$ that does not contain a continuous and injective image of ${}^\gamma 2$.\footnote{If $\kappa<2^\gamma$, then any $X\subseteq{}^\gamma 2$ of size $\kappa$ works. If $\kappa=2^\gamma$, then a set $X$ of size $\kappa$ which does not contain a continuous and injective image of ${}^\gamma 2$ can easily be constructed by a recursion of length $\kappa$.}    
  Let $\UU(X,{\le}\gamma)$ be the variant of $\UU(\kappa,{\le}\gamma)$ where we play on the underlying set $X$ rather than $\kappa$. Noting that these two games are equivalent, assume for a contradiction that \emph{Choose} had a winning strategy $\sigma$ for the game $\UU(X,{\le}\gamma)$. 
We consider the following quasistrategy $\tau$ for \emph{Cut}:\footnote{Unlike a strategy, which provides unique response moves, a quasistrategy provides a (nonempty) set of possible response moves (for one particular player) at each round of a game. 
It will be relevant for Claim \ref{claim choose does not win U leqgamma} below that \emph{Cut} is still free to pick the $x_i$ in odd rounds following the quasistrategy $\tau$.} 
\begin{itemize} 
\item 
In each even round $2i$, given a set $A\subseteq{}^\gamma 2$, \emph{Cut} splits it into the sets $A_0=\{x\in A\mid x(i)=0\}$ and $A_1=\{x\in A\mid x(i)=1\}$.
\item 
In each odd round $2i+1$, \emph{Cut} \emph{splits off} some singleton $\{x_i\}$, i.e., presents a partition of the form $\langle\{x_i\},X_i\rangle$. 
\end{itemize} 

Note that if \emph{Choose} wins a run of the game $\UU(X,{\le}\gamma)$ in which \emph{Cut} plays according to their quasistrategy $\tau$, then by the definition of $\tau$ at even stages, if $x\in{}^\gamma 2$ is in the intersection of choices made by \emph{Choose} in such a run, $x(i)$ has been fixed for every $i<\gamma$, that is the intersection of these choices will only have a single element. 

\begin{claim} 
\label{claim choose does not win U leqgamma} 
Suppose that $t$ is a partial play of $\UU(X,{\le}\gamma)$ of length less than $\gamma$ according to both $\sigma$ and $\tau$. Then, there are partial plays $t_0$, $t_1$ of successor length, both extending $t$ and according to both $\sigma$ and $\tau$, such that the final choices of \emph{Choose} in $t_0$ and $t_1$ are disjoint. 
\end{claim} 
\begin{proof} 
  If not, take an arbitrary run of $\UU(X,{\le}\gamma)$ extending $t$ and according to both $\sigma$ and $\tau$, such that only $x$ is in the intersection of choices along this run. 
  Now consider a different run that starts with $t$ as well, however in which \emph{Cut} splits off $\{x\}$ at the next odd stage.  If \emph{Choose} made all the same 0/1-choices at even stages in this run as before, then the intersection of their choices would now be empty, contradicting that $\sigma$ is a winning strategy for \emph{Choose}. This means that at some stage in those two runs, the respective choices of \emph{Choose} according to $\sigma$ have to be disjoint, and we may pick $t_0$ and $t_1$ to be suitable initial segments of these runs.
\end{proof} 

Using the above claim, since $\sigma$ is a winning strategy for \emph{Choose} and $\gamma$ is regular, we can construct a full binary tree $T$ of height $\gamma$ of partial plays $t$ 
such that partial plays on the same level of $T$ have the same length, and such that the final choices made by \emph{Choose} in any two such partial plays of successor length which are on the same level of $T$ will be disjoint. Let $\pi$ be an order-preserving isomorphism from ${}^{<\gamma}2\to T$, and for $a\in{}^\gamma 2$, let $\pi(a)=\bigcup\{\pi(a\restr\alpha)\mid\alpha<\gamma\}$. Since $\sigma$ is a winning strategy for \emph{Choose}, the intersection of choices from any run of $\UU(X,{\le}\gamma)$ is nonempty, and thus, using the way the quasistrategy $\tau$ was defined at even stages, this yields a continuous and injective map $f\colon {}^\gamma 2\rightarrow X$, letting $f(a)=x$ whenever $a\in{}^\gamma 2$, $b=\pi(a)$ is a branch through $T$, and $x$ is the unique element of the intersection of choices of \emph{Choose} in the run $\bigcup b$. This shows that $X$ contains a continuous and injective image of ${}^\gamma 2$, contradicting our choice of $X$. 
\end{proof}

\section{Ideal cut and choose games}\label{section:ideal}

We want to introduce a larger class of generalized cut and choose games on regular and uncountable cardinals $\kappa$, in which the winning condition is dictated by a monotone family $I$ on $\kappa$, that is a family of subsets of $\kappa$ that is closed under subsets, which in many cases will be a ${<}\kappa$-complete ideal $I\supseteq\bd_\kappa$. Before we introduce this class, let us observe that the games that we considered so far proceeded as progressions of cuts and choices, so that the chosen pieces would then be further cut into pieces. Equivalently however, we could require \emph{Cut} to repeatedly cut the starting set $\kappa$ of these games into pieces and \emph{Choose} to pick one of those pieces, in each of their moves, simply because we only evaluate intersections of choices in order to determine who wins a run of a game, so whatever happens outside of the intersection of choices made in a run of any of our games up to same stage is irrelevant for this evaluation (and every partition of $\kappa$ canonically induces a partition of any of its subsets $X$, plus every partition of some $X\subseteq\kappa$ can be \emph{extended} to a partition of $\kappa$, for example by adding all of $\kappa\setminus X$ to one of its parts). Our generalized cut and choose games will be based on the idea of \emph{Cut} repeatedly partitioning the starting set of our cut and choose games. Fix a regular uncountable cardinal $\kappa$ and a family $I$ of subsets of $\kappa$ that is monotone, i.e.\ closed under subsets, throughout this section. Let $I^+$ denote the collection of all subsets of $\kappa$ which are not elements of $I$ (\emph{$I$-positive}).

\begin{definition}\label{definition:idealulam}
  Let $X\in I^+$, and let $\gamma<\kappa$ be a limit ordinal. Let $\UU(X,I,\gamma)$ denote the following game of length $\gamma$. Starting with the set $X$, two players, \emph{Cut} and \emph{Choose}, take turns to make moves as follows. In each move, \emph{Cut} divides the set $X$ into two pieces, and then \emph{Choose} answers by picking one of them. \emph{Choose} wins in case the final intersection of their choices is $I$-positive, and \emph{Cut} wins otherwise.
  
 $\UU(X,I,{\le}\gamma)$ denotes the variant of the above game which \emph{Choose} wins just in case the intersection of their choices is $I$-positive up to all stages $\delta<\gamma$, and nonempty at the final stage $\gamma$, and \emph{Cut} wins otherwise.
  
  Let us also introduce the variant $\UU(X,I,{<}\gamma)$ for $\gamma\le\kappa$ of the above game: It proceeds in the same way for $\gamma$-many moves, however \emph{Choose} already wins in case for all $\delta<\gamma$, the intersection of their first $\delta$-many choices is $I$-positive, and \emph{Cut} wins otherwise.
\end{definition}

Note that for the games defined above, we could let them end immediately (with a win for \emph{Cut}) in case at any stage $\delta<\gamma$, the intersection of choices of \emph{Choose} up to that stage is in $I$.
Note also that if $I\subseteq J$ are monotone families on $\kappa$, $X\in J^+$, and \emph{Choose} has a winning strategy in the game $\UU(X,J,\gamma)$ for some limit ordinal $\gamma$, then they clearly also have a winning strategy in the game $\UU(X,I,\gamma)$. Moreover, if $S$ denotes the monotone family $\{\emptyset\}\cup\{\{\alpha\}\mid\alpha<\kappa\}$, then $\UU(\kappa,{\le}\gamma)$ corresponds to $\UU(\kappa,S,{\le}\gamma)$ and $\UU(\kappa,\gamma)$ corresponds to $\UU(\kappa,S,\gamma)$. We have thus in fact generalized the basic cut and choose games from our earlier sections.

\medskip

Let us start with some minor extensions of observations from Section \ref{section:ulam}. We refer to a non-principal ${<}\kappa$-complete ultrafilter on a measurable cardinal $\kappa$ as a \emph{measurable ultrafilter on $\kappa$}.

\begin{observation}\label{observation:measurable}
  If $\kappa$ is measurable, and $I$ is contained in the complement of some measurable ultrafilter $U$ on $\kappa$, then \emph{Choose} wins $\UU(X,I,{<}\kappa)$ whenever $X\in U$.
\end{observation}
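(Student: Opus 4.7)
The plan is to let \emph{Choose} follow the ultrafilter $U$ and use $<\kappa$-completeness to conclude that all partial intersections are $I$-positive. First I would note that the hypothesis $I\subseteq\pow(\kappa)\setminus U$ says precisely that $U\cap I=\emptyset$, i.e., every set in $U$ is automatically $I$-positive. Since $X\in U$ and $U$ is an ultrafilter on $\kappa$, whenever \emph{Cut} partitions $X$ in round $\alpha$ as $X=A_\alpha^0\sqcup A_\alpha^1$, exactly one of the two pieces belongs to $U$; let \emph{Choose} pick that one and call it $B_\alpha$. This strategy is well-defined because $U$ is an ultrafilter on $\pow(\kappa)$ and the two pieces are disjoint subsets of $X\in U$.

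To see that the strategy is winning, fix any $\delta<\kappa$. Each $B_\alpha$ for $\alpha<\delta$ lies in $U$, and since $U$ is $<\kappa$-complete by virtue of being a measurable ultrafilter on $\kappa$, the intersection $\bigcap_{\alpha<\delta} B_\alpha$ remains in $U$. By the disjointness of $U$ and $I$, this intersection is not in $I$, i.e., it is $I$-positive, which is exactly the winning condition for \emph{Choose} in $\UU(X,I,{<}\kappa)$.

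I do not foresee any real obstacle; the argument is essentially a one-line consequence of the $<\kappa$-completeness of $U$ together with its ultrafilter property, with the role of $I$ completely absorbed by the hypothesis that $I$ and $U$ are disjoint. The only point worth flagging is that the game is formulated so that \emph{Cut} partitions the original set $X$ at every round (not the current intersection), but this is immaterial here since the pieces of any such partition of $X$ are still subsets of $X\in U$, so the ultrafilter dichotomy is available at every round.
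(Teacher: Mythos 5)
Your proposal is correct and is exactly the argument the paper intends: its proof consists of the single line that \emph{Choose} wins by picking their choices according to $U$, and your write-up simply makes explicit the ultrafilter dichotomy at each round, the ${<}\kappa$-completeness giving $\bigcap_{\alpha<\delta}B_\alpha\in U$ for each $\delta<\kappa$, and the disjointness $U\cap I=\emptyset$ turning membership in $U$ into $I$-positivity. No gaps.
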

\begin{proof}
  They simply win by picking their choices according to $U$.
\end{proof}

\begin{observation}\label{observation:stronglycompact}
  If $\kappa$ is $2^\kappa$-strongly compact, then \emph{Choose} wins $\UU(X,I,{<}\kappa)$ whenever $I\supseteq\bd_\kappa$ is a ${<}\kappa$-complete ideal on $\kappa$ and $X\in I^*=\{\kappa\setminus a\mid a\in I\}$.
\end{observation}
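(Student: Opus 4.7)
The plan is to reduce to Observation~\ref{observation:measurable} by producing a ${<}\kappa$-complete ultrafilter on $\kappa$ that extends the dual filter $I^*$. Since $I$ is a ${<}\kappa$-complete ideal on $\kappa$ extending $\bd_\kappa$, its dual $I^*$ is a ${<}\kappa$-complete nonprincipal filter on $\kappa$, and as a filter it is generated by at most $|P(\kappa)| = 2^\kappa$ subsets of $\kappa$. I would then invoke the standard equivalent characterization that $\kappa$ is $\lambda$-strongly compact exactly when every ${<}\kappa$-complete filter generated by at most $\lambda$-many sets extends to a ${<}\kappa$-complete ultrafilter on the same underlying set. Applied with $\lambda = 2^\kappa$, this yields a ${<}\kappa$-complete ultrafilter $U$ on $\kappa$ with $I^* \subseteq U$. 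By the ultrafilter property this forces $U \cap I = \emptyset$, so every member of $U$ is $I$-positive; in particular $X \in I^* \subseteq U$.

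With $U$ in hand, the winning strategy for \emph{Choose} is precisely the one used in Observation~\ref{observation:measurable}: in each round, respond to \emph{Cut}'s partition $\langle A, B\rangle$ by selecting the unique one of $A$, $B$ belonging to $U$. An easy induction on $\delta<\kappa$ shows that the intersection of \emph{Choose}'s first $\delta$ responses lies in $U$ --- at successor steps this is immediate from the ultrafilter axiom, and at limits $\delta<\kappa$ it follows from the ${<}\kappa$-completeness of $U$ applied to the $|\delta|$-many previous choices. Hence the running intersection remains $I$-positive at every stage, which is exactly the winning condition for $\UU(X,I,{<}\kappa)$.

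There is no serious technical obstacle here; the main point worth flagging is the role of $2^\kappa$ in the hypothesis. Although $I^*$ is a filter on a set of size only $\kappa$, it can require up to $2^\kappa$ generators, and a ${<}\kappa$-complete filter on $\kappa$ need not extend to a ${<}\kappa$-complete ultrafilter on $\kappa$ merely because $\kappa$ is measurable. The assumption of $2^\kappa$-strong compactness is calibrated precisely to produce such an extension uniformly for every ideal $I$ as in the hypothesis.
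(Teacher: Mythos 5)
Your proposal is correct and follows essentially the same route as the paper: the paper's proof uses $2^\kappa$-strong compactness to extend $I$ to a ${<}\kappa$-complete prime ideal, whose complement is a measurable ultrafilter containing $X$, and then invokes Observation~\ref{observation:measurable} — which is exactly the dual formulation of your extension of $I^*$ to a ${<}\kappa$-complete ultrafilter $U$. Your version just spells out the filter-extension characterization of strong compactness and the induction behind the winning strategy in more detail.
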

\begin{proof}
  The $2^\kappa$-strong compactness of $\kappa$ allows us to extend $I$ to a ${<}\kappa$-complete prime ideal, the complement of which thus is a measurable ultrafilter containing $X$ as an element. The result then follows from Observation \ref{observation:measurable}.
\end{proof}

We next present an observation on when \emph{Cut} wins generalized cut and choose games. This is in close correspondence to our earlier observations for games of length $\kappa$, but it also shows that \emph{Cut} not winning certain games of length $\kappa$ has large cardinal strength.\footnote{\label{footnote:WC}Item (4) below is related to the notion of a \emph{WC ideal} that was introduced by Chris Johnson~\cite{MR853844}. Johnson shows \cite[Corollary 2]{MR853844} that $\bd_\kappa$ is a WC ideal if and only if $\kappa$ is weakly compact. Using the concepts of distributivity that we will introduce in Section \ref{section:distributivity}, it is not hard to see that $I$ is a WC ideal if and only if \emph{Cut} does not win $\UU(\kappa,I,{<}\kappa)$. In fact, the following stronger statement also follows from results of Baumgartner and Johnson \cite[Paragraph after Corollary 4]{MR853844}: If $\kappa$ is weakly compact, then the weakly compact ideal $I$ on $\kappa$ is a WC ideal, and therefore \emph{Cut} does not win $\UU(\kappa,I,{<}\kappa)$.}

\begin{observation}\label{observation:cutwinsidealulam}
  Let $\gamma<\kappa$ be a limit ordinal, and let $I$ be a monotone family such that $\kappa$ cannot be written as a ${<}\kappa$-union of elements of $I$. Then, the following hold.
  \begin{enumerate}
    \item 
    \label{observation:cutwinsidealulam 1}
    \emph{Cut} wins $\UU(\kappa,I,\gamma)$ if and only if $\kappa\le 2^{|\gamma|}$.
    
    \item 
        \label{observation:cutwinsidealulam 2}
        \emph{Cut} wins $\UU(\kappa,I,{\le}\gamma)$ if and only if $\kappa\le 2^{<\gamma}$.
        
    \item 
        \label{observation:cutwinsidealulam 3}
    If $\kappa>2^{<\gamma}$, then \emph{Cut} does not win $\UU(\kappa,I,{<}\gamma)$.
    
    \item 
    \label{observation:cutwinsidealulam 4}
    If $I=\bd_\kappa$, then \emph{Cut} wins $\UU(\kappa,I,{<}\kappa)$ if and only if $\kappa$ is not weakly compact.
  \end{enumerate}
\end{observation}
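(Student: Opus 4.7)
The plan is to handle parts (1), (2), and (3) via a common tree-of-plays template generalizing Observations~\ref{observation:cutwinsulam} and~\ref{cut does not win the weak Ulam game for large kappa}. For (1) forward, I would read any \emph{Cut} winning strategy $\sigma$ as a complete labelled binary tree of partitions; each branch has length $\gamma$ and intersection in $I$, and every $\alpha < \kappa$ sits in exactly one such intersection, so $\kappa$ is a union of at most $2^{|\gamma|}$ sets in $I$. Under the hypothesis $\kappa > 2^{|\gamma|}$, this contradicts the assumed non-covering property of $I$. For (2) forward, along each $\alpha$'s branch there must be a first stage $\delta < \gamma$ at which the intersection becomes $I$-null (else $\alpha$ would witness a \emph{Choose} win at stage $\gamma$); these first-bad nodes form an antichain of size at most $2^{<\gamma}$ whose labels $I$-cover $\kappa$, giving $\kappa \le 2^{<\gamma}$. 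The same argument at height ${<}\gamma$ gives (3) directly, since assuming $\kappa > 2^{<\gamma}$ rules out the required covering. The backward directions of (1) and (2) use the embedding strategies from Observations~\ref{observation:cutwinsulam} and~\ref{cut does not win the weak Ulam game for large kappa} verbatim, using that singleton intersections lie in $I$ in the cases of interest (e.g.\ whenever $I \supseteq \bd_\kappa$, which is the setting of (4)).

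Part (4) is where new ideas are needed. For the forward direction I would fix any \emph{Cut} strategy $\sigma$ and form the unboundedness tree $T_\sigma = \{s \in {}^{<\kappa}2 : X_s \in \bd_\kappa^+\}$ ordered by initial segment. Inaccessibility bounds each level by $2^\delta < \kappa$. To show that each level $\delta<\kappa$ is nonempty, note that if every $X_s$ with $s \in {}^\delta 2$ were bounded, then $\sup X_s < \kappa$ for each $s$, and regularity of $\kappa$ together with $2^\delta < \kappa$ would give $\sup_{s} \sup X_s < \kappa$, bounding $\bigcup_s X_s$ and contradicting $\bigcup_s X_s = \kappa$. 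Hence $T_\sigma$ is a $\kappa$-tree, and the tree property (supplied by weak compactness together with inaccessibility) yields a cofinal branch $b \in {}^\kappa 2$; \emph{Choose} playing $b$ keeps the intersection unbounded at every stage and wins.

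For the backward direction of (4), I would split into the two ways $\kappa$ can fail weak compactness. If $\kappa$ is not inaccessible, some $\mu < \kappa$ satisfies $2^\mu \ge \kappa$, and the coordinate-splitting strategy of Observation~\ref{observation:cutwinsulam} (using an injection $\kappa \hookrightarrow {}^\mu 2$) produces a singleton---hence bounded---intersection by stage $\mu < \kappa$, so \emph{Cut} wins. The genuinely new case is $\kappa$ inaccessible carrying a $\kappa$-Aronszajn tree $T \subseteq {}^{<\kappa}2$ (downward closed, with $|T|=\kappa$). I would enumerate $T = \{t_\alpha : \alpha < \kappa\}$ bijectively, let $\ell_\alpha$ be the level of $t_\alpha$, extend each $t_\alpha$ by trailing zeros to $f_\alpha \colon \kappa \to 2$, and define \emph{Cut}'s strategy to split the current set at round $\delta$ by the value of $f_\alpha(\delta)$, so that after $\delta$ rounds the current node $s \in {}^\delta 2$ has label $X_s = \{\alpha : f_\alpha \restr \delta = s\}$. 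For any play $b \in {}^\kappa 2$ by \emph{Choose}, the Aronszajn property produces some $\delta < \kappa$ with $b \restr \delta \notin T$, and the main obstacle is to show that at such $\delta$ the set $X_{b \restr \delta}$ is bounded. The case $\ell_\alpha \ge \delta$ would force $f_\alpha \restr \delta = t_\alpha \restr \delta \in T$ to equal $b \restr \delta \notin T$, which is impossible. The case $\ell_\alpha < \delta$ forces $t_\alpha = b \restr \ell_\alpha$, uniquely determining $\alpha$ from $\ell_\alpha$, and also forces $b(i)=0$ for all $i \in [\ell_\alpha, \delta)$; this contributes at most one $\alpha$ per level $\ell<\delta$, giving $|X_{b \restr \delta}| \le \delta < \kappa$ and hence boundedness. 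Thus \emph{Cut} wins, completing (4).
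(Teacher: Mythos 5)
Your parts (1)–(3) follow the same tree-of-plays template as the paper (which proves (1) and (2) by declaring them analogous to Observations \ref{observation:cutwinsulam} and \ref{cut does not win the weak Ulam game for large kappa} and spells out only the first-bad-node/leaf-covering argument for (3)); your explicit caveat that the backward directions need singletons to lie in $I$ is a fair reading of the paper's implicit conventions. Part (4) is where you take a genuinely different and still correct route. For ``weakly compact $\Rightarrow$ \emph{Cut} does not win,'' the paper fixes a ${<}\kappa$-closed elementary submodel $M\prec H(\theta)$ containing $\sigma$ and lets \emph{Choose} answer according to a uniform ${<}\kappa$-complete $M$-ultrafilter; you instead form the unboundedness tree $T_\sigma$ of positions, use inaccessibility and regularity to see it is a $\kappa$-tree, and pull a cofinal branch through it with the tree property. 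For the converse, the paper argues abstractly that if \emph{Cut} has no winning strategy then $\kappa$ has the filter property of \cite{MR3800756} and is therefore weakly compact; you instead exhibit an explicit winning strategy for \emph{Cut}, either by coordinate-splitting below a witness to non-inaccessibility or from a branchless downward-closed binary tree, with a clean count showing $|X_{b\restr\delta}|\le|\delta|$ at the first $\delta$ where $b\restr\delta$ leaves the tree. Your version is more elementary and shows concretely how each player wins; the paper's ultrafilter/filter-property version is shorter and transfers verbatim to the variants with ${<}\kappa$-sized partitions in Observation \ref{observation:generalizedtrivialforcut}. The one step you should justify or cite is that an inaccessible, non-weakly-compact $\kappa$ carries a $\kappa$-Aronszajn tree that is a \emph{downward-closed subtree of ${}^{<\kappa}2$}, rather than merely some $\kappa$-Aronszajn tree. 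This is standard and, for inaccessible $\kappa$, is in fact equivalent to the failure of weak compactness: if every downward-closed height-$\kappa$ subtree of ${}^{<\kappa}2$ had a cofinal branch, then applying this to the tree of binary choice-patterns with unbounded intersection (your own level-counting argument shows it has height $\kappa$) yields the filter property for any $\kappa$-list of subsets of $\kappa$, hence weak compactness. As written, though, it is an unproved strengthening of the tree property and deserves a sentence.
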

\begin{proof}
  The proof of \eqref{observation:cutwinsidealulam 1} is analogous to the proof of Observation \ref{observation:cutwinsulam}, and the proof of~\eqref{observation:cutwinsidealulam 2} is analogous to the proof of Observation \ref{cut does not win the weak Ulam game for large kappa}, making use of the fact that $\kappa$ cannot be written as a ${<}\kappa$-union of elements of $I$ in the forward directions. For \eqref{observation:cutwinsidealulam 3}, assume for a contradiction that $\kappa>2^{<\gamma}$, however \emph{Cut} has a winning strategy $\sigma$ for the game $\UU(\kappa,I,{<}\gamma)$. $\sigma$ can be identified with a binary tree $T$ of height at most~$\gamma$, where the root of the tree is labelled with $\kappa$, and if a node of the tree is labelled with $y\in I^+$, then its immediate successor nodes are labelled with the sets from the partition that is the response of $\sigma$ to the sequence of cuts and choices leading up to the choice of $y$, and limit nodes are labelled with the intersection of the labels of their predecessors. If a node is labelled with a set in $I$, then it does not have any successors, and it means that \emph{Choose} has lost at such a point. $\sigma$ being a winning strategy means that $T$ has no branch of length $\gamma$. Thus, the union of all the labels of the leaves of $T$ has to be $\kappa$, which clearly contradicts our assumption on~$I$, for the number of leaves of $T$ is at most $2^{<\gamma}<\kappa$.  
  
  Regarding \eqref{observation:cutwinsidealulam 4}, assume first that $\kappa$ is weakly compact, however \emph{Cut} has a winning strategy $\sigma$ in the game $\UU(\kappa,I,{<}\kappa)$. Let $\theta$ be a sufficiently large regular cardinal, and let $M\supseteq(\kappa+1)$ be an elementary substructure of $H(\theta)$ of size $\kappa$ that is closed under ${<}\kappa$-sequences and with $\sigma\in M$. Using that $\kappa$ is weakly compact, let $U$ be a uniform ${<}\kappa$-complete $M$-ultrafilter on $\kappa$. Let us consider a run of the game $\UU(\kappa,I,{<}\kappa)$ in which \emph{Cut} follows their winning strategy $\sigma$, and \emph{Choose} responds according to $U$. This is possible for proper initial segments of such a run will be elements of $M$ by the ${<}\kappa$-closure of $M$, and hence can be used as input for $\sigma$ in $M$, yielding the individual moves of \emph{Cut} to be in $M$ as well. But since $U$ is uniform and ${<}\kappa$-complete, all choices of \emph{Choose} will be in $U$ and therefore $I$-positive. This means that \emph{Choose} wins against $\sigma$, which is our desired contradiction.
  
  In the other direction, assume that \emph{Cut} does not have a winning strategy in the game $\UU(\kappa,I,{<}\kappa)$. We verify that under the assumptions of our observation, $\kappa$ has the filter property (as in \cite[Definition 2.3]{MR3800756}) and is thus weakly compact. Let $\mathcal A=\langle A_i\mid i<\kappa\rangle$ be a collection of subsets of $\kappa$. We need to find a ${<}\kappa$-complete filter $\mathcal F$ on $\mathcal A$, that is, ${<}\kappa$-sized subsets of $\mathcal F$ need to have $\kappa$-sized intersections. At any stage $i<\kappa$, let \emph{Cut} play the partition $\langle A_i,\kappa\setminus A_i\rangle$ of $\kappa$. Since \emph{Cut} does not have a winning strategy in the game $\UU(\kappa,I,{<}\kappa)$, there is a sequence $\langle B_i\mid i<\kappa\rangle$ of choices of \emph{Choose} in such a run such that for every $\lambda<\kappa$, $\bigcap_{i<\lambda}B_i\in I^+$. But now we may clearly define our desired filter $\mathcal F$ by letting $A_i\in\mathcal F$ if $B_i=A_i$.
\end{proof}

%
%
%

Let us next observe that instead of measurability, it is sufficient for $\kappa$ to be generically measurable as witnessed by sufficiently closed forcing in order for \emph{Choose} to win cut and choose games at $\kappa$. This is a property that can be satisfied by small cardinals, and this thus shows that \emph{Choose} can win cut and choose games at small cardinals. It is well-known how to produce small cardinals that are generically measurable: For example, if $\kappa$ is measurable, as witnessed by some ultrapower embedding $j\colon V\to M$ with $\crit j=\kappa$, and for some nonzero $n<\omega$, $P$ denotes the L{\'e}vy collapse $\Coll(\aleph_{n-1},{<}\kappa)$ to make $\kappa$ become $\aleph_n$ in the generic extension, then in any $P$-generic extension $V[G]$ with $P$-generic filter $G$, $\kappa$ is generically measurable, as witnessed by the notion of forcing that is the L{\'e}vy collapse in the sense of $M[G]$ of all cardinals in the interval $[\kappa,j(\kappa))$ to become of size $\aleph_{n-1}$, which is a ${<}\aleph_{n-1}$-closed notion of forcing in $V[G]$. 
The proof for $\kappa=\aleph_1$ in \cite[Theorem 10.2]{MR2768691} works for the $\aleph_n$'s as well. 

Together with Theorem \ref{theorem:ulamstrength}, the next observation will also show that assumptions of the existence of winning strategies for \emph{Choose} in cut and choose games of increasing length form a hierarchy which is interleaved with assumptions of generic measurability, as witnessed by forcing notions with increasing closure properties. 

\begin{observation}\label{observation:genericallyulam}
  Assume that $\gamma\le\kappa$ is regular, and that $\kappa$ is generically measurable, as witnessed by some ${<}\gamma$-closed notion of forcing $P$.\footnote{The proof below can easily be adapted to the case when $P$ is only ${<}\gamma$-strategically closed.} Let $\dot U$ be a $P$-name for a uniform $V$-normal $V$-ultrafilter on $\kappa$, and let $I$ be the hopeless ideal with respect to $\dot U$, that is $I=\{Y\subseteq\kappa\mid 1\forces\check Y\not\in\dot U\}$. Then, $I\supseteq\bd_\kappa$ is a normal ideal on $\kappa$ and for any $X\in I^+$, \emph{Choose} wins $\UU(X,I,{<}\gamma)$.
\end{observation}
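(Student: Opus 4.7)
The plan is to verify the ideal-theoretic properties of $I$ first, and then to describe a strategy for \emph{Choose} that uses the forcing $P$ as an external bookkeeping device.

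For the ideal properties, that $\bd_\kappa \subseteq I$ is immediate from the uniformity of $\dot U$: any bounded set is forced out of $\dot U$. For normality, I would argue by contradiction: given a sequence $\langle A_\alpha \mid \alpha < \kappa\rangle \in V$ of sets in $I$, suppose some condition $q \in P$ forces the diagonal union $A = \nabla_\alpha A_\alpha$ into $\dot U$. Each $\kappa \setminus A_\alpha$ is forced into $\dot U$ (since $A_\alpha \in I$), so by the $V$-normality of $\dot U$ applied to this $V$-sequence, $q$ forces the diagonal intersection of the complements, which equals $\kappa \setminus A$, into $\dot U$, a contradiction.

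For the winning strategy, fix $X \in I^+$ and a condition $p \in P$ with $p \Vdash X \in \dot U$. \emph{Choose} will build alongside the play a decreasing sequence $\langle p_\alpha \mid \alpha < \gamma\rangle$ of conditions below $p$, maintaining the invariant that $p_\alpha$ forces $Y_\alpha \in \dot U$, where $Y_\alpha$ denotes the intersection of \emph{Choose}'s first $\alpha$-many responses. At a successor stage $\alpha+1$, faced with a partition $(A,B)$ of $X$ played by \emph{Cut}, the condition $p_\alpha$ forces $Y_\alpha \cap A$ and $Y_\alpha \cap B$ to partition a set in $\dot U$; since $\dot U$ is forced to be an ultrafilter, some extension $p_{\alpha+1} \leq p_\alpha$ decides which piece is in $\dot U$, and \emph{Choose} picks that piece. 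At a limit stage $\lambda < \gamma$, the ${<}\gamma$-closure of $P$ provides a lower bound $p_\lambda$ of $\langle p_\alpha \mid \alpha < \lambda\rangle$, and the $V$-${<}\kappa$-completeness of $\dot U$ (a consequence of $V$-normality together with uniformity, applicable because $\langle Y_\alpha \mid \alpha < \lambda\rangle \in V$ and $\lambda < \gamma \leq \kappa$) ensures that $p_\lambda$ forces $Y_\lambda = \bigcap_{\alpha<\lambda} Y_\alpha \in \dot U$. Hence $Y_\lambda \in I^+$ and the invariant is preserved through the limit.

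The delicate point is precisely this limit step: we need the ${<}\gamma$-closure of $P$ to continue the sequence of conditions, and simultaneously the $V$-completeness of $\dot U$ to conclude that the intersection of \emph{Choose}'s prior choices, a set in $V$, remains in the generic ultrafilter below $p_\lambda$. Together these secure the invariant throughout the game, so that the intersection of \emph{Choose}'s responses is $I$-positive at every proper initial stage $\delta < \gamma$, which is exactly the winning condition for $\UU(X, I, {<}\gamma)$.
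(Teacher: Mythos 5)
Your proposal is correct and follows essentially the same route as the paper's proof: \emph{Choose} maintains a decreasing sequence of conditions forcing the current intersection of choices into $\dot U$, deciding each partition by the ultrafilter property at successor stages and using the ${<}\gamma$-closure of $P$ together with the $V$-${<}\kappa$-completeness of $\dot U$ at limits. Your explicit treatment of normality and of the limit-stage completeness argument fills in details the paper leaves as "straightforward."
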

\begin{proof}
%
  It is straightforward to check that $I\supseteq\bd_\kappa$ is normal, using that $\dot U$ is forced to be uniform and $V$-normal.
  We will describe a winning strategy for \emph{Choose} in the game $\UU(X,I,{<}\gamma)$. At each stage $\alpha$, \emph{Choose} not only decides for a set $C_\alpha$ to actually respond with, but they also pick a condition $p_\alpha\in P$ forcing that $\check C_\alpha\in\dot U$, such that these conditions form a decreasing sequence of conditions.
  
  At stage $0$, assume that \emph{Cut} presents the partition $\langle A_0,B_0\rangle$ of $X$. Since some condition forces that $\check X\in\dot U$, \emph{Choose} may pick $C_0$ to either be $A_0$ or $B_0$, and pick a condition $p_0$ forcing that $\check C_0\in\dot U$.
  At successor stages $\alpha+1$, we proceed essentially in the same way. Assume that \emph{Cut} presents the partition $\langle A_{\alpha+1},B_{\alpha+1}\rangle$ of $C_\alpha$. Since $p_\alpha\forces\check C_\alpha\in\dot U$, \emph{Choose} may pick $C_{\alpha+1}$ to either be $A_{\alpha+1}$ or $B_{\alpha+1}$, and pick a condition $p_{\alpha+1}\le p_\alpha$ forcing that $\check C_{\alpha+1}\in\dot U$.
  
  At limit stages $\alpha<\gamma$, \emph{Cut} presents a partition $\langle A_\alpha,B_\alpha\rangle$ of $\bigcap_{\beta<\alpha}C_\beta$. Since the forcing notion $P$ is ${<}\gamma$-closed, we may let $p'_\alpha$ be a lower bound of $\langle p_\beta\mid\beta<\alpha\rangle$. Then, $p'_\alpha$ forces that $\bigcap_{\beta<\check\alpha}\check C_\beta\in\dot U$, and \emph{Choose} may pick $C_\alpha$ to either be $A_\alpha$ or $B_\alpha$, and pick a condition $p_\alpha\le p'_\alpha$ forcing that $\check C_\alpha\in\dot U$.
\end{proof}

The following result, which is also a consequence of our above results, is attributed to Richard Laver in \cite[Comment (4) after the proof of Theorem 4]{gjm}: It is consistent for \emph{Choose} to have a winning strategy in the game $\UU(\omega_2,I,\omega)$ for some uniform normal ideal $I$ on $\omega_2$, and in particular, it is consistent for \emph{Choose} to have a winning strategy in the game $\UU(\omega_2,\omega)$. By Observation \ref{observation:cutwinsulam}, $\omega_2$ will clearly be the least cardinal $\kappa$ so that \emph{Choose} has a winning strategy in the game $\UU(\kappa,\omega)$, for \emph{Cut} has a winning strategy in the game $\UU(\omega_1,\omega)$. We can now show that for either of the games $\UU(\nu,\omega)$ 
and $\UU(\nu,{\le}\omega)$, any \emph{small} successor cardinal $\nu$ of a regular and uncountable cardinal can be least so that \emph{Choose} wins. Note that the assumptions of the following observation are met in models of the form $L[U]$, when $U$ is a measurable ultrafilter on a measurable cardinal $\theta$, as we argue in the proof of Observation \ref{observation:measurableleast}.

\begin{observation}\label{observation:successorminimal}
  If $\theta$ is measurable with no generically measurable cardinals below, and given some regular and uncountable $\kappa<\theta$, then in the L{\'e}vy collapse extension by the notion of forcing $\Coll(\kappa,{<}\theta)$, making $\theta$ become $\kappa^+$, \emph{Choose} has a winning strategy in the game $\UU(\theta,\gamma)$ whenever $\gamma<\kappa$, however \emph{Choose} does not have a winning strategy in the game $\UU(\lambda,{\le}\omega)$ for any $\lambda<\theta$.
\end{observation}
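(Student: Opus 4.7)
The plan is to treat the positive and negative assertions separately. The positive half combines a standard Silver-style lifting of the Lévy collapse with Observation \ref{observation:genericallyulam}, and the negative half combines Theorem \ref{theorem:ulamstrength} with a downward reflection of generic measurability to the ground model.

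For the positive direction, I would mirror the construction sketched just before Observation \ref{observation:genericallyulam}, but with $\Coll(\kappa,{<}\theta)$ in place of $\Coll(\aleph_{n-1},{<}\kappa)$. Fix in $V$ a normal measure on $\theta$ inducing an ultrapower $j\colon V\to M$ with $\crit(j)=\theta$. Since $j$ is the identity on $\Coll(\kappa,{<}\theta)$, in $V[G]$ the quotient $\dot Q=\Coll(\kappa,{<}j(\theta))^M/G$ is a definable, ${<}\kappa$-closed poset, and after forcing with $\dot Q$ over $V[G]$, $j$ lifts canonically to $\tilde j\colon V[G]\to M[G*H]$ with critical point $\theta$. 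This witnesses that $\theta$ is generically measurable in $V[G]$ via ${<}\kappa$-closed forcing. Observation \ref{observation:genericallyulam} then supplies \emph{Choose} with a winning strategy in $\UU(\theta,I,{<}\kappa)$ for the associated hopeless ideal $I\supseteq\bd_\theta$. Since any $I$-positive intersection is unbounded in $\theta$, this strategy in particular wins $\UU(\theta,\gamma)$ for every limit ordinal $\gamma<\kappa$.

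For the negative direction, suppose toward contradiction that in $V[G]$ there is some regular uncountable $\lambda<\theta$ with \emph{Choose} winning $\UU(\lambda,{\le}\omega)$. Theorem \ref{theorem:ulamstrength}, applied in $V[G]$, then produces a cardinal $\mu\le\lambda$ that is generically measurable in $V[G]$; that is, in some further generic extension $V[G][H]$ there is an elementary embedding $k\colon V[G]\to N$ with $\crit(k)=\mu$ and $N$ transitive. By the Laver--Woodin ground model definability theorem, $V$ is a definable class of $V[G]$, so $k\restr V$ is an elementary embedding from $V$ into the transitive class $k(V)\subseteq N$ with critical point $\mu$. Since $V[G][H]$ is itself a generic extension of $V$, this shows that $\mu<\theta$ is generically measurable in $V$, contradicting our hypothesis.

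The main obstacle is the lifting step in part one: one must verify that $\dot Q$ is indeed ${<}\kappa$-closed in $V[G]$ and that the derived $V[G]$-normal ultrafilter on $\theta$ meets the hypotheses of Observation \ref{observation:genericallyulam}. Both are routine generalizations of the argument the paper sketches for the $\Coll(\aleph_{n-1},{<}\kappa)$ case, so no new ingredients are required.
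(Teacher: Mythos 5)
Your proposal is correct and follows essentially the same route as the paper: lift the measure on $\theta$ through $\Coll(\kappa,{<}\theta)$ via the ${<}\kappa$-closed quotient collapse to get generic measurability witnessed by ${<}\kappa$-closed forcing, apply Observation~\ref{observation:genericallyulam} for the positive half, and combine Theorem~\ref{theorem:ulamstrength} with downward reflection of generic measurability to $V$ for the negative half. The only difference is that you justify the reflection step explicitly via Laver--Woodin ground model definability, where the paper simply asserts it as clear; this is a harmless (and welcome) elaboration rather than a different argument.
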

\begin{proof}
  Apply the L{\'e}vy collapse forcing to make $\theta$ become $\kappa^+$, which is ${<}\kappa$-closed. Work in a generic extension for this forcing. As we argued above, $\kappa^+$ is generically measurable as witnessed by ${<}\kappa$-closed forcing. Thus by Observation \ref{observation:genericallyulam}, \emph{Choose} has a winning strategy in the game $\UU(\theta,\gamma)$ whenever $\gamma<\kappa$. Assume for a contradiction that there were some $\lambda$ with $\gamma<\lambda<\theta$ for which \emph{Choose} had a winning strategy in the game $\UU(\lambda,{\le}\gamma)$. By Theorem \ref{theorem:ulamstrength}, we obtain a generically measurable cardinal $\nu\le\lambda$. But then clearly, $\nu$ is also generically measurable in our ground model, contradicting our assumption.
\end{proof}

\section{Cut and choose games at small inaccessibles}\label{section:smallinaccessibles}

In Observation \ref{observation:measurableleast}, we observed that a measurable cardinal can be the least cardinal at which \emph{Choose} wins cut and choose games, and in Observation \ref{observation:successorminimal}, we argued that consistently, \emph{Choose} can first win cut and choose games at successors of regular cardinals. In this section, we want to show that  it is consistent for \emph{Choose} to first win cut and choose games at \emph{small} inaccessible cardinals, that is inaccessible cardinals which are not measurable, and as we will see, not even weakly compact. The key result towards this will be the following, which is an adaption of Kunen's technique \cite{kunen} of killing the weak compactness of a measurable cardinal by adding a homogeneous Suslin tree $T$, and then resurrecting measurability by forcing with $T$. Our presentation is based on the presentation of this result that is provided by Gitman and Welch in \cite[Section 6]{MR2830435}. The difference in our result below will be that we need our homogeneous Suslin tree $T$ to have additional closure properties, and that this will require us to do a little extra work at some points in the argument.

\begin{theorem}\label{theorem:winulamatsmallinaccessible}
  Given a measurable cardinal $\kappa$, and a regular cardinal $\gamma<\kappa$, one can force to obtain a model in which $\kappa$ is still inaccessible (in fact, Mahlo) but not weakly compact anymore, however generically measurable, as witnessed by ${<}\gamma^+$-closed forcing. Hence in particular, by Observation \ref{observation:genericallyulam}, \emph{Choose} wins the game $\UU(\kappa,I,\gamma)$ for some normal ideal $I\supseteq\bd_\kappa$ on $\kappa$, and thus also $\UU(\kappa,\gamma)$ and $\UU(\kappa,{\le}\gamma)$.\footnote{This also shows that under the assumption of the consistency of a measurable cardinal, an analogue of Observation \ref{observation:cutwinsidealulam} (4) does not hold for games of length less than $\kappa$.}
\end{theorem}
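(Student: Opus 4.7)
\emph{The plan.} We adapt Kunen's classical technique \cite{kunen} (as presented in \cite[Section 6]{MR2830435}) of killing weak compactness of a measurable cardinal by adding a homogeneous Suslin tree $T$ and then resurrecting a measurability witness by forcing with $T$; the new wrinkle is that we must arrange $T$ to be ${<}\gamma^+$-closed as a notion of forcing. Fix $j\colon V\to M$ witnessing the measurability of $\kappa$ with $M^\kappa\subseteq M$, and assume GCH (arrange it by preliminary forcing if necessary). Define a reverse Easton iteration $P_\kappa$ of length $\kappa$ which at each inaccessible stage $\alpha\in(\gamma,\kappa]$ forces with $Q_\alpha$, the poset of approximations to a homogeneous, ${<}\gamma^+$-closed $\alpha$-Suslin tree: conditions are normal trees of height below $\alpha$ in which every chain of length ${\le}\gamma$ already has an upper bound within the condition, ordered by end-extension, together with the usual bookkeeping securing Suslinness and homogeneity of the generic tree. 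Standard arguments show $Q_\alpha$ is $\alpha$-closed and $\alpha^+$-cc, and therefore $P_\kappa$ preserves cofinalities as well as the Mahloness and inaccessibility of $\kappa$.

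\emph{Non-weak compactness in $V[G]$.} In the $P_\kappa$-extension $V[G]$, the stage-$\kappa$ generic yields a homogeneous $\kappa$-Suslin tree $T$ which by construction is ${<}\gamma^+$-closed as a forcing notion. Since $T$ is a $\kappa$-Aronszajn tree, the tree property fails at $\kappa$, so $\kappa$ is not weakly compact in $V[G]$, while still being Mahlo by the previous paragraph.

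\emph{Generic measurability as witnessed by $T$.} In $M$ we factor $j(P_\kappa)\cong P_\kappa*\dot R$, where $\dot R$ is forced to be highly closed in $M[G]$. Using $M^\kappa\subseteq M$, GCH, and this closure, we carry out the usual master-condition / diagonal construction to build in $V[G]$ an $M[G]$-generic filter $H\subseteq \dot R_G$, and thus lift $j$ to $j^*\colon V[G]\to M[G][H]$. Let now $b$ be any $T$-generic filter over $V[G]$. In $M[G][H]$ the tree $j^*(T)$ has height $j(\kappa)>\kappa$, and any node of $j^*(T)$ at level $\kappa$ determines, via the pointwise image of $T$ under $j^*$, a cofinal branch through a copy of $T$. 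By the homogeneity of $T$ (in $V[G]$), we can pick an automorphism moving this canonical branch to $b$, which lets the lift extend to $j^{**}\colon V[G][b]\to M[G][H][b^*]$ for a suitable $b^*$. The derived ultrafilter $U=\{X\subseteq\kappa:\kappa\in j^{**}(X)\}\in V[G][b]$ is a $V[G]$-normal $V[G]$-ultrafilter on $\kappa$; since $T$ is ${<}\gamma^+$-closed, this witnesses generic measurability of $\kappa$ as required. The conclusions about the games then follow from Observation~\ref{observation:genericallyulam}.

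\emph{Main obstacle.} The principal technical point is designing $Q_\alpha$ so that the generic tree simultaneously satisfies Suslinness, homogeneity, and ${<}\gamma^+$-closure as a forcing, while $Q_\alpha$ itself retains the $\alpha$-closure and $\alpha^+$-cc needed to preserve $\kappa$'s properties and to keep the tail $\dot R$ sufficiently closed in $M[G]$ for the lifting. A secondary difficulty is verifying that this modified homogeneity still suffices to transport an arbitrary $T$-generic $b$ onto the canonical branch through $j^*(T)$, so that the Kunen-style lifting argument goes through unchanged in the ${<}\gamma^+$-closed setting.
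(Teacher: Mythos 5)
Your overall plan (Kunen's Suslin-tree technique, arranging the generic tree to be ${<}\gamma^+$-closed as a forcing) is the same as the paper's, but your lifting step contains a genuine error. You force with the Suslin-tree poset $Q_\kappa$ at stage $\kappa$ as part of $P_\kappa$, so that $T\in V[G]$, and you then claim to build $H\in V[G]$ and lift $j$ to $j^*\colon V[G]\to M[G][H]$ \emph{before} forcing with $T$. This cannot work: the derived filter $\{X\subseteq\kappa \mid \kappa\in j^*(X)\}$ would be a ${<}\kappa$-complete nonprincipal ultrafilter on $\kappa$ lying in $V[G]$, making $\kappa$ measurable --- in particular weakly compact --- in $V[G]$, contradicting the fact that $T$ is a $\kappa$-Suslin (hence $\kappa$-Aronszajn) tree there. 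Concretely, the obstruction is the master condition: to get $j[G]$ into the $j(P_\kappa)$-generic, the stage-$j(\kappa)$ coordinate must contain conditions end-extending the stage-$\kappa$ tree $T$, and any such condition has nodes at level $\kappa$, i.e.\ cofinal branches of $T$, which exist neither in $V[G]$ nor in any extension of $M[G]$ by sufficiently closed forcing. In Kunen's argument the lift is carried out only in $V[G][b]$, after the $T$-generic branch $b$ becomes available to form the master condition consisting of $T$ with the $\Aut(T)$-orbit of $b$ adjoined as a top level; your sketch inverts this order, and also leaves unaddressed that this master condition need not belong to $M[G][H]$. (A smaller inaccuracy: $Q_\alpha$ is not $\alpha$-closed but only ${<}\alpha$-strategically closed --- at limits one must add a top level carefully so that the tree stays pruned and its automorphism group stays transitive.)

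The paper sidesteps the direct lifting entirely. It prepares with a reverse Easton iteration of \emph{Cohen} forcing at inaccessibles below $\kappa$, so that by Silver's standard argument adding a single Cohen subset of $\kappa$ resurrects measurability; it then forces with the Suslin-tree poset $\mathbb Q$ at $\kappa$ alone and proves that $\mathbb Q*\dot T$ has a dense ${<}\kappa$-closed subset (conditions $\langle t,f,\check b\rangle$ with $b$ a top node of $t$), hence is equivalent to $\kappa$-Cohen forcing. Thus the $T$-generic extension of $V[G]$ is literally a Cohen-subset-of-$\kappa$ extension of the prepared model, in which $\kappa$ is measurable, and the generic measurability of $\kappa$ in $V[G]$ as witnessed by the ${<}\gamma^+$-closed forcing $T$ follows with no bespoke lifting. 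If you want to salvage your route, you would need to run the entire lift inside $V[G][b]$ and verify that the orbit-of-$b$ master condition lies in the relevant $M$-side model; the absorption argument is considerably cleaner.
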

\begin{proof}
  We first force with a reverse Easton iteration, adding a Cohen subset to every inaccessible cardinal below $\kappa$. Let us consider the generic extension thus obtained as our ground model in the following. By well-known standard arguments (similar to those in the proof of Silver's theorem about violating the $\GCH$ at a measurable cardinal (see \cite[Theorem 21.4]{MR1940513})), adding a Cohen subset of $\kappa$ to that model will resurrect the measurability of $\kappa$ in the extension.  
  We will force to add a Suslin tree $T$ to $\kappa$ that is closed under ascending ${<}\gamma^+$-sequences, and show that $\kappa$ is generically measurable in that extension, as witnessed by forcing with that Suslin tree (with its reversed ordering), which now is a ${<}\gamma^+$-closed notion of forcing.
  
\begin{definition}
  A collection $\mathcal G$ of automorphisms of a tree $T$ \emph{acts transitively} on~$T$ if for every $a$ and $b$ on the same level of $T$, there is $\pi\in\mathcal G$ with $\pi(a)=b$.
\end{definition}

\begin{definition}
  A \emph{normal} $\alpha$-tree is a tree $T$ of height $\alpha$ with the following properties.
  \begin{itemize}
    \item Each $t\in T$ is a function $t\colon\beta\to 2$ for some $\beta<\alpha$, and $T$ is ordered by end-extension.
    \item $T$ is closed under initial segments.
    \item If $\beta+1<\alpha$ and $t\colon\beta\to 2$ is in $T$, then $t^\frown 0$ and $t^\frown 1$ are both in $T$.
    \item If $\beta<\alpha$ and $t\colon\beta\to 2$ is in $T$, then for every $\gamma$ with $\beta<\gamma<\alpha$, there is some $s\colon\gamma\to 2$ in $T$ that extends $s$ (this property is abbreviated by saying that $T$ is \emph{pruned}).
  \end{itemize}
\end{definition}

\begin{lemma}
  If $\kappa$ is inaccessible, and $\gamma<\kappa$ is a regular cardinal, then there is a ${<}\kappa$-strategically closed notion of forcing $P^\kappa_\gamma$ of size $\kappa$ that adds a $\kappa$-Suslin tree within which every increasing sequence of length at most $\gamma$ has an upper bound.
\end{lemma}
\begin{proof}
  Fix $\gamma$ and $\kappa$, and let $P^\kappa_\gamma$ be the following notion of forcing $\mathbb Q$ consisting of conditions of the form $\langle t,f\rangle$, for which the following hold:
  \begin{itemize}
    \item $t$ is a normal $(\alpha+1)$-tree that is closed under increasing unions of length at most $\gamma$, for some $\alpha<\kappa$,
    \item $\Aut(t)$ acts transitively on $t$,\footnote{The requirements on $\Aut(t)$ are needed to ensure that $\mathbb{Q}*\dot{T}$ is ${<}\kappa$-closed in Observation \ref{obs Suslin <kappa-closed}.}
     and
    \item $f\colon\nu\to\Aut(t)$ is an injective enumeration of $\Aut(t)$ for some ordinal $\nu$.
  \end{itemize}
  Conditions are ordered naturally, that is $\langle t_1,f_1\rangle\le\langle t_0,f_0\rangle$ when $t_1$ end-extends $t_0$,\footnote{That is, $t_1\supseteq t_0$ and $t_1$ restricted to the height of $t_0$ equals $t_0$.} and for all $\xi\in\dom(f_0)$, $f_1(\xi)$ extends $f_0(\xi)$.

  \begin{claim}[Strategic Closure]\label{strategicclosure}
    $\mathbb Q$ is ${<}\kappa$-strategically closed.
  \end{claim}
  \begin{proof}
    We imagine two players, Player I and Player~II taking turns for $\kappa$-many steps to play increasingly strong conditions in $\mathbb Q$. Player I has to start by playing the weakest condition of $\mathbb Q$, and is allowed to play at each limit stage of the game. The moves of Player I will be conditions denoted as $\langle t_i,f_i\rangle$, and the moves of Player II will be conditions denoted as $\langle t'_i,f'_i\rangle$. In order to show that $\mathbb Q$ is ${<}\kappa$-strategically closed, Player I has to ensure that at the end of the game, the decreasing sequence of conditions that has been produced by the above run has a lower bound in $\mathbb Q$. We will see in the argument below that it is only at limit steps when Player I has to be careful about their choice of play.
    
    Let $\langle t_0,f_0\rangle=\langle\{\emptyset\},\langle\id\rangle\rangle$ be the weakest condition of $\mathbb Q$.  Given $\langle t_i,f_i\rangle$ for some $i<\kappa$, let $\langle t'_i,f'_i\rangle\le\langle t_i,f_i\rangle$ be the response of Player~II, and let Player~I respond by any condition $\langle t_{i+1},f_{i+1}\rangle\le\langle t'_i,f'_i\rangle$ in $\mathbb Q$.
    
    At limit stages $\sigma\le\lambda$, we let $\bar t_\sigma$ be the union of the $t_\xi$'s for $\xi<\sigma$, and we let $\bar f_\sigma$ be the coordinate-wise union of the $f_\xi$'s for $\xi<\sigma$. We define the next move~$\langle t_\sigma,f_\sigma\rangle$ of Player~I as follows. In order to obtain $t_\sigma$, we add a top level to $\bar t_\sigma$ -- we do so by simply adding unions for all branches through $\bar t_\sigma$. The enumeration $f_\sigma$ is then canonically induced by $t_\sigma$ and by the $f_\xi$'s.
    
    This process can be continued for $\kappa$-many steps, showing that $\mathbb Q$ is ${<}\kappa$-stra\-te\-gi\-cally closed, as desired.
  \end{proof}
  
  Note that it is easy to extend conditions in $\mathbb Q$ to have arbitrary height below $\kappa$. A crucial property of $\mathbb Q$ is the following.
  
  \begin{claim}[Sealing]\label{sealing}
    Suppose $p\in\mathbb Q$, $\dot T$ is the canonical $\mathbb Q$-name for the generic tree added as the union of the first components of conditions in the generic filter, and $p\forces\dot A$ is a maximal antichain of $\dot T$. Then, there is $q\le p$ in $\mathbb Q$ forcing that $\dot A$ is (level-wise) bounded in $\dot T$. This means that $\dot T$ is forced to be a $\kappa$-Suslin tree.
  \end{claim}
  \begin{proof}
    Suppose $p=\langle t_0,f_0\rangle$, with $\dom(f_0)=\lambda_0$. Choose some $M\prec H(\kappa^+)$ of size less than $\kappa$ containing $\mathbb Q$, $p$, $\dot T$ and $\dot A$ as elements, such that $M$ is closed under $\gamma$-sequences, and such that $\Ord^M\cap\kappa$ is equal to some strong limit cardinal $\beta<\kappa$ of cofinality greater than $\gamma$. Let $\varphi\colon\kappa\to\kappa$ be a function in $M$ which enumerates each $\xi<\kappa$ unboundedly often. Working entirely inside of $M$, we carry out a construction in $\kappa$-many steps (so this construction only has $\beta$-many steps from the point of view of $V$). By possibly strengthening $p$, we may without loss of generality assume that there is some $a\in t_0$ such that $\langle t_0,f_0\rangle\forces\check a\in\dot A$. Let $B_0$ be any branch through $a$ in $t_0$. Let $b_0$ be the top node of $B_0$. The node $b_0$ begins the branch we will try to construct.
    
    Given $\langle t_i,f_i\rangle\in\mathbb Q$, with $\dom(f_i)=\lambda_i$, and given $b_i$, for some $i<\kappa$, let $\langle t_{i+1},f_{i+1}\rangle\in\mathbb Q$ strengthen $\langle t_i,f_i\rangle$, such that $\dom(f_{i+1})=\lambda_{i+1}$, and with the property that for every $s\in t_i$, there is $a_s\in t_{i+1}$ that is compatible with $s$ and such that $\langle t_{i+1},f_{i+1}\rangle$ forces that $\check a_s\in\dot A$. It is straightforward to obtain such a condition in $|t_0|$-many steps, making use of Claim \ref{strategicclosure}. Now, say $\varphi(i)=\rho$. If $\rho\ge\lambda_i$, let $b_{i+1}$ be a node on the top level of $t_{i+1}$ extending $b_i$. Otherwise, let $s=f_i(\rho)(b_i)$. Let $s'$ be on the top level of $t_{i+1}$ above both $s$ and $a_s$, and let $b_{i+1}=f_{i+1}(\rho)^{-1}(s')$. This will have the effect that whenever $\langle t,f\rangle\le_{\mathbb Q}\langle t_{i+1},f_{i+1}\rangle$, $\langle t,f\rangle$ will  force $\check f(\check\rho)(\check b_{i+1})$ to be above an element of $\dot A$ in this latter case.
        
    At limit stages $\sigma$, we let $\bar t_\sigma$ be the union of the $t_\xi$'s for $\xi<\sigma$, and we let $\bar f_\sigma$ be the coordinate-wise union of the $f_\xi$'s for $\xi<\sigma$. Let $b_\sigma=\bigcup_{\xi<\sigma}b_\xi$. Now, in order to obtain $t_\sigma$, we add a top level to $\bar t_\sigma$. If $\sigma$ has cofinality larger than $\gamma$, we pick this top level of $t_\sigma$ to be $\{\bigcup c[\{b_\xi\mid\xi<\sigma\}]\mid c\in\range(\bar f_\sigma)\}$. Note that since the identity map is an element of $\range(\bar f_\sigma)$, it follows in particular that $b_\sigma\in t_\sigma$. If $\sigma$ has cofinality at most $\gamma$, we pick this top level to consist of all unions of branches through $\bar t_\sigma$ (note that by the closure properties of $M$, these are the same in $M$ and in $V$, and we thus obtain an actual condition in $\mathbb Q$). Finally, $f_\sigma$ is canonically induced by $t_\sigma$ and the $f_\xi$'s in each case. It is easy to check that $\langle t_\sigma,f_\sigma\rangle$ is a condition in $\mathbb Q$ in each case, however note that having $f_\xi$ act transitively on $t_\xi$ for each $\xi<\sigma$ is needed to ensure that $t_\sigma$ is pruned.
    
    In $V$, after $\beta$-many steps, we build $q=\langle t,f\rangle=\langle t_\beta,f_\beta\rangle$ by unioning up the sequence of conditions $\langle\langle t_i,f_i\rangle\mid i<\beta\rangle$, adding a top level to $\bar t_\beta=\bigcup_{i<\beta}t_i$, and extending $\bar f_\beta$ as in the limit ordinal case above. Note that since $\beta$ has cofinality greater than $\gamma$, we will be in the case when we only include top level nodes in $t_\beta$ above \emph{certain} branches of $\bar t_\beta$.
    
    We finally need to show that $\langle t,f\rangle$ forces $\dot A$ to be bounded in $\dot T$. We will do so by showing that it forces $\dot A$ to be a maximal antichain of $\check t$ (in fact, of 
$\bar t_\beta$). Let $b$ be a branch of $\bar t_\beta$, induced by some node on the top level of $t$. This node will have to be of the form $\bigcup c[\{b_\xi\mid\xi<\beta\}]$ for $c=\bar f_\beta(\rho)=\bigcup_{\xi<\beta} f_\xi(\rho)$ for some ordinal~$\rho<\beta$. But then, using that $\varphi\in M$, and that $\beta=M\cap\kappa$, it follows that $\varphi(i)=\rho$ for unboundedly many ordinals $i<\beta$. Pick one such $i$ for which $\rho<\lambda_i$, noting that $\bigcup_{i<\beta}\lambda_i=\beta$. By our remark made at the end of the successor ordinal step of our above construction, it now follows that $\langle t,f\rangle$ forces $\check c[\{\check b_\xi\mid\xi<\check\beta\}]$ to meet $\dot A$ (within $\check t$; in fact, within $\bar t_\beta$).
  \end{proof}
  By the above claim, it is immediate that $\dot T$ is forced to be a $\kappa$-Suslin tree. By the definition of $\mathbb Q$, it is also immediate that every increasing sequence of length at most $\check\gamma$ in $\dot T$ is forced to have an upper bound in $\dot T$ (note that by its ${<}\kappa$-strategic closure, $\mathbb Q$ does not add any new ${<}\kappa$-sequences of elements of $\dot T$).
\end{proof}

\begin{observation}
\label{obs Suslin <kappa-closed} 
  If we let $\dot T$ be the canonical name for the $\kappa$-Suslin tree added by forcing with $\mathbb Q$, then $\mathbb Q*\dot T$ is equivalent to $\kappa$-Cohen forcing, where the ordering of the notion of forcing $\dot T$ is the reverse tree ordering.
\end{observation}
\begin{proof}
  It suffices to argue that $\mathbb Q*\dot T$ has a dense subset of conditions that is ${<}\kappa$-closed. Our dense set will be conditions of the form $\langle t,f,\check b\rangle$ where $b$ is a node on the top level of $t$. Given a decreasing sequence $\langle\langle t_i,f_i,\check b_i\rangle\mid i<\lambda\rangle$ of conditions in this dense set of length $\lambda<\kappa$, we may find a lower bound as in the limit stage case in the proof of Claim \ref{sealing}, with the sequence of $b_i$'s inducing a branch through the union of the $t_i$'s.
\end{proof}

  It thus follows that after forcing with $\mathbb Q*\dot T$, $\kappa$ is measurable, and thus $\mathbb Q$ forces that $\kappa$ is generically measurable as witnessed by the notion of forcing $\dot T$, which is ${<}\gamma^+$-closed, as desired. It is also clear that $\kappa$ is Mahlo after forcing with $\mathbb Q$, for otherwise it could not be measurable in the further $\dot T$-generic extension.
\end{proof}


Note that in particular, if in the starting model there are no generically measurable cardinals below $\kappa$, then in our forcing extension above, arguing as in the proof of Observation \ref{observation:measurableleast}, $\kappa$ is the least cardinal $\lambda$ such that \emph{Choose} has a winning strategy in the game $\UU(\lambda,\omega)$. The same holds for $\UU(\lambda,{\le}\omega)$.

\section{Cutting into a larger number of pieces}\label{section:generalizations}

Let us start by considering variants of cut and choose games in which we allow \emph{Cut} to cut into a larger number of pieces in each of their moves. We again fix a regular and uncountable cardinal $\kappa$ and a monotone family $I$ on $\kappa$ throughout.

\begin{definition}\label{definition:Ulamwithlargecuts}
    For any cardinal $\nu<\kappa$, and any limit ordinal $\gamma<\kappa$, we introduce the following variants $\UU_\nu(X,I,\gamma)$, $\UU_\nu(X,I,{\le}\gamma)$ and $\UU_\nu(X,I,{<}\gamma)$ of the games $\UU(X,I,\gamma)$, $\UU(X,I,{\le}\gamma)$ and $\UU(X,I,{<}\gamma)$, allowing also for $\gamma=\kappa$ in $\UU_\nu(X,I,{<}\gamma)$: In each move, \emph{Cut} is allowed to cut $X$ into up to $\nu$-many rather than just two pieces, and as before, \emph{Choose} will pick one of them. For any cardinal $\nu\le\kappa$, we also introduce variants $\UU_{{<}\nu}(X,I,\gamma)$, $\UU_{{<}\nu}(X,I,{\le}\gamma)$ and $\UU_{{<}\nu}(X,I,{<}\gamma)$: \emph{Cut} is now allowed to cut $X$ into any number of less than $\nu$-many pieces in each of their moves. The winning conditions for each of these variants are the same as for the corresponding games defined above. 
\end{definition}


\medskip

If $I$ is a ${<}\kappa$-complete ideal, then in the games $\UU_\nu(X,I,\gamma)$ and $\UU_\nu(X,I,{<}\gamma)$ above, and their variants where $\nu$ is replaced by ${<}\nu$, we could equivalently require \emph{Cut} to cut the starting set $X$ into $I$-positive sets in each of their moves: \emph{Choose} will clearly lose if they ever decide for a set in $I$, but it is also pointless for \emph{Cut} to cut off pieces in $I$, using that either our games have length less than $\kappa$, or in the case of games of length $\kappa$, the winning conditions only depend on properties of proper initial segments of its runs, and that $I$ is ${<}\kappa$-complete.

%
%

\medskip

The following generalizes Observation \ref{observation:cutwinsidealulam}, showing that it is still not very interesting to consider winning strategies for \emph{Cut} in these games.

\begin{observation}\label{observation:generalizedtrivialforcut}
  Let $\gamma<\kappa$ be a limit ordinal, let $\nu<\kappa$ be a regular cardinal, let~$I$ be a monotone family such that $\kappa$ can not be written as a ${<}\kappa$-union of elements of $I$, and let $X\in I^+$. Let $\nu^{<\gamma}=\sup\{\nu^\delta\mid\delta<\gamma$ is a cardinal$\}$. Then, the following hold.
  \begin{enumerate}
    \item \label{observation:generalizedtrivialforcut 1}
\emph{Cut} wins $\UU_\nu(\kappa,I,\gamma)$ if and only if $\kappa\le\nu^{|\gamma|}$.

    \item \label{observation:generalizedtrivialforcut 2}
\emph{Cut} wins $\UU_\nu(\kappa,I,{\le}\gamma)$ if and only if $\kappa\le\nu^{<\gamma}$.

    \item \label{observation:generalizedtrivialforcut 3}
    If $\kappa>\nu^{<\gamma}$, then \emph{Cut} does not win $\UU_\nu(\kappa,I,{<}\gamma)$.
    
    \item \label{observation:generalizedtrivialforcut 4}
    If $\kappa$ is (strongly) inaccessible, then \emph{Cut} does not win $\UU_{{<}\kappa}(\kappa,I,\gamma)$.
    
    \item \label{observation:generalizedtrivialforcut 5}
    If $\kappa$ is weakly compact, then \emph{Cut} does not win $\UU_{{<}\kappa}(\kappa,\bd_\kappa,{<}\kappa)$.
  \end{enumerate}
\end{observation}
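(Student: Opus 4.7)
The plan is to treat parts \eqref{observation:generalizedtrivialforcut 1}--\eqref{observation:generalizedtrivialforcut 3} as direct generalizations of Observation~\ref{observation:cutwinsidealulam}\eqref{observation:cutwinsidealulam 1}--\eqref{observation:cutwinsidealulam 3}, replacing $2$ by $\nu$ throughout. For the forward implications, I would associate to a winning strategy $\sigma$ of \emph{Cut} the natural $\nu$-branching tree $T$ labelled by the running intersections of \emph{Choose}'s moves. For \eqref{observation:generalizedtrivialforcut 1}, $T$ has height $\gamma$ and at most $\nu^{|\gamma|}$ length-$\gamma$ branches, each branch-intersection lies in $I$, and since the branch determined by always following the piece containing a given $\alpha \in \kappa$ witnesses $\alpha$ in the corresponding intersection, $\kappa$ is a union of at most $\nu^{|\gamma|}$ elements of $I$; this contradicts the hypothesis on $I$ when $\nu^{|\gamma|} < \kappa$. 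For \eqref{observation:generalizedtrivialforcut 2}, $T$ has height at most $\gamma$ and at most $\nu^{<\gamma}$ nodes; every $\alpha \in \kappa$ must be caught at a leaf at some level $< \gamma$ labelled by an $I$-set, for otherwise its branch would reach level $\gamma$ with $\alpha$ in the final intersection and \emph{Choose} would win. For \eqref{observation:generalizedtrivialforcut 3}, the tree has at most $\nu^{<\gamma}$ leaves, all at levels $< \gamma$ and labelled by $I$-sets, which cover $\kappa$. The backward directions of \eqref{observation:generalizedtrivialforcut 1} and \eqref{observation:generalizedtrivialforcut 2} I would handle by mimicking the constructions of Observation~\ref{observation:cutwinsulam} and Observation~\ref{cut does not win the weak Ulam game for large kappa} with $2$ replaced by $\nu$: embed $\kappa$ into an appropriate subset of ${}^\gamma \nu$ and have \emph{Cut} partition by the next coordinate at each round.

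For \eqref{observation:generalizedtrivialforcut 4}, my approach is to suppose \emph{Cut} has a winning strategy $\sigma$ in $\UU_{<\kappa}(\kappa, I, \gamma)$, build the associated tree $T$ with $<\kappa$-branching at each node, and show by transfinite induction that every level $L_\delta$ with $\delta \le \gamma$ has size $< \kappa$. At successor stages this reduces to $|L_{\delta+1}| \le |L_\delta| \cdot \sup_{v \in L_\delta} b_v < \kappa$, which holds by regularity. At limit stages $\lambda$, I would bound $L_\lambda$ by the set of length-$\lambda$ branches through the subtree below $\lambda$, whose cardinality is at most $\bigl(\sup_{\delta < \lambda}\sup_{v\in L_\delta} b_v\bigr)^{|\lambda|} < \kappa$ by the strong limit property. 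Applied at $\lambda = \gamma$, this shows $T$ has fewer than $\kappa$ full branches; each of their intersections lies in $I$, and together they cover $\kappa$, contradicting the hypothesis. The main obstacle here is the limit-level estimate: a priori a $<\kappa$-branching tree of height $\gamma < \kappa$ could admit many branches, and it is precisely both halves of inaccessibility (regularity to bound the supremum of branching factors, strong limit to bound the exponentiation) that keep the count small.

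For \eqref{observation:generalizedtrivialforcut 5}, I would adapt the proof of Observation~\ref{observation:cutwinsidealulam}\eqref{observation:cutwinsidealulam 4} to $<\kappa$-cuts. Assuming $\kappa$ weakly compact and $\sigma$ a winning strategy for \emph{Cut} in $\UU_{<\kappa}(\kappa, \bd_\kappa, {<}\kappa)$, I would pick a sufficiently large regular $\theta$ and an elementary submodel $M \prec H(\theta)$ of size $\kappa$, closed under ${<}\kappa$-sequences, with $\sigma \in M$. By weak compactness, let $U$ be a uniform, ${<}\kappa$-complete $M$-ultrafilter on $\kappa$, and simulate a run in which \emph{Cut} follows $\sigma$ while \emph{Choose} always picks the unique piece of \emph{Cut}'s current ${<}\kappa$-cut lying in $U$ (unique by ${<}\kappa$-completeness). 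The ${<}\kappa$-closure of $M$ keeps all proper initial segments of the play inside $M$, so $\sigma$ remains applicable throughout. By ${<}\kappa$-completeness, the intersection of the first $\delta$ choices of \emph{Choose} lies in $U$ for every $\delta < \kappa$, hence is unbounded in $\kappa$ and therefore $\bd_\kappa$-positive. Thus \emph{Choose} beats $\sigma$, the desired contradiction.
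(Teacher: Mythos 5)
Your proposal is correct and takes essentially the same approach as the paper, whose entire proof consists of declaring parts (1)--(3) and (5) analogous to Observation~\ref{observation:cutwinsidealulam} and part (4) a minor adaptation of (3); your $\nu$-branching tree-counting arguments, the inductive level bound for (4) that pinpoints where regularity and the strong limit property are each used, and the $M$-ultrafilter simulation for (5) are exactly the intended adaptations, spelled out in more detail than the paper provides. The only caveat, inherited from the paper's own statement rather than introduced by you, is that the backward directions of (1) and (2) implicitly require the monotone family $I$ to contain all singletons, since \emph{Cut}'s coordinate-splitting strategy only drives the intersection down to a set of size at most one.
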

\begin{proof}
  The proofs of \eqref{observation:generalizedtrivialforcut 1}, \eqref{observation:generalizedtrivialforcut 2}, \eqref{observation:generalizedtrivialforcut 3} and \eqref{observation:generalizedtrivialforcut 5} are analogous to those in Observation \ref{observation:cutwinsidealulam}. The argument for \eqref{observation:generalizedtrivialforcut 4} is a minor adaption of that for \eqref{observation:generalizedtrivialforcut 3}.
  \end{proof}

Observation \ref{observation:genericallyulam} easily generalizes to the following, using that in the notation of that observation, $\dot U$ is forced to be $V$-${<}\kappa$-complete.

\begin{observation}\label{observation:genericallyulam2}
  Assume that $\gamma\le\kappa$ is regular, and that $\kappa$ is generically measurable, as witnessed by some ${<}\gamma$-closed notion of forcing $P$. Let $\dot U$ be a $P$-name for a $V$-normal $V$-ultrafilter on $\kappa$, and let $I$ be the hopeless ideal with respect to $\dot U$. Then, for any $X\in I^+$, \emph{Choose} has a winning strategy in the game $\UU_{{<}\kappa}(X,I,{<}\gamma)$.
\end{observation}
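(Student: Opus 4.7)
The plan is to adapt the strategy constructed in the proof of Observation~\ref{observation:genericallyulam} to handle partitions into ${<}\kappa$-many pieces rather than just two pieces. The key extra ingredient is that a $V$-normal $V$-ultrafilter $\dot U$ on $\kappa$ is forced to be $V$-${<}\kappa$-complete: given any partition $\langle A^\xi\mid\xi<\nu\rangle\in V$ of an element $Y\in V$ of $\dot U$ into fewer than $\kappa$-many pieces, exactly one piece is forced to lie in $\dot U$ (and this is decided by some condition extending the one forcing $\check Y\in\dot U$).

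First, I would describe \emph{Choose}'s strategy in exactly the format of Observation~\ref{observation:genericallyulam}: at each stage $\alpha<\gamma$ of a run, \emph{Choose} simultaneously decides on their response $C_\alpha$ and on a condition $p_\alpha\in P$ such that $p_\alpha\forces\check C_\alpha\in\dot U$, with the $p_\alpha$'s forming a decreasing sequence in $P$. At stage $0$, some condition forces $\check X\in\dot U$, so \emph{Choose} uses $V$-${<}\kappa$-completeness of $\dot U$ to pick a piece of \emph{Cut}'s initial partition which is forced into $\dot U$ by some condition $p_0$. At a successor stage $\alpha+1$, \emph{Cut} presents a partition $\langle A^\xi_{\alpha+1}\mid\xi<\nu_{\alpha+1}\rangle$ of $C_\alpha$ into fewer than $\kappa$-many pieces; since $p_\alpha\forces\check C_\alpha\in\dot U$, by $V$-${<}\kappa$-completeness there is an extension $p_{\alpha+1}\le p_\alpha$ and some $\xi<\nu_{\alpha+1}$ with $p_{\alpha+1}\forces\check A^{\check\xi}_{\alpha+1}\in\dot U$, and \emph{Choose} picks $C_{\alpha+1}=A^\xi_{\alpha+1}$.

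At limit stages $\alpha<\gamma$, the ${<}\gamma$-closure of $P$ yields a lower bound $p'_\alpha$ of $\langle p_\beta\mid\beta<\alpha\rangle$. Since $\alpha<\gamma\le\kappa$ and $\dot U$ is forced to be $V$-${<}\kappa$-complete, $p'_\alpha\forces\bigcap_{\beta<\check\alpha}\check C_\beta\in\dot U$; \emph{Choose} then treats \emph{Cut}'s partition of this intersection exactly as in the successor case, picking $C_\alpha$ and a condition $p_\alpha\le p'_\alpha$ forcing $\check C_\alpha\in\dot U$.

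Finally, I would verify that this is indeed a winning strategy for $\UU_{{<}\kappa}(X,I,{<}\gamma)$: by definition of the hopeless ideal, any set $Y\in V$ for which some condition forces $\check Y\in\dot U$ lies in $I^+$. Thus at every stage $\delta<\gamma$ (including limits, handled as above), the intersection of \emph{Choose}'s first $\delta$-many choices is forced by $p_\delta$ (respectively $p'_\delta$ at a limit) into $\dot U$, hence is $I$-positive as required. The only real point to be careful about is ensuring that the sequence of conditions remains well-defined through limits below $\gamma$, which is precisely where ${<}\gamma$-closure of $P$ is used; no genuine obstacle arises, since the whole proof is a direct quantitative strengthening of Observation~\ref{observation:genericallyulam} using $V$-${<}\kappa$-completeness of $\dot U$ in place of the two-element case.
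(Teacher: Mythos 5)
Your proposal is correct and follows exactly the route the paper intends: the paper gives no separate proof of this observation, merely remarking that Observation~\ref{observation:genericallyulam} "easily generalizes... using that $\dot U$ is forced to be $V$-${<}\kappa$-complete," which is precisely the adaptation you carry out (replacing the two-piece choice by an extension of $p_\alpha$ deciding which of the ${<}\kappa$-many pieces of a ground-model partition lies in $\dot U$). The only stylistic caveat is your parenthetical "exactly one piece is forced to lie in $\dot U$" — more precisely, the condition forces that exactly one piece lies in $\dot U$, and a further extension decides which one — but your subsequent argument states this correctly.
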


We also want to define cut and choose games on a cardinal $\kappa$ where \emph{Cut} can cut into $\kappa$-many pieces. A little bit of care has to be taken in doing so however. One thing to note is that we do have to require \emph{Cut} to actually cut into $I$-positive pieces, for otherwise, given that $I$ contains all singletons, they could cut any set $X$ into singletons in any of their moves, making it impossible for \emph{Choose} to win. Another observation is that if $I\supseteq\bd_\kappa$ is ${<}\kappa$-complete, then any disjoint partition $W$ of an $I$-positive set $X$ into less than $\kappa$-many $I$-positive sets is maximal: there cannot be an $I$-positive $A\subseteq X$ such that for any $B\in W$, $A\cap B\in I$. This is clearly not true anymore for partitions of size $\kappa$. However, as the following observation shows, in many cases, maximality is needed in order for such cut and choose games to be of any interest.

\begin{observation}
  If $I\supseteq\bd_\kappa$ is ${<}\kappa$-complete and has the property that any $I$-positive set can be partitioned into $\kappa$-many disjoint $I$-positive sets,\footnote{Note that this is the case for example if $I$ is the bounded or the nonstationary ideal.} and the game $\UU_\kappa(X,I,\gamma)$ were defined as the games $\UU_\nu(X,I,\gamma)$ in Definition \ref{definition:Ulamwithlargecuts}, however letting $\nu=\kappa$ while additionally requiring \emph{Cut} to always provide partitions into $I$-positive pieces, and $X\in I^+$, then \emph{Cut} has a winning strategy in the game $\UU_\kappa(X,I,\omega)$.
\end{observation}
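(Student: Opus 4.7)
My plan is to exhibit Cut's winning strategy via a partition tree $\{Y_\tau \mid \tau \in \kappa^{<\omega}\}$ with $Y_\emptyset = X$, such that at each node $\tau$ the family $\{Y_{\tau^\frown \gamma} \mid \gamma < \kappa\}$ partitions $Y_\tau$ into $\kappa$-many disjoint $I$-positive pieces (as provided by the hypothesis), and such that the induced \emph{path map} $\iota\colon X \to \kappa^\omega$, sending each $x$ to the unique $s$ with $x \in Y_{s\restr n}$ for all $n<\omega$, is injective. Cut would then play, at every history $\sigma \in \kappa^{<\omega}$, the partition $\{Y_{\sigma^\frown \gamma} \mid \gamma < \kappa\}$; for any branch $s \in \kappa^\omega$ produced by Choose, the intersection $\bigcap_n Y_{s\restr n} = \iota^{-1}(\{s\})$ would contain at most one element and thus lie in $I$, since $I \supseteq \bd_\kappa$ contains all singletons.

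I would construct the tree recursively on the level $n < \omega$, combining the hypothesis with a bookkeeping scheme. Enumerate the pairs of distinct elements of $X$ as $\{p_\eta \mid \eta < \kappa\}$, and fix a bijection $\eta \mapsto (n_\eta, \tau_\eta)$ between $\kappa$ and $\omega \times \kappa^{<\omega}$ (possible as $|\omega \times \kappa^{<\omega}|=\kappa$), assigning each pair $p_\eta = \{x, y\}$ to the unique \emph{separation slot} $(n_\eta, \tau_\eta)$. At each level $n$ and each $\tau \in \kappa^n$ with $Y_\tau \in I^+$, I would first apply the hypothesis to obtain a base partition of $Y_\tau$ into $\kappa$-many $I$-positive pieces $\{Q_\gamma \mid \gamma < \kappa\}$; then, if the unique pair $p_\eta$ assigned to slot $(n, \tau)$ has both of its elements $x, y$ still in $Y_\tau$ and lying in the same base piece $Q_\gamma$, I would move one of them, say $y$, from $Q_\gamma$ to a different base piece $Q_{\gamma'}$. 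Both modified pieces $Q_\gamma \setminus \{y\}$ and $Q_{\gamma'} \cup \{y\}$ would remain $I$-positive, since $I \supseteq \bd_\kappa$ contains singletons and $I$ is ${<}\kappa$-complete; and if $x, y$ were already in different base pieces, no adjustment would be needed.

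By design, every pair of distinct elements of $X$ gets separated either at its scheduled slot (if both elements reach $Y_{\tau_\eta}$ by level $n_\eta$) or at some earlier level. Consequently, the resulting path map $\iota$ would be injective on $X$, and Cut's strategy would win. The main obstacle I anticipate is the bookkeeping --- confirming that, across all slots and all levels simultaneously, the single-element swaps preserve the $I$-positivity of every piece in every partition --- but this follows routinely from the ${<}\kappa$-completeness of $I$ combined with $I \supseteq \bd_\kappa$, which together guarantee that altering an $I$-positive set by a ${<}\kappa$-sized subset preserves $I$-positivity.
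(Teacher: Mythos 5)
Your overall plan --- build a tree of partitions whose path map is injective, so that the intersection of the choices along any branch has at most one element and hence lies in $I$ --- is a reasonable one, but the bookkeeping you use to obtain injectivity does not work, and this is a genuine gap rather than a routine verification. You assign each pair $p_\eta=\{x,y\}$ to a single predetermined slot $(n_\eta,\tau_\eta)$ and separate it there only if both $x$ and $y$ happen to lie in $Y_{\tau_\eta}$. The dichotomy you then invoke --- ``separated at the scheduled slot or at some earlier level'' --- omits a third case: at level $n_\eta$ the two points may still be together, but sitting in some node $Y_{\tau'}$ with $\tau'\neq\tau_\eta$. In that case the slot is wasted and nothing in the construction ever separates $x$ from $y$ again, because the base partitions supplied by the hypothesis give you no control over which pairs they split. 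The failure is in fact quantitative and cannot be repaired by reshuffling the enumeration: along any fixed branch $s$ only countably many scheduled swaps occur, each moving a single point, so if the base partitions are chosen so that some fixed $I$-positive set $C$ (with $X\setminus C\in I^+$, say for $I=\bd_\kappa$) is kept inside a single piece at every node along $s$, then $\bigcap_n Y_{s\restr n}$ still contains $C$ minus countably many points and is therefore $I$-positive, so \emph{Choose} defeats your strategy by following that branch. Any scheme that separates only one pair per node per level is doomed for the same reason.

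The repair is to separate $\kappa$-many elements at every single round, and this is what the paper's proof does. Write $X=\bigsqcup_{i<\omega}X_i$ with each $X_i\in I^+$ (each then has size $\kappa$, since $I\supseteq\bd_\kappa$ is ${<}\kappa$-complete). At round $n$, \emph{Cut} plays a disjoint partition $\langle A^n_\alpha\mid\alpha<\kappa\rangle$ of $X$ into $I$-positive sets such that each $A^n_\alpha$ contains \emph{exactly one} element of $X_n$; the $I$-positivity of the pieces is secured not by $X_n$ but by arranging $A^n_\alpha\cap X_m\in I^+$ for all $m>n$ (partition each later $X_m$ into $\kappa$-many disjoint $I$-positive pieces using the hypothesis and distribute them among the $A^n_\alpha$). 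Whatever \emph{Choose} picks at each round, the final intersection meets each $X_i$ in at most one point, so it is countable and hence in $I$. Your tree-and-path-map picture is compatible with this argument, but the ``one scheduled pair per node'' device must be replaced by the ``one surviving point of $X_n$ per piece at round $n$'' device.
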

\begin{proof}
  Write $X$ as a disjoint union of $I$-positive sets $X_i$ for $i<\omega$. At any stage $n<\omega$, let \emph{Cut} play a disjoint partition $\langle A^n_\alpha\mid\alpha<\kappa\rangle$ of $X$ into $I$-positive sets such that each $A^n_\alpha$ contains exactly one element of $X_n$, and such that $A^n_\alpha\cap X_m\in I^+$ whenever $m>n$.\footnote{Since $X_n\cap A^n_\alpha\in I$ for every $\alpha<\kappa$, the partition $\langle A^n_\alpha\mid\alpha<\kappa\rangle$ is not maximal.} \emph{Choose} has to pick some $B^n=A^n_\alpha$. Let $A^n_\alpha\cap X_n=\{\alpha_n\}$. 
  Note that the above defines a strategy for \emph{Cut} which ensures that for any $i<\omega$, $X_i\cap\bigcap_{n<\omega}B^n$ contains at most one element, and hence the intersection $\bigcap_{n<\omega}B^n$ of choices of \emph{Choose} is countable, showing this strategy to be a winning strategy for \emph{Cut}, as desired.\footnote{With a little more effort, it is in fact possible to provide a strategy for \emph{Cut} which ensures that the intersection of choices of \emph{Choose} is empty. This makes use of the fact that for any ordinal~$\alpha$, \emph{Cut} can play to ensure that in order to have a chance of winning, \emph{Choose} has to decide for a set that does not contain $\alpha$ as an element within a finite number of moves.}
\end{proof}

We will need the following.

\begin{definition}
  Let $I$ be a monotone family on a regular and uncountable cardinal~$\kappa$.
  \begin{itemize}
    \item If $X\in I^+$, then an \emph{$I$-partition} of $X$ is a maximal collection $W\subseteq\Pow(X)\cap I^+$ so that $A\cap B\in I$ whenever $A,B\in W$ are distinct.
    \item An $I$-partition $W$ is \emph{disjoint} if any two of its distinct elements are.
  \end{itemize}
\end{definition}

In the light of the above, we now define cut and choose games in which \emph{Cut} can cut into $\kappa$-many, or even more pieces in each of their moves as follows.

%

\begin{definition}\label{definition:Ulamwithlargestcuts}
    Let $\kappa$ be a regular uncountable cardinal, let $I$ be a monotone family on $\kappa$, let $\gamma<\kappa$ be a limit ordinal, let $X\in I^+$, and let $\nu$ be a cardinal, or $\nu=\infty$.
    \begin{itemize}
      \item $\GG_\nu(X,I,\gamma)$ denotes the variant of the game $\UU_\nu(X,I,\gamma)$ where in each move, \emph{Cut} may play an $I$-partition of size at most $\nu$ of $X$, or of arbitrary size if $\nu=\infty$, and \emph{Choose} has to pick one of its elements. \emph{Choose} wins in case the intersection of all of their choices is $I$-positive, and \emph{Cut} wins otherwise.
      \item In a similar fashion (using $I$-partitions rather than disjoint partitions), we also define games $\GG_\nu(X,I,{\le}\gamma)$ and $\GG_\nu(X,I,{<}\gamma)$ as variants of $\UU_\nu(X,I,{\le}\gamma)$ and $\UU_\nu(X,I,{<}\gamma)$, allowing also for $\gamma=\kappa$ for the latter.
      \item If $\nu$ is a cardinal, we also define games $\GG_{{<}\nu}(X,I,\gamma)$, $\GG_{{<}\nu}(X,I,{\le}\gamma)$ and $\GG_{{<}\nu}(X,I,{<}\gamma)$ in the obvious way.
    \end{itemize}
\end{definition}

By the below observation, these games actually generalize the games that we introduced in Definition \ref{definition:Ulamwithlargecuts} above.

\begin{observation}\label{observation:stargeneralizes}
  If $I$ is a monotone family, then the $\UU$-games introduced in Definition \ref{definition:Ulamwithlargecuts} are equivalent to their corresponding $\GG$-games introduced in Definition~\ref{definition:Ulamwithlargestcuts}, that is, for any choice of parameters $X$, $I$, $\nu$ and $\gamma$ that are suitable for Definition \ref{definition:Ulamwithlargecuts}, the games $\GG_\nu(X,I,\gamma)$ and $\UU_\nu(X,I,\gamma)$ are equivalent, the games $\GG_\nu(X,I,{\le}\gamma)$ and $\UU_\nu(X,I,{\le}\gamma)$ are equivalent etc. 
\end{observation}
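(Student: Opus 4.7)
The plan is to prove equivalence by translating winning strategies between $\UU_\nu$-games and $\GG_\nu$-games via two natural operations on partitions. The first is \emph{disjointification}: given an $I$-partition $W=\langle A_i\mid i<\mu\rangle$ of some $Y\in I^+$ with $\mu\le\nu$, set $A_i^*=A_i\setminus\bigcup_{j<i}A_j$ and $A^\infty=Y\setminus\bigcup_i A_i$. The family $\{A_i^*\mid i<\mu\}\cup\{A^\infty\}$ is a disjoint partition of $Y$ with $A_i^*\subseteq A_i$ for each $i$, and $A^\infty\in I$ by maximality of $W$ as an $I$-partition. We may absorb $A^\infty$ into $A_0^*$ to keep the size at $\mu\le\nu$. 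The second operation is \emph{purification}: given a disjoint partition $P=\langle B_i\mid i<\mu\rangle$ of $Y$ (with some $B_i$ possibly in $I$), the subfamily $P^+$ of $I$-positive pieces is a pairwise disjoint collection of $I$-positive sets, and it is a maximal $I$-partition of $Y$ under the standing small-union completeness hypotheses on $I$: any $A\in I^+$ with $A\cap B\in I$ for all $B\in P^+$ would also satisfy $A\cap B\subseteq B\in I$ for $B\in P\setminus P^+$, so $A=\bigsqcup_B(A\cap B)$ would be a $\le\mu$-union of $I$-sets, forcing $A\in I$.

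For the direction \emph{Choose} winning $\UU_\nu$ implies $\GG_\nu$, starting from a winning $\UU_\nu$-strategy $\sigma$ we define a $\GG_\nu$-strategy as follows: when \emph{Cut} plays an $I$-partition $W$, disjointify to $W^*$, query $\sigma$ on $W^*$, obtain some $A_{i_0}^*$, and actually play $A_{i_0}$ in the $\GG_\nu$-game. Since $A_{i_0}^*\subseteq A_{i_0}$, the intersection of actual $\GG_\nu$-choices contains the intersection of simulated $\UU_\nu$-choices; the latter is $I$-positive by $\sigma$ being winning, and hence so is the former by monotonicity of $I$ (i.e.\ closure of $I^+$ under supersets). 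The reverse direction uses purification: from a winning $\GG_\nu$-strategy $\tau$, define a $\UU_\nu$-strategy that purifies each disjoint partition $P$ to an $I$-partition $P^+$, queries $\tau$, and plays $\tau$'s choice, which lies in $P^+\subseteq P$. Here the intersection of actual $\UU_\nu$-choices coincides with that of the simulated $\GG_\nu$-choices, which is $I$-positive. Strategies for \emph{Cut} are translated symmetrically: disjointification converts a $\GG_\nu$-strategy for \emph{Cut} into a $\UU_\nu$-strategy, and purification goes the other way. The variants $\UU_\nu(X,I,{\le}\gamma)$ vs.\ $\GG_\nu(X,I,{\le}\gamma)$ and $\UU_\nu(X,I,{<}\gamma)$ vs.\ $\GG_\nu(X,I,{<}\gamma)$, and the ${<}\nu$ versions, are treated identically, since both conversions preserve cardinality bounds on partitions and control the round-by-round $I$-positivity of intersections.

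The main obstacle will be verifying the maximality clause in the purification step, which is what ensures $P^+$ is a legal input for a $\GG_\nu$-strategy $\tau$. Under the running hypothesis that $I$ is a ${<}\kappa$-complete ideal on $\kappa$ with $\nu<\kappa$, this is immediate from the computation above. In the fully general monotone-family setting one extends $P^+$ to a maximal $I$-partition $P^{++}$ by Zorn's lemma; since every new piece $C\in P^{++}\setminus P^+$ has $C\cap B\in I$ for each $B\in P^+$, it is $I$-equivalent to $C\setminus\bigcup P^+$, and one then shows that $\tau$'s choice on $P^{++}$ can be transported back into $P^+$ without changing the $I$-measure of the accumulating intersection, so the argument goes through unchanged.
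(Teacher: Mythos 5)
Your overall plan --- simulating one game inside the other by disjointifying $I$-partitions in one direction and discarding null pieces from disjoint partitions in the other --- is the same as the paper's, but the execution has a genuine gap at the central step. After you absorb $A^\infty=Y\setminus\bigcup_i A_i$ into $A_0^*$, the resulting piece equals $A_0\cup A^\infty$ and is \emph{not} a subset of $A_0$, so the assertion ``since $A_{i_0}^*\subseteq A_{i_0}$, the intersection of actual $\GG_\nu$-choices contains the intersection of simulated $\UU_\nu$-choices'' fails whenever the simulated strategy selects the absorbed piece: the simulated intersection may contain points of $\bigcup_{\alpha<\gamma}A^{\alpha,\infty}$ that lie in no $A_{i_0(\alpha)}$. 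For a ${<}\kappa$-complete ideal this is repairable, since $\bigcup_{\alpha<\gamma}A^{\alpha,\infty}\in I$ can be subtracted from an $I$-positive set without leaving $I^+$; but for a general monotone family a $\gamma$-union of $I$-sets need not be in $I$, and the conclusion simply does not follow. The paper handles exactly this point by having \emph{Cut} spend \emph{two} moves of the $\UU$-game per round of the $\GG$-game: first the full disjointification $W_\alpha'$, then the split $\langle\bigcup W_\alpha,\,X\setminus\bigcup W_\alpha\rangle$ (whose second piece is in $I$ by maximality of $W_\alpha$, so \emph{Choose} must take $\bigcup W_\alpha$); this forces the simulated intersection to be $\bigcap_{\alpha}(Y_\alpha\cap\bigcup W_\alpha)$, which genuinely is contained in $\bigcap_\alpha X_\alpha$. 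You need this device, or you must keep $A^\infty$ as a separate piece --- which then violates the size bound when $\nu$ is finite --- for the argument to close. The same defect recurs in your symmetric treatment of \emph{Cut}'s strategies.

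The purification step has a second, related problem. You correctly note that maximality of $P^+$ requires closure of $I$ under unions of size $\le\mu$, but the proposed repair for general monotone families is not an argument: if $\tau$ responds to the Zorn extension $P^{++}$ with a new piece $C\in P^{++}\setminus P^+$, then $C$ is almost disjoint from every element of $P^+$ and contained in no element of the original partition $P$, so there is nothing to ``transport'' it to, and the accumulating intersections of the two runs come apart. The paper sidesteps this by invoking the remarks after Definition \ref{definition:Ulamwithlargecuts} (which do use ${<}\kappa$-completeness) to assume that \emph{Cut}'s disjoint partitions consist of $I$-positive pieces, whence they are automatically maximal $I$-partitions and this direction becomes immediate.
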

\begin{proof}
  We only treat the equivalence between games of the form $\GG_\nu(X,I,\gamma)$ and $\UU_\nu(X,I,\gamma)$ when $\nu<\kappa$ is a cardinal and $\gamma<\kappa$ is a limit ordinal, for the other equivalences are analogous. Making use of the comments after Definition \ref{definition:Ulamwithlargecuts}, if \emph{Cut} wins $\UU_\nu(X,I,\gamma)$, then \emph{Cut} wins $\GG_\nu(X,I,\gamma)$, because every disjoint partition of an $I$-positive set $X$ into less than $\kappa$-many $I$-positive sets is an $I$-partition of $X$. Analogously, if \emph{Choose} wins $\GG_\nu(X,I,\gamma)$, then \emph{Choose} wins $\UU_\nu(X,I,\gamma)$.
  
\medskip  

  Given an $I$-partition $W=\{w_\alpha\mid\alpha<\theta\}$ of some set $X\in I^+$, with $\theta\le\kappa$, we call $W'=\{w'_\alpha\mid\alpha<\theta\}$ a \emph{full disjointification} of $W$ in case $w'_0=w_0\cup (X\setminus \bigcup W)$ and $w'_\alpha=w_\alpha\setminus\bigcup_{\bar\alpha<\alpha}w_{\bar\alpha}$ for every nonzero $\alpha<\theta$.\footnote{Note that full disjointifications of an $I$-partition $W$ are not unique -- they are only determined modulo an enumeration of $W$.
In the following, let us fix some canonical choice of full disjointification for all $I$-partitions $W$, and let us refer to those as \emph{the} full disjointification of $W$.} 
Then $W'$ is a partition of $X$ 
and moreover, $w'_\alpha\subseteq w_\alpha$ for every nonzero $\alpha<\theta$. 

  \medskip
  
Suppose that $\sigma$ is a winning strategy for \emph{Cut} in $\GG_\nu(X,I,\gamma)$. 
To define a winning strategy $\tau$ for \emph{Cut} in $\UU_\nu(X,I,\gamma)$, we use an auxiliary run of $\GG_\nu(X,I,\gamma)$ in which \emph{Cut} plays according to $\sigma$. 
Given a move $W_\alpha$ of \emph{Cut} in round $\alpha<\gamma$ in $\GG_\nu(X,I,\gamma)$, we let $\tau$ perform two consecutive moves in $\UU_\nu(X,I,\gamma)$. 
The first one is the full disjointification $W_\alpha'$ of $W_\alpha$. 
The second one splits $X$ into $\bigcup W_\alpha$ and $X\setminus \bigcup W_\alpha$. 
\emph{Choose} first picks an element $Y_\alpha$ of $W_\alpha'$ and then they have to pick $\bigcup W_\alpha$. 
By the definition of full disjointifications, there is some $X_\alpha \in W_\alpha$ with $Y_\alpha\cap\bigcup W_\alpha\subseteq X_\alpha$. 
We let \emph{Choose} play such an $X_\alpha$ in $\GG_\nu(X,I,\gamma)$, and \emph{Cut} again responds in the next round by using $\sigma$. 
Since $\sigma$ is a winning strategy for \emph{Cut}, it follows that $\bigcap_{\alpha<\gamma}(Y_\alpha\cap \bigcup W_\alpha) \subseteq \bigcap_{\alpha<\gamma}X_\alpha\in I$, and therefore that $\tau$ is a winning strategy for \emph{Cut}, as desired. 

\medskip

We now argue that a winning strategy $\sigma$ for \emph{Choose} in $\UU_\nu(X,I,\gamma)$ yields a winning strategy $\tau$ for \emph{Choose} in the game $\GG_\nu(X,I,\gamma)$, making use of an auxiliary run of $\UU_\nu(X,I,\gamma)$ in which \emph{Choose} plays according to $\sigma$. Given a move $W_\alpha$ of \emph{Cut} in the game $\GG_\nu(X,I,\gamma)$, we let \emph{Cut} perform two consecutive moves in the game $\UU_\nu(X,I,\gamma)$: The first one is the full disjointification $W_\alpha'$ of $W_\alpha$, and the second one splits $X$ into $\bigcup W_\alpha$ and $X\setminus\bigcup W_\alpha$. The strategy $\sigma$ will pick some element $Y_\alpha\in W_\alpha'$, and then decides for $\bigcup W_\alpha$. We let the next move of \emph{Choose} according to $\tau$ be some $X_\alpha\in W_\alpha$ for which $Y_\alpha\cap\bigcup W_\alpha\subseteq X_\alpha$.
Since $\sigma$ is a winning strategy for \emph{Choose}, it follows that $\bigcap_{\alpha<\gamma}(Y_\alpha\cap \bigcup W_\alpha) \subseteq \bigcap_{\alpha<\gamma}X_\alpha\in I^+$, and therefore that $\tau$ is a winning strategy for \emph{Choose}, as desired. 
\end{proof}

Up to some point, increasing the possible size $\nu$ of $I$-partitions that \emph{Cut} may play actually does not make a difference (in terms of the existence of winning strategies for either player) for our generalized cut and choose games of the form $\GG_\nu(X,I,\gamma)$ or $\GG_\nu(X,I,{<}\gamma)$. This will folllow as a special case of Theorem \ref{theorem:increasedsplitting} below, noting that if $\theta$ is a cardinal and $I$ is a ${<}\theta^+$-complete ideal on a cardinal $\kappa$, then the partial order $\mathcal P(\kappa)/I$ is a ${<}\theta^+$-complete Boolean algebra.



\section{Poset games and distributivity}\label{section:distributivity}

A very natural further generalization is to consider analogues of the above games played on posets. On Boolean algebras, such games of length $\omega$ were considered by Boban Veličković \cite{boban}, and such games of arbitrary length were considered by Natasha Dobrinen \cite{dobrinen}, who also mentions a generalization to partial orders.
We assume that each poset $\B$ has domain $Q$ and a maximal element $1_\B$.

\begin{definition}\label{definition:UlamonBA}
    If $\B$ is a poset with $X\in Q$, $\gamma$ is a limit ordinal, and $\nu$ is a cardinal, or $\nu=\infty$, $\GG_\nu(X,\B,\gamma)$ denotes the game of length $\gamma$ in which players \emph{Cut} and \emph{Choose} take turns, where in each move, \emph{Cut} plays a maximal antichain of $\B$ below $X$ of size at most $\nu$, or of arbitrary size if $\nu=\infty$, and \emph{Choose} responds by picking one of its elements. \emph{Choose} wins in case the sequence of all of their choices has a lower bound in $\B$, and \emph{Cut} wins otherwise. We also introduce obvious variants with ${<}\nu$ and/or ${<}\gamma$ in place of $\nu$ and~$\gamma$ respectively -- if the final parameter is of the form ${<}\gamma$, we only ask for lower bounds in~$\B$ for all proper inital segments of the sequence of their choices in order for \emph{Choose} to win.
\end{definition}

Let $\kappa$ be a regular uncountable cardinal, and let $I$ be a ${<}\kappa$-complete ideal on $\kappa$. It is easily observed that for $X\in I^+$, any limit ordinal $\gamma<\kappa$, and any cardinal $\nu$, or $\nu=\infty$, the games $\GG_\nu(X,I,\gamma)$ and $\GG_\nu([X]_I,\Pow(\kappa)/I,\gamma)$ are essentially the same game (and are in particular equivalent), as are $\GG_\nu(X,I,{<}\gamma)$ and $\GG_\nu([X]_I,\Pow(\kappa)/I,{<}\gamma)$. But note that Definition \ref{definition:UlamonBA} can also be taken to provide a natural definition of $\GG_\nu(X,I,\gamma)$, and its variants with ${<}\nu$ and/or ${<}\gamma$, which also works for $\gamma\ge\kappa$: We could take them to be $\GG_\nu([X]_I,\Pow(\kappa)/I,\gamma)$ and its variants, and we observe that this corresponds to requiring the existence of an $I$-positive set that is $I$-almost contained in every choice of \emph{Choose} in order for \emph{Choose} to win, rather than an $I$-positive intersection of those choices, in Definition \ref{definition:Ulamwithlargestcuts}.

\medskip

We first want to show a result that we already promised (for games with respect to ideals) in Section \ref{section:generalizations}, namely that up to some point, increasing the possible size of partitions provided by \emph{Cut} still yields equivalent games. Given a cardinal $\theta$, we say that a partial order $\B$ is \emph{${<}\theta$-complete} in case it has suprema and infima for all of its subsets of size less than $\theta$, under the assumption that those subsets have a lower bound for the latter.

\begin{theorem}\label{theorem:increasedsplitting}
  Let $\gamma$ and $\nu$ be cardinals, let $\beta<\gamma$ be a cardinal, let $\B$ be a separative partial order with domain $Q$, and let $q\in Q$.
  \begin{enumerate}
    \item\label{theorem:increasedsplitting 1} If $\B$ is ${<}(\nu^\beta)^+$-complete, then $\GG_\nu(X,\B,\gamma)$ and $\GG_{\nu^{\beta}}(X,\B,\gamma)$ are equivalent, as are $\GG_\nu(X,\B,{<}\gamma)$ and $\GG_{\nu^\beta}(X,\B,{<}\gamma)$.\footnote{Note that if for some $\beta<\gamma$, we have $\nu^\beta=\nu^{<\gamma}$, then this means that the games $\GG_\nu(X,\B,\gamma)$ and $\GG_{\nu^{<\gamma}}(X,\B,\gamma)$ are equivalent. This is the case in particular if $\gamma=\beta^+$ is a successor cardinal.}
    \item 
    \label{theorem:increasedsplitting 2}
    If $\gamma$ is a limit cardinal, $\nu^\delta<\nu^{<\gamma}$ whenever $\delta<\gamma$,\footnote{By \eqref{theorem:increasedsplitting 1}, we do not need this assumption in case $\B$ is ${<}(\nu^{<\gamma})^+$-complete.} and $\B$ is ${<}(\nu^{<\gamma})$-complete, then $\GG_\nu(X,\B,\gamma)$ and $\GG_{<(\nu^{<\gamma})}(X,\B,\gamma)$ are equivalent, as are the games $\GG_\nu(X,\B,{<}\gamma)$ and $\GG_{<(\nu^{<\gamma})}(X,\B,{<}\gamma)$.
    \item 
        \label{theorem:increasedsplitting 3}
        In particular, if $\gamma$ is a strong limit cardinal, and $\B$ is ${<}\gamma$-complete, then the games $\GG_2(X,\B,{<}\gamma)$ and $\GG_{{<}\gamma}(X,\B,{<}\gamma)$ are equivalent.
\end{enumerate}
\end{theorem}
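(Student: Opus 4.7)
The plan is to simulate a single ``fat'' move of \emph{Cut} by a block of $\beta$-many ``thin'' ones, using the completeness of $\B$ to take the needed infima along branches of a refinement tree. One direction of each equivalence is immediate: any maximal antichain of size ${\le}\nu$ is also of size ${\le}\nu^\beta$, so any winning strategy for \emph{Choose} in $\GG_{\nu^\beta}(X,\B,\gamma)$ is by restriction a winning strategy in $\GG_\nu(X,\B,\gamma)$, and dually any winning strategy for \emph{Cut} in $\GG_\nu(X,\B,\gamma)$ induces one in $\GG_{\nu^\beta}(X,\B,\gamma)$. So the work is in showing the reverse inclusions.

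For the nontrivial direction of \eqref{theorem:increasedsplitting 1}, the main construction is, given a maximal antichain $A$ of size ${\le}\nu^\beta$ below some $q\in Q$, a labelled $\nu$-branching tree $T_A$ of height $\beta+1$ satisfying: (i) the root is labelled by $q$; (ii) at each non-leaf node of level $\alpha<\beta$ labelled by $p$, the labels of its immediate successors form a maximal antichain in $\B$ of size ${\le}\nu$ below $p$; (iii) at each limit level $\alpha\le\beta$, a node's label is the infimum in $\B$ of the labels along its branch, which exists by ${<}(\nu^\beta)^+$-completeness since the branch has size ${\le}\beta+1\le\nu^\beta$; and (iv) the nonzero labels at level $\beta$ are exactly the elements of $A$. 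Such a tree can be built recursively using separativity of $\B$ to refine and the enumeration $A$ to guide which branches survive. Then a run of $\GG_{\nu^\beta}(X,\B,\gamma)$ translates to a run of $\GG_\nu(X,\B,\gamma)$ of length $\beta\cdot\gamma=\gamma$ (ordinal product, which collapses to $\gamma$ since $\beta<\gamma$ and $\gamma$ is a cardinal) by replacing each fat move $A^\rho$ by the level-by-level refinement antichains of $T_{A^\rho}$; conversely, a $\beta$-block of responses by \emph{Choose} in $\GG_\nu$ picks out a branch of $T_{A^\rho}$ and hence a unique element of $A^\rho$. By (iii)--(iv), a lower bound to \emph{Choose}'s choices exists in one game iff it does in the other, and the same translation handles the ${<}\gamma$ variant since every proper initial segment of the simulated run has length less than $\gamma$.

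For \eqref{theorem:increasedsplitting 2}, a move of \emph{Cut} in $\GG_{{<}\nu^{<\gamma}}(X,\B,\gamma)$ is an antichain of size strictly less than $\nu^{<\gamma}$, and by the standing hypothesis that $\nu^\delta<\nu^{<\gamma}$ for every $\delta<\gamma$, this antichain has size ${\le}\nu^\delta$ for some $\delta<\gamma$. Apply \eqref{theorem:increasedsplitting 1} move-by-move with $\beta=\delta$; the total length after $\gamma$ outer rounds is bounded by $\gamma\cdot\gamma=\gamma$ in ordinal arithmetic, so the resulting game sits inside $\GG_\nu(X,\B,\gamma)$ and the translation of strategies is as before. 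Part \eqref{theorem:increasedsplitting 3} is immediate from \eqref{theorem:increasedsplitting 2} applied with $\nu=2$: if $\gamma$ is a strong limit cardinal then $2^\delta<\gamma$ for all $\delta<\gamma$ and $2^{<\gamma}=\gamma$, so the hypotheses of \eqref{theorem:increasedsplitting 2} hold and $\GG_2(X,\B,{<}\gamma)\equiv\GG_{{<}\gamma}(X,\B,{<}\gamma)$.

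The main technical obstacle will be verifying the correctness of the refinement tree $T_A$: one must ensure that the limit-level labels are honest infima (handled by ${<}(\nu^\beta)^+$-completeness) and that no ``extra'' branches end at nonzero leaves not in $A$ (handled by killing unused branches at each successor level via separativity and the enumeration of $A$). A secondary issue is the interaction of limit stages of the outer game with block decomposition: at an outer limit one needs the infimum of all previous outer choices to coincide with the infimum of all previous inner choices, and this is precisely ensured by clause (iii) of the construction.
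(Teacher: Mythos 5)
Your high-level plan --- simulate one move of \emph{Cut} on a $\le\nu^\beta$-sized antichain by a block of $\beta$-many moves on $\le\nu$-sized antichains, observe that the converse directions are trivial, and check the length bookkeeping $\beta\cdot\gamma=\gamma$ --- is exactly the plan of the paper's proof, and your reductions of \eqref{theorem:increasedsplitting 2} and \eqref{theorem:increasedsplitting 3} to \eqref{theorem:increasedsplitting 1} are correct. The implementation via the refinement tree $T_A$, however, has a genuine gap. In the game $\GG_\nu(X,\B,\gamma)$ of Definition~\ref{definition:UlamonBA}, every move of \emph{Cut} must be a maximal antichain of $\B$ \emph{below the fixed condition $X$}, of size at most $\nu$; the game is deliberately not of refinement type. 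The local antichains of $T_A$ (the immediate successors of the node \emph{Choose} has reached, which is what a block of responses ``picking out a branch'' requires you to play) are maximal only below that node, not below $X$, so they are not legal moves. The alternative of playing the full level-$\alpha$ antichain of $T_A$ restores maximality below $X$ but breaks the size bound: level $\alpha$ has up to $\nu^{|\alpha|}$ nodes, which exceeds $\nu$ already at level $2$ when $\nu$ is finite (the situation of \eqref{theorem:increasedsplitting 3}, where $\nu=2$) and at infinite levels in general. Repairing the local antichain by adjoining a complement of the current node below $X$ is cheap in a Boolean algebra but unavailable in a mere separative poset, and the cumulative repair (replace the current node in the previously played antichain by its children) accumulates $\nu\cdot|\lambda|$-many discarded pieces by a limit stage $\lambda$ of the block, again violating the size bound when $|\lambda|>\nu$.

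The paper's decomposition sidesteps all of this, and it is where the completeness and separativity hypotheses are actually used: index the large antichain as $W=\{x_r\mid r\in{}^\beta\nu\}$ and let the $i$-th small move be the coordinate projection $W_i=\{w_i^j\mid j<\nu\}$ with $w_i^j=\sup\{x_r\mid r(i)=j\}$. Each $W_i$ is a maximal antichain below $X$ of size at most $\nu$ (the suprema exist by ${<}(\nu^\beta)^+$-completeness), and separativity yields $x_r=\inf\{w_i^{r(i)}\mid i<\beta\}$, so a block of responses $\langle w_i^{r(i)}\mid i<\beta\rangle$ determines $x_r\in W$ and lower bounds transfer between the two runs in both directions. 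Your tree is, in effect, the family of partial infima $\inf\{w_i^{r(i)}\mid i<\alpha\}$ of these coordinate antichains; but it is the coordinate antichains themselves, not the tree levels, that constitute legal moves. If you replace your clauses (i)--(iv) by this coordinate construction, the remainder of your argument, including the limit-stage bookkeeping and the treatment of the ${<}\gamma$-variants, goes through.
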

\begin{proof}
  The idea of the arguments for the above is that we may simulate a single move of \emph{Cut}, in the games where they are allowed to play larger antichains, by less than $\gamma$-many moves in the corresponding games where they are only allowed to play antichains of size at most $\nu$ in each of their moves. 
  Let us go through some of the details of one of those equivalences in somewhat more detail. For example, let us assume that in \eqref{theorem:increasedsplitting 1}, \emph{Cut} has a winning strategy in the game $\GG_{\nu^\beta}(X,\B,\gamma)$. Assume that in one of their moves, they play a maximal antichain of $\B$ below $X$ of the form $W=\{x_r\mid r\in {}^\beta\nu\}$. Let \emph{Cut} make $\beta$-many moves in the game $\GG_\nu(X,\B,\gamma)$, playing maximal antichains $W_i$ below $X$ for $i<\beta$, with $W_i=\{w_i^j\mid j<\nu\}$ such that $w_i^j=\sup\{x_r\mid r(i)=j\}$ for all $j<\nu$. Let $r\colon\beta\to\nu$ be such that \emph{Choose} picks $w_i^{r(i)}$ in their $i^\textrm{th}$ move, for each $i<\beta$. Now clearly $x_r\le w_i^{r(i)}$ for each $i<\beta$, hence $x_r\le\inf\{w_i^{r(i)}\mid i<\beta\}$. Note that for $r\ne r'\in{}^\beta\nu$, $\inf\{w_i^{r(i)}\mid i<\beta\}$ and $\inf\{w_i^{r'(i)}\mid i<\beta\}$ are incompatible, and hence the collection of these infima for different $r'\in{}^\beta\nu$ forms a maximal antichain of $\B$ below $X$. Thus, by the separativity of $\B$, it follows that in fact $x_r=\inf\{w_i^{r(i)}\mid i<\beta\}$. Let \emph{Choose} respond to $W$ by picking $x_r\in W$. \emph{Cut} will win this run of the game $\GG_{\nu^\beta}(X,\B,\gamma)$ for they are using their winning strategy, but then they will also win the above run of $\GG_{\nu}(X,\B,\gamma)$, for the responses of \emph{Choose} in this run will be cofinal in the sequence of their corresponding responses in the run of $\GG_{\nu^\beta}(X,\B,\gamma)$, and thus the set of responses of \emph{Choose} in either game will not have a lower bound. We have thus produced a winning strategy for \emph{Cut} in the game $\GG_\nu(X,I,\gamma)$ in this way, as desired.
  
  The remaining arguments for \eqref{theorem:increasedsplitting 1} are very similar to the above. Item \eqref{theorem:increasedsplitting 2} follows directly from the argument for \eqref{theorem:increasedsplitting 1}, and \eqref{theorem:increasedsplitting 3} is an immediate consequence of \eqref{theorem:increasedsplitting 2}.
\end{proof}

\medskip

Let us recall and introduce two notions of distributivity for posets.

\begin{definition}[Distributivity]\label{definition:distributivity}
  Let $\B$ be a poset with underlying set $Q$, let $\gamma$ be a limit ordinal and let~$\nu$ be a regular cardinal, or $\nu=\infty$.
  \begin{itemize}
    \item For $X\in Q$, $\B$ is \emph{uniformly $({<}\gamma,\nu)$-distributive with respect to $X$} if whenever $\langle W_\alpha\mid\alpha<\gamma\rangle$ is a sequence of maximal antichains of $\B$ below $X$, each of size at most $\nu$, or of arbitrary size in case $\nu=\infty$, then there is a sequence $\langle X_\alpha\mid\alpha<\gamma\rangle$ of conditions so that for each $\alpha<\gamma$, $X_\alpha\in W_\alpha$ and the sequence  $\langle X_\beta\mid\beta<\alpha\rangle$ has a lower bound in $\B$.     
    We call such a sequence $\langle X_\alpha\mid\alpha<\gamma\rangle$ a \emph{branch through $\langle W_\alpha\mid\alpha<\gamma\rangle$}.
    \item The poset $\B$ is \emph{uniformly $({<}\gamma,\nu)$-distributive} if it is uniformly $({<}\gamma,\nu)$-distributive with respect to $X$ for every $X\in Q$.
    \item For $X\in Q$, $\B$ is \emph{$(\gamma,\nu)$-distributive with respect to $X$} if whenever $\langle W_\alpha\mid\alpha<\gamma\rangle$ is a sequence of maximal antichains of $\B$ below $X$, each of size at most~$\nu$, or of arbitrary size in case $\nu=\infty$, then there is a sequence $\langle X_\alpha\mid\alpha<\gamma\rangle$ so that for each $\alpha<\gamma$, $X_\alpha\in W_\alpha$ and $\{X_\alpha\mid\alpha<\gamma\}$ has a lower bound in $\B$. We call such a sequence $\langle X_\alpha\mid\alpha<\gamma\rangle$ a \emph{positive branch through $\langle W_\alpha\mid\alpha<\gamma\rangle$}.
    \item The poset $\B$ is \emph{$(\gamma,\nu)$-distributive} if it is $(\gamma,\nu)$-distributive with respect to~$X$ for every $X\in Q$.\footnote{Note that there are two further common versions of $(\gamma,\nu)$-distributivity, either in terms of being able to swap the order of certain infinitary conjunctions of disjunctions, or in terms of not adding new functions from $\gamma$ to $\nu$ when forcing, see for example \cite[Page 12]{jech}. The former is easily seen to lead to an equivalent notion in the case of Boolean algebras, and the same is true for the latter in the case of complete Boolean algebras.}
    \item Let $I$ be an ideal on a regular and uncountable cardinal $\kappa$. We say that $I$ is \emph{$(\gamma,\nu)$-distributive} or \emph{uniformly $({<}\gamma,\nu)$-distributive} if the poset $\Pow(\kappa)/I$ is.
  \end{itemize}
\end{definition}

For complete Boolean algebras $\B$, it is easy to see that $(\gamma,\nu)$-distributivity implies $(\gamma,\nu^\gamma)$-distributivity, 
since adding no new functions from $\gamma$ to $\nu$ by forcing with $\B$ is clearly equivalent to adding no new functions from $\gamma$ to $\nu^\gamma$.\footnote{This was observed for $\nu=2$ in \cite[Lemma 1.60]{Sobot}.} 
The following is a version of this observation with weaker completeness assumptions that seems to require a different kind of argument. This lemma and its proof are closely related to Theorem~\ref{theorem:increasedsplitting}.

\begin{lemma}
\label{lemma: eq distributive} 
If $\B$ is a $(\delta,\nu)$-distributive poset, where $\delta$, $\nu$ and $\gamma\leq\delta$ are cardinals, then the following statements hold: 
    \begin{enumerate} 
    \setcounter{enumii}{1} 
      \item 
      \label{lemma: eq distributive 1} 
      If $\B$ is ${<}(\nu^\gamma)^+$-complete, then $\B$ is $(\delta,\nu^\gamma)$-distributive.   
      
	\item 
	\label{lemma: eq distributive 2} 
	If $\B$ is ${<}(\nu^{<\gamma})^+$-complete, then $\B$ is $(\delta,\nu^{<\gamma})$-distributive. 
	
	\item 
	\label{lemma: eq distributive 3} 
	If $\B$ is a ${<}(\nu^{<\gamma})$-complete Boolean algebra and $\nu^\beta<\nu^\gamma$ for all $\beta<\gamma$, then $\B$ is $(\delta,\nu^{<\gamma})$-distributive. 
	
      \end{enumerate} 
      In analogy to the above, uniform $({<}\delta,\nu)$-distributivity implies higher levels of uniform distributivity as well. 
      \end{lemma}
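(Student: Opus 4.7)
The plan is to emulate the refactoring technique from the proof of Theorem~\ref{theorem:increasedsplitting} in the (strategy-free) distributivity setting. For part~(\ref{lemma: eq distributive 1}), suppose $\langle W_\alpha\mid\alpha<\delta\rangle$ is a sequence of maximal antichains of $\B$ below a given $X\in Q$, each of size at most $\nu^\gamma$. Enumerate each $W_\alpha=\{x^\alpha_r\mid r\in{}^\gamma\nu\}$ (with repetitions if necessary), and for each $\alpha<\delta$ and $i<\gamma$, set $w^{\alpha,i}_j=\sup\{x^\alpha_r\mid r(i)=j\}$ for $j<\nu$. These suprema exist by ${<}(\nu^\gamma)^+$-completeness, and $W_\alpha^i=\{w^{\alpha,i}_j\mid j<\nu\}$ is a maximal antichain below $X$ of size at most $\nu$.

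Next, concatenate the double sequence $\langle W_\alpha^i\mid\alpha<\delta,\,i<\gamma\rangle$ in lexicographic order, obtaining a sequence of $\delta\cdot\gamma=\delta$ many maximal antichains below $X$, each of size at most $\nu$. By $(\delta,\nu)$-distributivity applied to this sequence, one obtains a positive branch which, for each $\alpha<\delta$, identifies some $r_\alpha\in{}^\gamma\nu$ such that the element picked from $W_\alpha^i$ is $w^{\alpha,i}_{r_\alpha(i)}$; moreover, the collection of all the picked elements has a common lower bound in $\B$. Arguing as in Theorem~\ref{theorem:increasedsplitting} (using separativity to conclude $x^\alpha_{r_\alpha}=\inf_{i<\gamma}w^{\alpha,i}_{r_\alpha(i)}$), the sequence $\langle x^\alpha_{r_\alpha}\mid\alpha<\delta\rangle$ is then a positive branch through $\langle W_\alpha\mid\alpha<\delta\rangle$, proving (\ref{lemma: eq distributive 1}).

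Part~(\ref{lemma: eq distributive 2}) is handled in the same way, replacing ${}^\gamma\nu$ by (a suitable subset of) ${}^{<\gamma}\nu$: each $W_\alpha$ is refactored into ${<}\gamma$ many antichains of size at most $\nu$, and ${<}(\nu^{<\gamma})^+$-completeness supplies exactly the suprema that appear. Part~(\ref{lemma: eq distributive 3}) follows by combining the argument of~(\ref{lemma: eq distributive 2}) with the Boolean-algebra infinite distributive law, which under the given cardinal arithmetic $\nu^\beta<\nu^\gamma$ for $\beta<\gamma$ and ${<}(\nu^{<\gamma})$-completeness lets one break any antichain of size $\nu^{<\gamma}$ into $<\gamma$ many antichains of strictly smaller size, inductively reducing to the case already covered. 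The uniform variant requires no extra argument: the positive branch extracted above is a branch in the sense of Definition~\ref{definition:distributivity}, since restricting the above construction to any initial segment $\langle W_\alpha\mid\alpha<\bar\delta\rangle$ with $\bar\delta<\delta$ yields a branch through it.

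The main obstacle I anticipate is ensuring that the refactored $W_\alpha^i$ really are maximal antichains of the correct size: the $w^{\alpha,i}_j$ for distinct $j$ must be pairwise incompatible and their join must equal $X$ in the separative sense, which is exactly where completeness is used (and, in non-Boolean settings, where separativity must be invoked to convert the bound on the suprema into a bound on the original $x^\alpha_r$). The bookkeeping in~(\ref{lemma: eq distributive 2}) and~(\ref{lemma: eq distributive 3}) also requires some care in selecting an appropriate indexing of each $W_\alpha$ by ${}^{<\gamma}\nu$ when $|W_\alpha|=\nu^{<\gamma}$ is an unattained supremum.
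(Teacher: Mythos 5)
Your proposal is correct and follows essentially the same route as the paper: each antichain of size at most $\nu^\gamma$ (resp.\ $\nu^{<\gamma}$) is refactored coordinate-wise into at most $\gamma$-many maximal antichains of size at most $\nu$ exactly as in Theorem~\ref{theorem:increasedsplitting}, the combined length-$\delta$ sequence is fed to $(\delta,\nu)$-distributivity, and the lower bound of the resulting positive branch is pulled back through the separativity argument, with parts (2) and (3) handled by splitting along a cofinal sequence in $\gamma$ and completing the leftover pieces via the available suprema (resp.\ Boolean complementation). The one point to tidy is the ``with repetitions'' enumeration of $W_\alpha$ by ${}^\gamma\nu$: an arbitrary surjective indexing can make the fibers $\{x^\alpha_r\mid r(i)=j\}$ overlap as sets of conditions, destroying the pairwise incompatibility of the $w^{\alpha,i}_j$, so one should enumerate injectively over a subset of ${}^\gamma\nu$ and discard empty fibers, as the paper implicitly does.
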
 
\begin{proof} 
\eqref{lemma: eq distributive 1}: 
Suppose that $\langle W^j \mid j<\delta\rangle$ is a sequence of maximal antichains in~$\B$, each of size ${\leq}\nu^\gamma$. 
For each $j<\delta$, we define $\langle W^j_i \mid i<\gamma\rangle$ as in the proof of Theorem \ref{theorem:increasedsplitting}. 
Since $\B$ is $(\delta,\nu)$-distributive, there exists a positive branch through $\langle W^j_i \mid {\prec} i,j {\succ} \in\gamma\rangle$ with a lower bound $p$, where ${\prec} i,j {\succ}$ denotes the standard pairing function applied to $i$ and $j$. 
As in the proof of Theorem \ref{theorem:increasedsplitting}, $p$ induces a positive branch through $\langle W^j \mid j<\delta\rangle$.

\eqref{lemma: eq distributive 2}: 
Suppose that $\langle W^j \mid j<\delta\rangle$ is a sequence of maximal antichains in $\B$, each of size ${\leq}\nu^{<\gamma}$. 
Fix a cofinal sequence $\langle \gamma_i \mid i<\cof(\gamma) \rangle$ in $\gamma$. 
For each $j<\delta$, we partition $W^j$ into subsets $\langle W^{j,i}\mid i<\cof(\gamma)\rangle$ such that $W^{j,i}$ has size ${\leq}\nu^{\gamma_i}$ for each $i<\cof(\gamma)$. 
We can extend each $W^{j,i}$ to a maximal antichain $\bar{W}^{j,i}$ by adding a single condition, namely $\sup(W^j\setminus W^{j,i})$, since $\B$ is ${<}(\nu^{<\gamma})^+$-complete.  
As in the proof of \eqref{lemma: eq distributive 1}, we then replace each $\bar{W}^{j,i}$ by a sequence $\langle \bar{W}^{j,i}_k \mid k<\gamma_i\rangle$ such that $\bar{W}^{j,i}_k$ has size ${\leq}\nu$. 
Let $\langle\tilde W_l\mid l<\delta\rangle$ enumerate all the $\bar W^{j,i}_k$ in order-type $\delta$.
Since $\B$ is $(\delta,\nu)$-distributive, 
there exists a positive branch through $\langle \tilde W_l \mid l<\delta\rangle$. This is easily seen to induce a positive branch through $\langle W^j \mid j<\delta \rangle$, as required. 

\eqref{lemma: eq distributive 3}: 
We proceed as in the proof of \eqref{lemma: eq distributive 2}, except when $W^{j,i}$ is extended to a maximal antichain $\bar{W}^{j,i}$: 
Since $|W^{j,i}| \leq \nu^{\gamma_i}<\nu^{<\gamma}$ and $\B$ is ${<}(\nu^{<\gamma})$-complete, $\sup(W^{j,i})$ exists, and thus, using that $\B$ is a Boolean algebra, also $\sup(W^j\setminus W^{j,i})=\neg\sup(W^{j,i})$ exists. 
\end{proof}

For Boolean algebras, the case $\gamma=\omega$ in \eqref{theorem:ulamanddistributivity 1} and \eqref{theorem:ulamanddistributivity 2} below was proved by 
Thomas Jech in \cite[Theorem~2]{jech} (and \eqref{theorem:ulamanddistributivity 3} is nontrivial only for uncountable $\gamma$). 
A more general result for arbitrary cardinals $\gamma$ was then 
shown by 
Dobrinen in \cite[Theorem~1.4]{dobrinen}. In the theorem below, \eqref{theorem:ulamanddistributivity 1} and \eqref{theorem:ulamanddistributivity 2}--\eqref{theorem:ulamanddistributivity 2b} are essentially due to Dobrinen. We will present a somewhat different and simpler argument for these, and furthermore present additional results which partially answer a question of Dobrinen \cite[paragraph after Theorem 1.4]{dobrinen} by showing in \eqref{theorem:ulamanddistributivity 2d} that a ${<}(\nu^{<\gamma})^+$-complete Boolean algebra $\B$ is $(\gamma,\nu)$-distributive if and only if \emph{Cut} does not have a winning strategy in the game $\GG_\nu(X,\B,\gamma)$.

\begin{theorem}\label{theorem:ulamanddistributivity}
  Let $\B$ be a poset, $\gamma$ a limit ordinal, $\nu$ a regular cardinal or $\nu=\infty$, and $X\in Q$.
  Then, the following hold.
  \begin{enumerate}
    \item 
    \label{theorem:ulamanddistributivity 1}
    If \emph{Cut} does not have a winning strategy in the game $\GG_\nu(X,\B,\gamma)$, then $\B$ is $(\gamma,\nu)$-distributive with respect to $X$.
    
    \item 
    \label{theorem:ulamanddistributivity 2}
    If $\B$ is $(\gamma,\nu)$-distributive with respect to $X$ and either 
    \begin{enumerate} 
    \item 
    \label{theorem:ulamanddistributivity 2a}
    $\nu=\infty$, 
    \end{enumerate} 
    or $\gamma$ is a cardinal and either 
    
    \begin{enumerate} 
    \setcounter{enumii}{1} 
    \item 
    \label{theorem:ulamanddistributivity 2b}
    $\nu^{<\gamma}=\gamma$, 
    
    \item 
    \label{theorem:ulamanddistributivity 2c}
    $\nu^{<\gamma}=\nu$ and $\B$ is ${<}\gamma$-complete,
    
     \item 
     \label{theorem:ulamanddistributivity 2d}
     $\B$ is ${<}(\nu^{<\gamma})^+$-complete, or 
     
    \item 
    \label{theorem:ulamanddistributivity 2e}
    $\nu^\beta<\nu^\gamma$ for all $\beta<\gamma$ and 
     $\B$ is a ${<}(\nu^{<\gamma})$-complete Boolean algebra, 
 
     \end{enumerate} 
     then \emph{Cut} does not have a winning strategy in the game $\GG_\nu(X,\B,\gamma)$.

    \item \label{theorem:ulamanddistributivity 3}
    Both \eqref{theorem:ulamanddistributivity 1} and \eqref{theorem:ulamanddistributivity 2} hold for $\GG_\nu(X,\B,{<}\gamma)$ and uniform $({<}\gamma,\nu)$-distributivity as well, in the obvious sense.
  \end{enumerate}
\end{theorem}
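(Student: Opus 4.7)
The plan is to prove (1) by contrapositive and (2) by case analysis, with (3) obtained by obvious modifications for the $<\gamma$-variant.

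For (1), suppose $\B$ fails to be $(\gamma,\nu)$-distributive with respect to $X$, witnessed by a sequence $\langle W_\alpha : \alpha<\gamma \rangle$ of maximal antichains of $\B$ below $X$ of the appropriate size through which no positive branch exists. Then Cut's ``blind'' strategy that plays $W_\alpha$ at round $\alpha$ regardless of Choose's previous responses is winning: any run produces a branch $\langle X_\alpha \rangle$ through $\langle W_\alpha \rangle$, which by assumption admits no lower bound.

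For (2), assume toward contradiction that Cut has a winning strategy $\sigma$. The heart of the argument is the construction, for each $\alpha<\gamma$, of the ``canonical'' maximal antichain
\[
  W^\sigma_\alpha = \Bigl\{ \bigwedge_{\beta \le \alpha} X_\beta : \langle X_\beta : \beta \le \alpha\rangle \text{ is a play of length $\alpha+1$ consistent with $\sigma$ with nonzero meet}\Bigr\}.
\]
Each $W^\sigma_\alpha$ has size at most $\nu^{|\alpha|+1} \le \nu^{<\gamma}$, and the required meets exist under the completeness hypotheses of (2c), (2d), (2e). A routine argument shows each $W^\sigma_\alpha$ is a maximal antichain below $X$, and that $W^\sigma_{\alpha+1}$ refines $W^\sigma_\alpha$. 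Invoking the appropriate item of Lemma~\ref{lemma: eq distributive} to boost $(\gamma,\nu)$-distributivity to $(\gamma,\nu^{<\gamma})$-distributivity then yields a positive branch $\langle Y_\alpha : \alpha<\gamma\rangle$ through $\langle W^\sigma_\alpha\rangle$ with common lower bound $b\ne 0$. The refinement and antichain structure force the $Y_\alpha$ to form a decreasing sequence each of which codes a unique play consistent with $\sigma$ of length $\alpha+1$; these cohere into a full play of length $\gamma$ against $\sigma$ for which $b$ is a lower bound for all of Choose's moves, contradicting the assumption on $\sigma$.

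Cases (2a) and (2b) require alternative approaches since no completeness is available. For (2b), the hypothesis $\nu^{<\gamma} = \gamma$ bounds the number of partial plays consistent with $\sigma$ by $\gamma$, so I would enumerate them as $\langle p_\xi : \xi<\gamma\rangle$, apply $(\gamma,\nu)$-distributivity directly to $\langle \sigma(p_\xi) : \xi<\gamma \rangle$, and inductively read off a play against $\sigma$ in which Choose always responds with the value $Z_\xi$ from the positive branch associated with the current partial play; since $b \le Z_\xi$ for all $\xi$, $b$ witnesses Choose's win. For (2a) with $\nu=\infty$, one argues similarly by forming a suitable $\gamma$-indexed sequence of unrestricted-size antichains capturing all partial plays at each level and extracting a play from the positive branch, exploiting the absence of any size constraint in $(\gamma,\infty)$-distributivity. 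Part (3) follows by entirely analogous arguments for the $\GG_\nu(X,\B,{<}\gamma)$ variant, using uniform $({<}\gamma,\nu)$-distributivity and noting that the construction only requires coherence of proper initial segments. The main obstacle throughout is the tension between the size bound $\nu$ on antichains Cut may play and the size of the canonical antichains $W^\sigma_\alpha$ arising from the strategy tree (up to $\nu^{<\gamma}$); Lemma~\ref{lemma: eq distributive} is exactly what reconciles these in each sub-case, while (2a) is the most delicate as it cannot invoke any completeness hypothesis and must rely on the full strength of arbitrary-size distributivity.
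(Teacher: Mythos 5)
Your proposal is correct and follows essentially the same route as the paper: the blind-strategy contrapositive for \eqref{theorem:ulamanddistributivity 1}, the canonical strategy-derived antichains of meets combined with Lemma~\ref{lemma: eq distributive} for \eqref{theorem:ulamanddistributivity 2c}--\eqref{theorem:ulamanddistributivity 2e}, the direct enumeration of all $\sigma$-consistent moves of \emph{Cut} for \eqref{theorem:ulamanddistributivity 2b}, and the antichain-of-lower-bounds workaround at limit stages for \eqref{theorem:ulamanddistributivity 2a}. The only cosmetic difference is that the paper derives \eqref{theorem:ulamanddistributivity 2d} and \eqref{theorem:ulamanddistributivity 2e} by first passing to $\GG_{\nu^{<\gamma}}(X,\B,\gamma)$ and invoking \eqref{theorem:ulamanddistributivity 2c}, whereas you boost the distributivity first and build the large antichains directly --- contrapositive reformulations of the same argument.
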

\begin{proof}
\eqref{theorem:ulamanddistributivity 1}: 
Any sequence of maximal antichains of $\B$ below $X$, each of size at most~$\nu$ (or of arbitrary size in case $\nu=\infty$), witnessing $\B$ to not be $(\gamma,\nu)$-distributive with respect to $X$ can be identified with a strategy for \emph{Cut} in the game $\GG_\nu(X,\B,\gamma)$, and the nonexistence of a positive branch through such a sequence corresponds to the fact that \emph{Choose} cannot win against this strategy, which means that it is in fact a winning strategy, as desired.

\eqref{theorem:ulamanddistributivity 2}: 
Assume that $\B$ is $(\gamma,\nu)$-distributive with respect to $X$. Assume for a contradiction that \emph{Cut} did have a winning strategy $\sigma$ in the game $\GG_\nu(X,\B,\gamma)$.
    
\eqref{theorem:ulamanddistributivity 2a} will be verified within the proof of \eqref{theorem:ulamanddistributivity 2c}. 

    If $\nu^{<\gamma}=\gamma$ in \eqref{theorem:ulamanddistributivity 2b}, then we can construct a $\gamma$-sequence of maximal antichains of $\B$ below $X$, each of size at most $\nu$, consisting of all moves of \emph{Cut} that could possibly come up in any run of $\GG_\nu(X,\B,\gamma)$ in which they follow their winning strategy~$\sigma$, which is a total of $\nu^{<\gamma}=\gamma$-many maximal antichains of $\B$ below $X$, and use $(\gamma,\nu)$-distributivity with respect to $X$ to obtain a positive branch through these, which yields a way for \emph{Choose} to win while \emph{Cut} is following their supposed winning strategy, which is a contradiction.    
    
   If $\nu^{<\gamma}=\nu$ in \eqref{theorem:ulamanddistributivity 2c} (this is also the case if $\nu=\infty$), we inductively construct a $\gamma$-sequence $\langle W_\alpha\mid\alpha<\gamma\rangle$ of maximal antichains of $\B$ below~$X$, each of size at most~$\nu$ (or of arbitrary size in case $\nu=\infty$), as follows: Let~$W_0$ be the first move of \emph{Cut} according to $\sigma$. Given $W_\alpha$, and a possible choice $x$ of \emph{Choose} in their $\alpha^\textrm{th}$ move in a run of the game $\GG_\nu(X,\B,\gamma)$ in which \emph{Cut} plays according to their winning strategy~$\sigma$, let $X_\alpha$ be the set of all such $x$, and let $Y_x$ be the response of $\sigma$ to $x$ being chosen at stage~$\alpha$. Let $W_{\alpha+1}=\{x\,\land\,y\mid x\in X_\alpha\,\land\,y\in Y_x\}$. For limit ordinals~$\alpha$, define~$W_\alpha$ similarly, letting $X_\alpha$ be the set of nonzero greatest lower bounds of the possible first $\alpha$-many choices of \emph{Choose} in runs of the game $\GG_\nu(X,\B,\gamma)$ in which \emph{Cut} plays according to $\sigma$ (if $\nu=\infty$, we let $X_\alpha$ be a maximal antichain of $\B$ of elements below sequences of possible first $\alpha$-many choices of \emph{Choose}, allowing us to drop the completeness assumption on $\B$). Note that by our assumption that $\nu^{<\gamma}=\nu$, these antichains will always have size at most~$\nu$. Use $(\gamma,\nu)$-distributivity with respect to~$X$ to obtain a positive branch through the sequence of $W_\alpha$'s, which yields a way for \emph{Choose} to win while \emph{Cut} is following their supposed winning strategy, which is a contradiction.
   
      Suppose that \emph{Cut} has a winning strategy in the game $\GG_\nu(X,\B,\gamma)$ in either \eqref{theorem:ulamanddistributivity 2d} or  \eqref{theorem:ulamanddistributivity 2e}. 
   Then they also have a winning strategy in $\GG_{\nu^{<\gamma}}(X,\B,\gamma)$. 
   Since $(\nu^{<\gamma})^{<\gamma}=\nu^{<\gamma}$, \eqref{theorem:ulamanddistributivity 2c} shows that $\B$ is not $(\gamma,\nu^{<\gamma})$-distributive. 
   Then $\B$ is not $(\gamma,\nu)$-distributive by Lemma \ref{lemma: eq distributive}  \eqref{lemma: eq distributive 2} or \eqref{lemma: eq distributive 3} for $\alpha=\gamma$. 
    
    \eqref{theorem:ulamanddistributivity 3} follows by exactly the same arguments as \eqref{theorem:ulamanddistributivity 1} and \eqref{theorem:ulamanddistributivity 2} using the instances of Lemma \ref{lemma: eq distributive} about uniform distributivity. 
\end{proof}


\begin{definition}\label{definition:idealdistributivity}
  Let $I$ be an ideal on a regular and uncountable cardinal $\kappa$. Let $\gamma$ be a limit ordinal and let $\nu$ be a regular cardinal, or $\nu=\infty$. 
  $I$ is \emph{$({\le}\gamma,\nu)$-distributive} if whenever $X\in I^+$ and $\langle W_\alpha\mid\alpha<\gamma\rangle$ is a sequence of $I$-partitions of $X$, each of size at most $\nu$, or of arbitrary size in case $\nu=\infty$, then there is a sequence $\langle X_\alpha\mid\alpha<\gamma\rangle$ so that for each $\alpha<\gamma$, $X_\alpha\in W_\alpha$ and for every $\delta<\gamma$, $\bigcap_{\epsilon<\delta}X_\epsilon\in I^+$, and $\bigcap\{X_\alpha\mid\alpha<\gamma\}\ne\emptyset$ in the above. We call such a sequence $\langle X_\alpha\mid\alpha<\gamma\rangle$ a \emph{(nonempty) branch through $\langle W_\alpha\mid\alpha<\gamma\rangle$}.
\end{definition}

The proof of the following theorem essentially proceeds like the proof of Theorem~\ref{theorem:ulamanddistributivity} \eqref{theorem:ulamanddistributivity 1} and \eqref{theorem:ulamanddistributivity 2a}-\eqref{theorem:ulamanddistributivity 2c}, and we will thus omit presenting the argument.

\begin{theorem}\label{theorem:ulamandidealdistributivity}
  Let $I$ be an ideal on a regular and uncountable cardinal $\kappa$, let $\nu$ be a cardinal or $\nu=\infty$, and let $X\in I^+$.
  \begin{enumerate}
    \item 
       \label{theorem:ulamandidealdistributivity 1}
            if $\gamma<\kappa$ is a limit ordinal and \emph{Cut} does not have a winning strategy in the game $\GG_\nu(X,I,{\le}\gamma)$, then $I$ is $({\le}\gamma,\nu)$-distributive with respect to $X$.
        \item 
        \label{theorem:ulamandidealdistributivity 2}
        If $I$ is ${<}\gamma$-complete and $({\le}\gamma,\nu)$-distributive with respect to $X$, and either $\nu=\infty$, $\gamma$ is a cardinal and $\nu^{<\gamma}=\gamma$, or $\nu^{<\gamma}=\nu$, then \emph{Cut} does not have a winning strategy in the game $\GG_\nu(X,I,{\le}\gamma)$.
  \end{enumerate}
\end{theorem}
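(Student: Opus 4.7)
The proof proceeds in parallel with the proof of Theorem \ref{theorem:ulamanddistributivity} \eqref{theorem:ulamanddistributivity 1} and \eqref{theorem:ulamanddistributivity 2a}--\eqref{theorem:ulamanddistributivity 2c}, substituting $I$-partitions of $X$ for maximal antichains below $X$ and ``$I$-positive partial intersections together with nonempty final intersection'' for ``positive branch'' (nonzero lower bound) in accordance with the winning condition of $\GG_\nu(X,I,{\le}\gamma)$.

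For \eqref{theorem:ulamandidealdistributivity 1}, a witness $\langle W_\alpha \mid \alpha<\gamma\rangle$ to the failure of $({\le}\gamma,\nu)$-distributivity of $I$ with respect to $X$ gives rise to a strategy for \emph{Cut}: at round $\alpha$, \emph{Cut} plays $W_\alpha$ regardless of \emph{Choose}'s previous moves. The hypothesis that no branch exists through $\langle W_\alpha \mid \alpha<\gamma\rangle$ means that no \emph{Choose}-response sequence can yield $I$-positive partial intersections throughout and a nonempty final intersection, so this is a winning strategy for \emph{Cut}.

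For \eqref{theorem:ulamandidealdistributivity 2}, I would argue the contrapositive. Assuming \emph{Cut} has a winning strategy $\sigma$, I would inductively build $I$-partitions $W_\alpha$ of $X$ for $\alpha<\gamma$, each of size at most $\nu$, forming a refining sequence (each $W_{\alpha'}$ refines $W_\alpha$ modulo $I$ when $\alpha<\alpha'$) that encodes all plays consistent with $\sigma$. Set $W_0 = \sigma(\emptyset)$. At successor stages $\alpha+1$, for each ``alive'' play history ending with $w \in W_\alpha$, the strategy returns an $I$-partition $\sigma(w)$ of $X$, and one forms $W_{\alpha+1}$ from all $I$-positive sets of the form $w \cap y$, $y \in \sigma(w)$. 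At limit stages $\alpha<\gamma$, one collects the $I$-positive intersections of coherent play histories of length $\alpha$, invoking the ${<}\gamma$-completeness of $I$ to verify this still forms an $I$-partition of $X$. The three size-bound cases proceed exactly as in \eqref{theorem:ulamanddistributivity 2a}--\eqref{theorem:ulamanddistributivity 2c} of the earlier theorem: no size bound if $\nu=\infty$; direct enumeration of all $\sigma$-consistent plays in $\gamma$ many steps if $\nu^{<\gamma}=\gamma$; and the inductive count $|W_\alpha| \le \nu^{|\alpha|+1} \le \nu$ if $\nu^{<\gamma}=\nu$. Applying $({\le}\gamma,\nu)$-distributivity of $I$ with respect to $X$ to $\langle W_\alpha \mid \alpha<\gamma\rangle$ yields a branch $\langle X_\alpha \mid \alpha<\gamma\rangle$ whose final intersection contains some $\xi \in X$; by the refining property, $\xi$ singles out a coherent \emph{Choose}-response sequence against $\sigma$, all of whose partial intersections contain $\xi$, giving a win for \emph{Choose} against $\sigma$, which is the desired contradiction.

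The main technical obstacle is the limit construction: the family of intersections of coherent histories at stage $\alpha$ need not \emph{a priori} be a maximal, hence genuine, $I$-partition of $X$. This is precisely where the ${<}\gamma$-completeness of $I$ is invoked. Since each $W_\beta$ for $\beta<\alpha$ covers $X$ modulo an element of $I$, and there are fewer than $\gamma$ such $\beta$, the ``exceptional'' subset of $X$ missed by the coherent histories is a ${<}\gamma$-union of sets in $I$ and hence itself in $I$; so after augmenting by an $I$-partition of this exceptional set the family at stage $\alpha$ becomes an $I$-partition of $X$ without disturbing the size bound. The remainder of the argument then follows the template of the proof of Theorem \ref{theorem:ulamanddistributivity}.
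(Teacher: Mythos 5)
Part \eqref{theorem:ulamandidealdistributivity 1} of your argument is correct and is exactly the intended adaptation of Theorem \ref{theorem:ulamanddistributivity} \eqref{theorem:ulamanddistributivity 1}: a sequence of $I$-partitions with no branch is literally a history-independent winning strategy for \emph{Cut}. For part \eqref{theorem:ulamandidealdistributivity 2} your template is also the intended one, and you are right that the crux is whether the family collected at a limit stage $\alpha$ is a genuine $I$-partition of $X$. But your resolution of that crux is flawed. You claim the exceptional set is contained in $\bigcup_{\beta<\alpha}(X\setminus\bigcup W_\beta)$ and hence lies in $I$ by ${<}\gamma$-completeness. That containment is false: a point $\xi$ can belong to $\bigcup W_\beta$ for every $\beta<\alpha$ and still fail to lie in any \emph{alive} branch intersection, namely when every coherent history through $\xi$ has \emph{died} (its intersection has dropped into $I$) by stage $\alpha$. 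There are up to $\nu^{|\alpha|}$-many dead histories, each contributing a set in $I$, and $\nu^{|\alpha|}$ typically far exceeds the additivity of $I$ (which is only assumed to be ${<}\gamma$-complete), so the union of these traces --- and hence the exceptional set --- can perfectly well be $I$-positive. For a concrete picture, take $I$ normal on $\kappa$ and arrange the first $\omega$ partitions so that exactly one branch has $I$-positive intersection $S$ while $X\setminus S\in I^+$ is exhausted by the continuum-many dead branches; then at stage $\omega$ your family $\{S\}$ is very far from maximal and the exceptional set is $I$-positive.

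Even granting that the exceptional set $E_\alpha$ may be $I$-positive and must be covered, two further points are left unaddressed and both matter. First, adjoining an $I$-partition of $E_\alpha$ need not restore maximality in $X$: an $I$-positive $A$ with $A\cap E_\alpha\in I$ and $A\cap A_h\in I$ for every alive history $h$ is only seen to be a union of $|P_\alpha|$-many $I$-sets, which is not a contradiction when $|P_\alpha|$ exceeds the additivity of $I$. The cleaner move is to adjoin, via Zorn's lemma, a maximal family of $I$-positive subsets of $X$ that are pairwise $I$-almost disjoint and $I$-almost disjoint from \emph{every} alive branch intersection $A_h$; maximality of the resulting $W_\alpha$ is then immediate. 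Second, and most importantly, once auxiliary sets are adjoined you must prove that the branch $\langle X_\beta\mid\beta<\gamma\rangle$ supplied by $({\le}\gamma,\nu)$-distributivity never passes through them --- otherwise it does not encode a run against $\sigma$ and no contradiction is obtained. This is where the ${\le}\gamma$-winning condition earns its keep: at a limit stage $\alpha$ the branch satisfies $\bigcap_{\beta<\alpha}X_\beta\in I^+$, this intersection is contained in $A_h$ for the limit history $h$ determined so far, so $h$ is alive and $X_\alpha$ has $I$-positive intersection with $A_h$, which rules out every element of $W_\alpha$ except those of the form $A_h\cap y$ with $y\in\sigma(h)$. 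Finally, the adjoined family can be large, so your assertion that the size bound $\le\nu$ is undisturbed also needs an argument in the cases $\nu^{<\gamma}=\gamma$ and $\nu^{<\gamma}=\nu$ (for $\nu=\infty$ there is nothing to check). As it stands, the limit-stage step --- the one place where this theorem genuinely differs from the $\gamma=\omega$ case --- is not established.
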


Recall that a nonprincipal ideal $I$ is \emph{precipitous} if its generic ultrapower is forced to be wellfounded. 
It is a well-known standard result that (in our above terminology) an ideal $I$ is precipitous if and only if it is $({\le}\omega,\infty)$-distributive (see for example \cite[Lemma 22.19]{MR1940513}). It thus follows by the above that precipitousness of an ideal can be described via the non-existence of winning strategies for \emph{Cut} in suitable cut and choose games. We will say more about the relationship between precipitousness and cut and choose games in Section \ref{section:ulamandprecipitous} below.

With respect to Footnote \ref{footnote:WC}, let us also remark that a ${<}\kappa$-complete ideal $I\supseteq\bd_\kappa$ is a \emph{WC ideal} (as defined by Johnson in \cite{MR853844}) if and only if $I$ is uniformly $({<}\kappa,\kappa)$-distributive.

\section{Banach-Mazur games and strategic closure}\label{section:ulamandprecipitous}

%

In this section, we want to show how winning strategies for Banach-Mazur games on partial orders relate to winning strategies for certain cut and choose games.

\begin{definition}[Banach-Mazur games]
Let $\kappa$ be a regular uncountable cardinal, let $I$ be a monotone family on $\kappa$, let $\gamma$ be a limit ordinal, and let $\B$ be a poset with domain $Q$.
  \begin{itemize}
      \item If $\gamma<\kappa$, let $\PP(I,\gamma)$ denote the following game of length $\gamma$. Two players, \emph{Empty} and \emph{Nonempty} take turns to play $I$-positive sets, forming a $\subseteq$-decreasing sequence, with \emph{Empty} starting the game and with \emph{Nonempty} playing first at each limit stage of the game. If at any limit stage $\delta<\gamma$, \emph{Nonempty} cannot make a valid move, then \emph{Empty} wins and the game ends. \emph{Nonempty} wins if the game proceeds for $\gamma$-many rounds and the intersection of the sets that were played is nonempty. Otherwise, \emph{Empty} wins.
    \item $\PP(\B,\gamma)$ denotes the following game of length $\gamma$ on the poset $\B$. Two players, \emph{Empty} and \emph{Nonempty} take turns to play elements of $Q$, forming a $\le$-decreasing sequence, with \emph{Empty} starting the game and with \emph{Nonempty} playing first at each limit stage of the game. If at any limit stage $\delta<\gamma$, \emph{Nonempty} cannot make a valid move, then \emph{Empty} wins and the game ends. \emph{Nonempty} wins if the game proceeds for $\gamma$-many rounds and the collection of the sets that were played has a lower bound in $Q$.
    \item We let $\PP^+(I,\gamma)$ denote the game $\PP(\Pow(\kappa)/I,\gamma)$.
  \end{itemize}
\end{definition}

Clearly, if \emph{Nonempty} has a winning strategy in a game $\PP^+(I,\gamma)$ for some $\gamma<\kappa$ and $I$ contains all singletons, then the same strategy makes them win $\PP(I,\gamma)$. Let us observe that by a similar proof to that of Observation \ref{observation:cutwinsulam}, it easily follows that if $\kappa\le 2^\gamma$, then \emph{Empty} has a winning strategy in the game $\PP^+(I,\gamma)$ for any monotone family $I$ on $\kappa$ that contains all singletons. Note that an ideal $I$ is \emph{precipitous} if and only if \emph{Empty} has no winning strategy in the game $\PP(I,\omega)$, and that a poset $\B$ is ${<}\gamma^+$-strategically closed\footnote{See \cite[Definition 5.15]{MR2768691}. Note that any $(\gamma+1)$-strategically closed poset is already ${<}\gamma^+$-strategically closed by an easy inductive argument.} if and only if \emph{Nonempty} has a winning strategy in the game $\PP(\B,\gamma)$. We recall a classical result, which is verified when $\kappa=\omega_1$ as \cite[Theorem 4]{gjm}, and it is easy to see that the proof of \cite[Theorem~4]{gjm} in fact shows the following, replacing~$\omega_1$ by an arbitrary regular and uncountable cardinal $\kappa$. This parallels Observation \ref{observation:genericallyulam} and the comments preceding it.

\begin{theorem}[Galvin, Jech, Magidor]\label{theorem:gjm}
  Let $\kappa$ be a regular and uncountable cardinal, and let $\gamma<\kappa$ be regular. If we L{\'e}vy collapse a measurable cardinal above $\kappa$ to become $\kappa^+$, then in the generic extension, there is a uniform normal ideal $I$ on $\kappa^+$ such that \emph{Nonempty} has a winning stategy in the game $\PP^+(I,\gamma)$.
\end{theorem}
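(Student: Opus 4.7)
Let $\theta > \kappa$ be the measurable cardinal above $\kappa$ in $V$, with normal ultrafilter $U$ and associated ultrapower embedding $j: V \to M$; in particular $M^\theta \cap V \subseteq M$. Let $P = \Coll(\kappa, {<}\theta)$, and let $G$ be $V$-generic for $P$; in $V[G]$, $\theta = (\kappa^+)^{V[G]}$. In the standard way, $j(P)$ factors as $P * \dot Q$, where $\dot Q$ is a $P$-name for $\Coll(\kappa, [\theta, j(\theta)))^{M[G]}$. Using closure of $M$ together with the ${<}\kappa$-closure of $P$, one verifies that $Q := \dot Q^G$ is ${<}\kappa$-closed in $V[G]$, and hence ${<}\gamma^+$-closed since $\gamma < \kappa$.

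For any $V[G]$-generic $H \subseteq Q$, the composition $G * H$ is $V$-generic for $j(P)$, and the standard lifting argument yields an elementary embedding $j^+ : V[G] \to M[G][H]$ in $V[G][H]$ with $\crit(j^+) = \theta$. In $V[G]$, let $\dot U^*$ be the canonical $Q$-name for $\{A \in \Pow(\theta)^{V[G]} : \theta \in j^+(A)\}$; this is forced to be a $V[G]$-normal, uniform ultrafilter on $\theta$ extending $U$. Define the hopeless ideal
\[
I = \{A \subseteq \theta : 1_Q \Vdash_Q \check A \notin \dot U^*\} \in V[G].
\]
Then $I$ is a uniform normal ideal on $\kappa^+$, with $I$-positive sets being exactly those $A$ for which some $q \in Q$ forces $\check A \in \dot U^*$.

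Nonempty's strategy in $\PP^+(I, \gamma)$ mirrors the template of Observation~\ref{observation:genericallyulam}: in parallel to the moves $X_\alpha$ of the game, she maintains a ${\le}$-decreasing sequence $\langle q_\alpha \rangle$ of side conditions in $Q$ with the invariant $q_\alpha \Vdash_Q \check X_\alpha \in \dot U^*$. At a limit stage $\delta < \gamma$, she uses ${<}\gamma^+$-closure of $Q$ to take a lower bound $q_\delta$ of the preceding $q_\alpha$'s, and appeals to the ${<}\theta$-completeness of $\dot U^*$ on $V$-sequences (available via ${<}\kappa$-distributivity of $P$, which allows the relevant ${<}\kappa$-length sequences to be arranged to lie in $V$) to conclude that $q_\delta \Vdash \bigcap_{\alpha<\delta} \check X_\alpha \in \dot U^*$; this intersection is then $I$-positive and serves as her move at stage $\delta$.

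The main technical obstacle lies in the successor case, where given Empty's $I$-positive move $X \le X_\alpha$ in $\Pow(\theta)/I$, Nonempty must simultaneously find $q_{\alpha+1} \le q_\alpha$ in $Q$ and an $I$-positive $X_{\alpha+1} \subseteq X$ with $q_{\alpha+1} \Vdash \check X_{\alpha+1} \in \dot U^*$. This requires combining the ultrafilter property of $\dot U^*$ with a careful coupling between Nonempty's evolving sequence of conditions and the Boolean structure of $\Pow(\theta)/I$, exploiting that the map $[A] \mapsto \|\check A \in \dot U^*\|_{r.o.(Q)}$ locates $\Pow(\theta)/I$ densely within the completion $r.o.(Q)$; this is precisely the successor step from the proof of \cite[Theorem 4]{gjm}, and the argument generalizes verbatim from the case $\kappa = \omega_1$ to arbitrary regular uncountable $\kappa$ and regular $\gamma < \kappa$, yielding the theorem.
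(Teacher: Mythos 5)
Your proposal is correct and follows essentially the same route as the paper, which simply cites \cite[Theorem 4]{gjm} and notes that its proof generalizes verbatim from $\omega_1$ to arbitrary regular uncountable $\kappa$; you reconstruct that argument (lifting $j$ through $\Coll(\kappa,{<}\theta)$, forming the hopeless ideal from $\dot U^*$, and running \emph{Nonempty}'s strategy with side conditions in the ${<}\kappa$-closed quotient $Q$), deferring the successor-step density argument to the same source. The only small inaccuracy is at limit stages: the required completeness of $\dot U^*$ on the sequence $\langle X_\alpha\mid\alpha<\delta\rangle\in V[G]$ follows directly from $\delta<\crit(j^+)=\theta$ and elementarity, with no need to arrange the sequence to lie in $V$.
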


In the following, we want to compare the above games with the cut and choose games from our earlier sections. When $\gamma=\omega$, \eqref{char precipitous 1}  and \eqref{char precipitous 2} below are essentially due to Jech in \cite{jechabstract,jech}. For larger $\gamma$, 
\eqref{char precipitous 2} below follows from \cite[Theorem on Page~718]{foreman} and \cite[Theorem 1.4]{dobrinen} (we presented the latter in Theorem \ref{theorem:ulamanddistributivity}): That is, in his~\cite{foreman}, 
Matthew Foreman showed that \emph{Empty} not winning $\PP(\B,\gamma)$ is equivalent to the $(\gamma,\infty)$-distributivity of $\B$. We will provide an argument that directly connects these types of games.

\begin{theorem}
\label{char precipitous} 
Let $\kappa$ be a regular uncountable cardinal, let $I$ be an ideal on $\kappa$, let $\gamma<\kappa$ be a limit ordinal, and let $\B$ be a poset with domain $Q$. Then, the following hold:
  \begin{enumerate}
    \item \label{char precipitous 1} 
    \emph{Empty} wins $\PP(I,\gamma)$ if and only if \emph{Cut} wins $\GG_\infty(X,I,{\le}\gamma)$ for some $X\in I^+$.
    \item \label{char precipitous 2} 
    \emph{Empty} wins $\PP(\B,\gamma)$ if and only if \emph{Cut} wins $\GG_\infty(X,\B,\gamma)$ for some $X\in Q$.
  \end{enumerate}
\end{theorem}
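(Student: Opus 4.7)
The plan is to give direct translations between strategies in the two games, treating (2) in detail and observing that (1) follows the same template with $I$-partitions replacing maximal antichains, ``$I$-positive intersection'' replacing compatibility, and nonempty (respectively $I$-positive) intersections replacing lower bounds in $\B$.

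For the $(\Leftarrow)$ direction of (2), suppose $\sigma$ is a winning strategy for \emph{Cut} in $\GG_\infty(X,\B,\gamma)$ for some $X\in Q$. I would have \emph{Empty} open by playing $X$ and then maintain, in parallel with the Banach-Mazur game, an auxiliary run of $\GG_\infty(X,\B,\gamma)$ in which \emph{Cut} follows $\sigma$ and the picks of \emph{Choose} are generated from the current $\PP$-play: whenever \emph{Nonempty} has just played some $q$, \emph{Empty} reads off the next antichain $W_\alpha$ dictated by $\sigma$ given the previous Choose picks, finds some $X_\alpha\in W_\alpha$ compatible with $q$ (which exists since $W_\alpha$ is maximal below $X$ and $q\le X$), records $X_\alpha$ as the next Choose pick, and plays a common lower bound of $q$ and $X_\alpha$ in $\PP$. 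The key invariant, verified by induction, is that every \emph{Nonempty} move lies below each previously recorded $X_\beta$. If the $\PP$-play reaches length $\gamma$, so does the auxiliary $\GG_\infty$-run; as $\sigma$ is winning, $\{X_\alpha:\alpha<\gamma\}$ has no lower bound, and by the invariant neither does the set of \emph{Nonempty} moves, so \emph{Empty} wins.

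For the $(\Rightarrow)$ direction, let $\tau$ be \emph{Empty}'s winning strategy and set $X=\tau(\emptyset)$. I would define \emph{Cut}'s strategy in $\GG_\infty(X,\B,\gamma)$ by maintaining an auxiliary $\PP$-play in which \emph{Empty} follows $\tau$ and the Choose picks $X_0,X_1,\ldots$ are successively incorporated as \emph{Empty}'s moves, interleaved with canonically chosen \emph{Nonempty} moves. At round $\alpha$ with history $(X_0,\ldots,X_{\alpha-1})$, setting $X_{-1}:=X$, the set $D_\alpha$ of possible $\tau$-responses after any prospective next \emph{Nonempty} move below $X_{\alpha-1}$ is dense below $X_{\alpha-1}$ (since for $y\le X_{\alpha-1}$, $\tau$ applied to $y$ yields an element of $D_\alpha$ that is $\le y$), so \emph{Cut} plays $W_\alpha$ by taking a maximal antichain $W_\alpha^0\subseteq D_\alpha$ below $X_{\alpha-1}$ and extending it to a maximal antichain of $\B$ below $X$ via a maximal antichain of elements below $X$ incompatible with $X_{\alpha-1}$. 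If \emph{Choose} picks $X_\alpha\in W_\alpha^0$, one identifies $X_\alpha=\tau(\ldots,n_\alpha)$ for a canonical $n_\alpha\le X_{\alpha-1}$ and extends the auxiliary play accordingly; if they pick from the extension, then $X_\alpha\perp X_{\alpha-1}$ and already $\{X_\beta\}_{\beta\le\alpha}$ has no common lower bound, so \emph{Cut} has effectively won and may play arbitrarily thereafter. At a limit $\lambda$, if $\{X_\alpha:\alpha<\lambda\}$ has a common lower bound, pick one canonically to serve as \emph{Nonempty}'s limit move in the auxiliary play and continue; otherwise \emph{Cut} has already won. When the game reaches length $\gamma$, $\tau$ being winning forces the auxiliary $\PP$-sequence to lack a lower bound, so the same holds for $\{X_\alpha:\alpha<\gamma\}$.

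The main obstacle is the limit-stage case analysis in the $(\Rightarrow)$ direction: one must verify that each failure mode of the auxiliary $\PP$-play---either \emph{Nonempty} unable to find a lower bound at some limit, or \emph{Choose} straying into the ``extension'' of $W_\alpha$---translates into $\{X_\beta\}$ lacking a common lower bound, so that any such failure is in fact favorable for \emph{Cut}, together with the bookkeeping needed to ensure that each $W_\alpha$ is a maximal antichain below the fixed $X$ rather than below the current $X_{\alpha-1}$. For (1), the same construction works verbatim after replacing antichains below $X$ by $I$-partitions of $X$, compatibility by ``intersection is $I$-positive'', and common lower bounds by intersections; the requirement in $\GG_\infty(X,I,\le\gamma)$ that the running intersection stay $I$-positive at every $\delta<\gamma$ corresponds exactly to \emph{Nonempty}'s obligation to produce an $I$-positive lower bound at every limit of $\PP(I,\gamma)$, and the final-stage nonempty-intersection conditions agree directly.
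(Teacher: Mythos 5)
Your overall architecture (mutual simulation via dense sets of strategy-responses, extending partial antichains to maximal antichains below the fixed $X$) matches the paper's, and your direction ``\emph{Cut} wins $\Rightarrow$ \emph{Empty} wins'' is fine --- indeed somewhat more direct than the paper's, which routes that implication through the non-distributivity characterization. The gap is in the other direction, at limit rounds (so it only matters for $\gamma>\omega$; for $\gamma=\omega$ your argument is complete). At a limit $\lambda$ you have \emph{Nonempty} commit to a single canonical lower bound $n_\lambda$ of $\{X_\beta\mid\beta<\lambda\}$ before \emph{Cut} plays round $\lambda$, so your antichain $W_\lambda^0$ of $\tau$-responses is maximal only below a single condition ${\le}\,n_\lambda$, and the ``extension'' consists of elements incompatible with that one condition. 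But $\{X_\beta\mid\beta<\lambda\}$ may have many pairwise incompatible lower bounds, and \emph{Choose} can pick an element of the extension lying above a different lower bound $z\perp n_\lambda$. Then their picks still have a common lower bound, so \emph{Cut} has not ``effectively won'', and the auxiliary play cannot be continued either, since that pick is not a $\tau$-response to any legal move. The same problem occurs in the ideal case: picking from the extension only guarantees $X_\lambda\cap n_\lambda\in I$, not $X_\lambda\cap\bigcap_{\beta<\lambda}X_\beta\in I$, so the running intersection need not drop into $I$ and \emph{Choose} has not lost $\GG_\infty(X,I,{\le}\gamma)$.

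The repair is to not pre-commit \emph{Nonempty}'s limit move: at round $\lambda$ the set of legal \emph{Nonempty} moves is the set $L_\lambda$ of all lower bounds of the previous plays (in the ideal case, all $I$-positive subsets of $\bigcap_{\beta<\lambda}X_\beta$), the $\tau$-responses are dense in $L_\lambda$, and \emph{Cut} should play a maximal antichain of $L_\lambda$ contained in this dense set, extended to a maximal antichain below $X$. Then any pick outside this antichain is incompatible with every element of a maximal antichain of $L_\lambda$, hence the picks have no common lower bound and \emph{Cut} has genuinely won, while any pick inside it determines a legal continuation of the auxiliary play. This is exactly what the paper does by taking $\bar W_j$ to be an $I$-partition of $\bigcap_{i<j}x_i$, the full intersection of all previous moves, rather than of a single chosen \emph{Nonempty} move; your successor-stage bookkeeping can be kept as is.
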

\begin{proof}
  Let us provide a proof of \eqref{char precipitous 1}, and remark that \eqref{char precipitous 2} is verified in complete analogy.
  By Theorem \ref{theorem:ulamandidealdistributivity}, \emph{Cut} having a winning strategy in the game $\GG_\infty(X,I,{\le}\gamma)$ is equivalent to $I$ not being $({\le}\gamma,\infty)$-distributive with respect to $X$.
  
  Assuming that $I$ is not $({\le}\gamma,\infty)$-distributive with respect to $X$, we pick a sequence $\langle W_i\mid i<\gamma\rangle$ of $I$-partitions of $X$ witnessing this. Let us describe a winning strategy for \emph{Empty} in the game $\PP(I,\gamma)$. In their first move, let \emph{Empty} play the set $x_0=X$. At any stage $i<\gamma$, given the last move $y\in I^+$ of \emph{Nonempty}, pick $x_i\in W_i$ such that $y\cap x_i\in I^+$, which exists by the maximality of $W_i$. Let \emph{Empty} play $x_i$. It follows that $\bigcap_{i<\gamma}x_i=\emptyset$, as desired.
  
  On the other hand, assume that \emph{Empty} has a winning strategy $\sigma$ in the game $\PP(I,\gamma)$. Let $x_0$ be the first move of \emph{Empty} according to $\sigma$. We will describe a winning strategy for \emph{Cut} in the game $\GG_\infty(x_0,I,{\le}\gamma)$, making use of an auxiliary run of $\PP(I,\gamma)$ according to $\sigma$. 
  Given a play of $\PP(I,\gamma)$ in which the moves of \emph{Empty} are $\langle x_i\mid i<j\rangle$ for some $j<\gamma$, and \emph{Nonempty} is to move next, for every possible next move $q\in I^+$ of \emph{Nonempty}, $\sigma$ has a response $r\subseteq q$ in~$I^+$, which provides us with a dense set $D_j$ of such responses $r$ below $\bigcap_{i<j}x_i$ in $\Pow(\kappa)/I$. Noting that maximal antichains in $\Pow(\kappa)/I$ are exactly $I$-partitions, let $\bar W_j\subseteq D_j$ be an $I$-partition of $\bigcap_{i<j}x_i$. Let \emph{Cut} play an $I$-partition $W_j$ of $x_0$ extending $\bar W_j$ in their $j^{\textrm{th}}$ move. \emph{Choose} will pick some element $w_j\in\bar W_j$, and we let \emph{Nonempty} play some $I$-positive $q_j$ in their next move, such that \emph{Empty} answers this by playing $x_j=w_j$ in their next move, according to $\sigma$.  In this way, all choices of \emph{Choose} are also moves of \emph{Empty}, hence $\bigcap_{i<\gamma}w_i=\emptyset$, since \emph{Empty} is following their winning strategy $\sigma$.
  \end{proof}

The next theorem will show that we in fact obtain instances of equivalent Banach-Mazur games and cut and choose games.
The forward direction when $\gamma=\omega$ in Item~\eqref{theorem:precipitousyieldsulam 2} below is due to Jech in \cite{jech}, and the reverse direction of \eqref{theorem:precipitousyieldsulam 2} for $\gamma=\omega$ is due to Veličković in \cite{boban}. The full proof of \eqref{theorem:precipitousyieldsulam 2} below is due to Dobrinen \cite[Theorem~29]{dobrinen2}.

\begin{theorem}\label{theorem:precipitousyieldsulam}
  Let $\kappa$ be a regular uncountable cardinal, let $I$ be an ideal on $\kappa$, let $\gamma<\kappa$ be a limit ordinal, and let $\B$ be a poset with domain $Q$. Then, the following hold:
  \begin{enumerate}
    \item 
    \label{theorem:precipitousyieldsulam 1}
    \emph{Nonempty} wins $\PP(I,\gamma)$ if and only if \emph{Choose} wins $\GG_\infty(X,I,{\le}\gamma)$ for all $X\in I^+$.
    \item 
    \label{theorem:precipitousyieldsulam 2}
    \emph{Nonempty} wins $\PP(\B,\gamma)$ if and only if \emph{Choose} wins $\GG_\infty(X,\B,\gamma)$ for all $X\in Q$.
  \end{enumerate}
\end{theorem}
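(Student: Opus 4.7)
I focus on (2); the proof of (1) is analogous, replacing lower bounds by $I$-positive intersections and maximal antichains by $I$-partitions, and noting that the ``${\le}\gamma$'' winning condition for Choose matches Nonempty's requirement of playing $I$-positive sets throughout. Both directions mirror Theorem~\ref{char precipitous}, with the roles of the two players swapped.

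For the forward direction of (2), given a winning strategy $\tau$ for Nonempty in $\PP(\B,\gamma)$, I construct Choose's strategy in $\GG_\infty(X,\B,\gamma)$ by maintaining a simulated play of $\PP(\B,\gamma)$ with Empty's initial move set to $X$. When Cut plays a maximal antichain $W_\alpha$ below $X$, the current Nonempty-move $x_\alpha$ is compatible with some $w_\alpha\in W_\alpha$ by maximality; Choose picks such a $w_\alpha$, and in the simulated BM-run Empty then plays a common lower bound $y_\alpha$ of $w_\alpha$ and $x_\alpha$ (valid since $y_\alpha\le x_\alpha$), to which Nonempty responds via $\tau$ with $x_{\alpha+1}\le y_\alpha\le w_\alpha$. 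Inductively $x_\beta\le w_\alpha$ for all $\alpha\le\beta<\gamma$. At limit stages $\delta<\gamma$, $\tau$ provides $x_\delta\le x_\beta$ for all $\beta<\delta$, hence $x_\delta\le w_\alpha$ for all $\alpha<\delta$, and the simulation continues by maximality of $W_\delta$. At stage $\gamma$, the lower bound of the $x_\beta$'s furnished by $\tau$ is simultaneously a lower bound of the $w_\alpha$'s, so Choose wins.

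For the reverse direction of (2), given winning strategies $\sigma_X$ in $\GG_\infty(X,\B,\gamma)$ for every $X\in Q$, I construct Nonempty's strategy. After Empty's first move $e_0$, let $\sigma:=\sigma_{e_0}$ and maintain a simulated CC-play $(W_\alpha,w_\alpha)_{\alpha<\gamma}$ with $w_\alpha=\sigma(W_0,\ldots,W_\alpha)$, having Nonempty play $x_\alpha:=w_\alpha$. Validity requires that $W_\alpha$ is chosen so that $w_\alpha\le e_\alpha$ for Empty's move $e_\alpha$; if this is achieved at every stage, then the $x_\alpha$'s are descending and the winning property of $\sigma$ yields a lower bound of $\{w_\alpha\}=\{x_\alpha\}$ at stage $\gamma$, giving Nonempty's win. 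The hard part will be ensuring $w_\alpha\le e_\alpha$: a naive choice of $W_\alpha$ as a maximal antichain below $e_0$ refining $e_\alpha$ may fail, since $\sigma$ could consistently pick elements incompatible with $e_\alpha$ while still being a winning strategy (its choices forming a compatible chain whose infimum lies in the region incompatible with $e_\alpha$). Resolving this is the main technical obstacle, and is done dually to the density argument in the reverse direction of Theorem~\ref{char precipitous}, exploiting the availability of $\sigma_X$ for every $X\in Q$---in particular allowing transitions to $\sigma_{e_\alpha}$ or analogous strategies at appropriate stages---to obtain enough flexibility in the simulated CC-play to keep each $w_\alpha$ below $e_\alpha$. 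For part~(1), the same plan applies with $I$-partitions and $I$-positivity in place of antichains and lower bounds; the intermediate $I$-positive intersections required of Choose in $\GG_\infty(X,I,{\le}\gamma)$ correspond precisely to Nonempty's $I$-positive plays in $\PP(I,\gamma)$.
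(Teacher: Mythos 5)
Your forward direction is correct and is essentially the paper's argument: simulate the Banach--Mazur game, use maximality of each antichain (respectively $I$-partition) to find a member compatible with Nonempty's current move, feed a common lower bound back to Empty, and observe that the final lower bound for Nonempty's moves also bounds Choose's picks. However, the reverse direction contains a genuine gap: you correctly isolate the crux --- keeping the auxiliary strategy's choices below Empty's moves --- but you do not actually resolve it, and the mechanism you gesture at (switching to $\sigma_{e_\alpha}$ at appropriate stages) does not obviously work. If Empty forces a switch cofinally often in $\gamma$, no single run of any cut-and-choose game is ever completed, so the winning condition of no single $\sigma_X$ can be invoked at the end; and restarting on $e_\alpha$ discards the coherence of the earlier choices.

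The paper's resolution uses only one strategy, namely $\sigma$ for the game below Empty's opening move $x_0$, viewed as a function $F$ sending a sequence of partitions to Choose's next pick. The key lemma, which is what your sketch is missing, is: for the current position $s=\langle W_i\mid i<\delta\rangle$, setting $\Sigma_s=\{F(s^\frown\langle W\rangle)\mid W\text{ an }I\text{-partition of }x_0\}$, there is an $I$-positive $y$ below $\bigcap_{i<\delta}w_i$ such that \emph{every} $I$-positive subset of $y$ lies in $\Sigma_s$. (Otherwise the complement of $\Sigma_s$ is dense below the current position, so some $I$-partition $W$ avoids $\Sigma_s$ entirely, contradicting $F(s^\frown\langle W\rangle)\in W\cap\Sigma_s$.) Nonempty plays this $y$; then whatever Empty plays next is itself a value of $F$, so one can choose Cut's next auxiliary partition to make Choose's response \emph{equal} to Empty's move, not merely below it. Thus Choose's picks coincide with Empty's moves, and the nonempty intersection (respectively lower bound) guaranteed by $\sigma$ transfers directly. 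Without this predensity argument, your plan of ensuring $w_\alpha\le e_\alpha$ remains an unproved assertion, and it is precisely the nontrivial half of the theorem.
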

\begin{proof}
  We provide a proof of \eqref{theorem:precipitousyieldsulam 1}, and remark that \eqref{theorem:precipitousyieldsulam 2} is verified in complete analogy.
  For the forward direction, let $\sigma$ be a winning strategy for \emph{Nonempty} in $\PP(I,\gamma)$, and let $X\in I^+$. We describe a winning strategy for \emph{Choose} in $\GG_\infty(X,I,{\le}\gamma)$, making use of an auxiliary run of $\PP(I,\gamma)$ according to $\sigma$.  
  Suppose that \emph{Cut} starts the game by playing an $I$-partition $W_0$ of $X$. Let \emph{Empty} play $x_0=X$, and let $y_0$ be the response of $\sigma$. Using the maximality of $W_0$, let \emph{Choose} pick $w_0\in W_0$ such that $w_0\cap y_0\in I^+$ as their next move. 
  At any stage $0<i<\gamma$, assume \emph{Cut} plays an $I$-partition $W_i$ of $X$, and let $y_i$ be the last move of \emph{Nonempty} according to $\sigma$. Let \emph{Choose} pick $w_i\in W_i$ such that $w_i\cap y_i\in I^+$ as their next move. Let \emph{Empty} play $w_i\cap y_i$, and let \emph{Nonempty}  respond with $y_{i+1}$ using $\sigma$. At limit stages, let \emph{Nonempty} make a move according to $\sigma$.
Since $y_{i+1}\subseteq w_i$, we have $\bigcap_{i<\gamma}w_i \supseteq \bigcap_{i<\gamma}y_i \neq\emptyset$, showing that we have indeed described a winning strategy for \emph{Choose}, as desired.

 For the reverse direction, suppose that \emph{Empty} starts a run of the game $\PP(I,\gamma)$ by playing some $x_0\in I^+$. Let $\sigma$ be a winning strategy for \emph{Choose} in the game $\GG_\infty(x_0,I,{\le}\gamma)$. We can identify $\sigma$ with a function $F$ which on input $\langle W_i\mid i\le\delta\rangle$ for some $\delta<\gamma$ considers the partial run in which the moves of \emph{Cut} are given by the $W_i$, the moves of \emph{Choose} at stages below $\delta$ are given by the strategy $\sigma$, and $F(\langle W_i\mid i\le\delta\rangle)$ produces a response $w_\delta\in W_\delta$ for \emph{Choose} to this partial run. We describe a winning strategy for \emph{Nonempty} in the game $\PP(I,\gamma)$, making use of an auxiliary run of $\GG_\infty(x_0,I,{\le}\gamma)$ according to $\sigma$.
  
  For the first move, consider the set \[\Sigma_\emptyset=\{F(\langle W\rangle)\mid W\textrm{ is an }I\textrm{-partition of }x_0\},\]
  and note that there is an $I$-positive $y_0\subseteq x_0$ such that $\Pow(y_0)\cap I^+\subseteq\Sigma_\emptyset$, for otherwise the complement of $\Sigma_\emptyset$ is dense in $I^+$ below $x_0$, and hence there is an $I$-partition $W$ of $x_0$ that is disjoint from $\Sigma_\emptyset$, however $F(\langle W\rangle)\in W\cap\Sigma_\emptyset$, which is a contradiction. Let \emph{Nonempty} pick such a $y_0$ as their response to \emph{Empty}'s first move $x_0$.
  
  In the next round, suppose that \emph{Empty} plays $x_1\subseteq y_0$. Let \emph{Cut} play an $I$-partition $W_0$ of $x_0$ such that $F(\langle W_0\rangle)=x_1$ as their first move in the game $\GG_\infty(x_0,I,{\le}\gamma)$. Consider the set \[\Sigma_{\langle W_0\rangle}=\{F(\langle W_0,W \rangle)\mid W\textrm{ is an }I\textrm{-partition of }x_0\}.\]
  As before, there is $y_1\subseteq x_1$ in $I^+$ such that $\Pow(y_1)\cap I^+\subseteq\Sigma_{\langle W_0\rangle}$, and we let \emph{Nonempty} respond with such $y_1$.
  We proceed in the same way at arbitrary successor stages.
  At any limit stage $0<i<\gamma$, let $\vec W=\langle W_j\mid j<i\rangle$, and let \emph{Nonempty} pick $y_i$ such that $\Pow(y_i)\cap I^+\subseteq\Sigma_{\vec W}=\{F(\vec W^\smallfrown \langle W\rangle)\mid W\textrm{ is an }I\textrm{-partition of }x_0\}$ by an argument as above.
  
  In this way, the choices of \emph{Choose} are exactly the choices of \emph{Empty} in the above, and hence their intersection is nonempty, for \emph{Choose} was following their winning strategy $\sigma$. This shows that we have just described a winning strategy for \emph{Nonempty} in the game $\PP(I,\gamma)$, as desired.
\end{proof}

We showed in Theorem \ref{theorem:ulamanddistributivity} that a poset $\B$ is $(\gamma,\infty)$-distributive if and only if for all $X\in Q$, \emph{Cut} does not win the game $\GG_\infty(X,\B,\gamma)$. 
The next characterization follows from Theorem \ref{theorem:precipitousyieldsulam}, since a poset $\B$ is ${<}\gamma^+$-strategically closed (by the very definition of this property) if and only if \emph{Choose} has a winning strategy in $\PP(\B,\gamma)$. 

\begin{corollary} 
\label{strategic closure} 
  Let $\kappa$ be a regular uncountable cardinal, let $I$ be an ideal on $\kappa$, let $\gamma<\kappa$ be a limit ordinal, and let $\B$ be a poset with domain $Q$. Then, $\B$ is ${<}\gamma^+$-strategically closed if and only if \emph{Choose} wins $\GG_\infty(X,\B,\gamma)$ for all $X\in Q$. 
\end{corollary}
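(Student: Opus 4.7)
The plan is to simply chain together two equivalences already in hand, so the proof is essentially a one-step observation. No real obstacle arises, but let me spell out what each step is.

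First, I would recall, as already noted in the paragraph preceding Theorem \ref{theorem:gjm}, that the assertion that $\B$ is ${<}\gamma^+$-strategically closed is just a restatement of the assertion that \emph{Nonempty} has a winning strategy in the Banach-Mazur game $\PP(\B,\gamma)$: in $\PP(\B,\gamma)$, \emph{Empty} chooses an arbitrary opening condition $X\in Q$, the two players alternate producing a $\le$-decreasing sequence with \emph{Nonempty} playing at limits, and \emph{Nonempty} wins precisely when the length-$\gamma$ run admits a lower bound in $\B$. A strategy for \emph{Nonempty} in this game, covering all possible opening moves $X$ of \emph{Empty}, is exactly what is demanded by ${<}\gamma^+$-strategic closure (modulo the footnote observing that $(\gamma+1)$-strategic closure is equivalent to ${<}\gamma^+$-strategic closure).

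Second, I would quote Theorem \ref{theorem:precipitousyieldsulam}(2), which gives the equivalence between \emph{Nonempty} having a winning strategy in $\PP(\B,\gamma)$ and \emph{Choose} having a winning strategy in $\GG_\infty(X,\B,\gamma)$ for every $X\in Q$. Combining these two equivalences yields precisely the statement of the corollary.

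The only mild subtlety to check is that the universal quantification over $X\in Q$ on the cut-and-choose side matches the definition of strategic closure on the Banach-Mazur side. This is clear: \emph{Empty} is free to open $\PP(\B,\gamma)$ with any element of $Q$, so a winning strategy for \emph{Nonempty} must respond successfully regardless of that opening move, which is exactly what Theorem \ref{theorem:precipitousyieldsulam}(2) translates into a family of winning strategies for \emph{Choose} indexed by the possible opening elements $X$.
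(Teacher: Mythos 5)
Your proposal is correct and matches the paper's own argument exactly: the paper likewise notes that ${<}\gamma^+$-strategic closure is, by definition, the assertion that \emph{Nonempty} wins $\PP(\B,\gamma)$, and then invokes Theorem \ref{theorem:precipitousyieldsulam}~\eqref{theorem:precipitousyieldsulam 2} to translate this into \emph{Choose} winning $\GG_\infty(X,\B,\gamma)$ for all $X\in Q$. Nothing further is needed.
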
 

Let us close with some complementary remarks on the games studied in this section. 
We first argue that allowing for arbitrary large partitions is important in the above characterisations of precipitous ideals. For instance, a restriction to partitions of size ${<}\kappa$ does not lead to equivalent games. 
To see this, note that assuming the consistency of a measurable cardinal and picking some regular cardinal $\gamma$ below, it is consistent to have an ideal $I$ on a cardinal $\kappa$ such that \emph{Choose} has a winning strategy in the game $\GG_{{<}\kappa}(\kappa,I,{\le}\gamma)$ for any $\gamma<\kappa$, but $I$ is not precipitous. Simply take $I=\bd_\kappa$ when $\kappa$ is either measurable, or in a model obtained from Theorem \ref{theorem:gjm}. It is well-known that the bounded ideal is never precipitous (see \cite[Page 1]{gjm}).

\medskip

This can also hold for normal ideals. 
For example, work in a model of the form $L[U]$ with a measurable cardinal $\kappa$ with a solitary normal ultrafilter $U$ on $\kappa$. 
Let $J$ be the ideal on $\kappa$ dual to $U$. 
The cardinal $\kappa$ is also completely ineffable, and we let $I$ be the completely ineffable ideal on $\kappa$, as introduced by Johnson in \cite{MR918427} (see also \cite{pp}). 
Since $J$ equals the measurable ideal on $\kappa$, that is defined as the intersection of the complements of all normal ultrafilters on $\kappa$ in \cite{pp}, we have $I\subseteq J$ by \cite[Theorem 1.4 (5) and Theorem 1.5 (11)]{pp}. 
As in Observation \ref{observation:measurable}, \emph{Choose} has a winning strategy in the game $\GG_{{<}\kappa}(\kappa,I,\gamma)$ whenever $\gamma<\kappa$. But by a result of Johnson \cite[Theorem 1.6]{MR918427}, the completely ineffable ideal is never precipitous, that is \emph{Empty} has a winning strategy in $\PP(I,\omega)$.

\medskip

In general, it is harder for \emph{Cut} to win $\GG_\infty(X,I,{\le}\omega)$ than to win $\GG_\infty(X,I,\omega)$. 
To see this, note that for any precipitous ideal $I$ on $\omega_1$, 
\emph{Cut} does not win $\GG_\infty(X,I,{\le}\omega)$ by Theorem \ref{char precipitous} (1). 
However, \emph{Cut} wins $\GG_\omega(X,I,\omega)$ by Observation \ref{observation:generalizedtrivialforcut}. 

\medskip

Building on results of Galvin, Jech and Magidor~\cite{gjm}, Johnson \cite[Theorem 4]{MR853844} shows that for $\kappa<\aleph_\omega$,\footnote{The proof would also work assuming $\lambda^\omega<\kappa$ whenever $\lambda<\kappa$.} if $J\supseteq \bd_\kappa$ is a ${<}\kappa$-complete ideal  on $\kappa$ such that \emph{Nonempty} wins the game $\PP(J,\omega)$, then $J$ is $(\omega,\kappa)$-distributive. 
Let $I$ be a normal precipitous ideal on $\omega_1$.\footnote{Note that the usual construction of a precipitous ideal on $\omega_1$ yields a normal precipitous ideal. See for example \cite[Theorem 22.33]{MR1940513}.}  
Since $I$ is not $(\omega,\omega_1)$-distributive, there exists $X\in I^+$ such that \emph{Choose} does not win $\GG_\infty(X,I,{\le}\omega)$ by the above and by Theorem \ref{theorem:precipitousyieldsulam} \eqref{theorem:precipitousyieldsulam 1}. 
By the combination of these two observations, it is consistent that there exists a normal ideal $K$ on $\omega_1$ such that $\GG_\infty(\omega_1,K,{\le}\omega)$ is undetermined. 

\medskip

This is still possible for $\aleph_2$. 
To see this, note that Shelah has shown that precipitousness of $I$ does not imply $(\omega,\kappa)$-distributivity of $I$ even if $\kappa=\omega_2$, $\CH$ holds and $I$ is normal (see \cite[Comments before Theorem~4]{MR853844}, \cite[Theorem 2]{MR853844} and \cite[Theorem 6.4 (2)]{shelah1981iterated}).
In this situation, there exists some $X\in I^+$ such that \emph{Cut} wins $\GG_{\omega_2}(X,I,\omega)$ by 
Theorem \ref{theorem:ulamanddistributivity}, but for all $Y\in I^+$, \emph{Cut} does not win $\GG_\infty(Y,I,{\le}\omega)$ by Theorem \ref{theorem:precipitousyieldsulam}. 
Using \cite[Theorem 4]{MR853844} and Theorem \ref{theorem:precipitousyieldsulam} \eqref{theorem:precipitousyieldsulam 1} as above, there exists $X\in I^+$ such that \emph{Choose} does not win $\GG_\infty(X,I,{\le}\omega)$. 
Hence it is consistent that there exists a normal ideal $K$ on $\omega_2$ such that $\GG_\infty(\omega_2,K,{\le}\omega)$ is undetermined.

\section{Final remarks and open questions}\label{section:questions}

We have seen that for most cut and choose games, the existence of a winning strategy for \emph{Cut} has a precise characterization as in Figure \ref{figure characterisations} below (where (*) indicates that the stated equivalence is only known to hold under certain additional completeness and cardinal arithmetic assumptions). 
Let $\kappa$ be an uncountable regular cardinal, $\gamma<\kappa$ a limit ordinal, $\nu$ a regular cardinal and $I\supseteq \bd_\kappa$ a ${<}\kappa$-complete ideal on $\kappa$. 
For the proofs of the first three rows of Figure \ref{figure characterisations}, see Observations \ref{observation:cutwinsulam}, \ref{cut does not win the weak Ulam game for large kappa} and \ref{observation:cutwinsidealulam}, and for the remaining ones 
Observation \ref{observation:generalizedtrivialforcut} \eqref{observation:generalizedtrivialforcut 2} and \eqref{observation:generalizedtrivialforcut 1}, Theorems  \ref{theorem:ulamanddistributivity}, \ref{theorem:ulamandidealdistributivity} and  \ref{char precipitous} \eqref{char precipitous 1}. 
Recall the equivalence between $\UU$-games (where \emph{Cut} presents disjoint partitions of size $\nu<\kappa$) and $\GG$-games proved in Observation \ref{observation:stargeneralizes}. 

\begin{figure}[H] 
\begin{tabular}{ l l } 
\emph{Cut} wins & Characterization \\ 
\hline 
$\UU(\kappa,{\le}\gamma)$, $\UU(\kappa,I,{\le}\gamma)$ & $\kappa\leq 2^{<\gamma}$ \\ 
$\UU(\kappa,\gamma)$, $\UU(\kappa,I,\gamma)$ & $\kappa\leq 2^{|\gamma|}$ \\ 
$\UU(\kappa,\bd_\kappa,{<}\kappa)$ & $\kappa$ is not weakly compact \\ 
$\UU_\nu(\kappa,I,{\le}\gamma)$ & $\kappa\le\nu^{<\gamma}$ \\ 
$\UU_\nu(\kappa,I,\gamma)$ & $\kappa\le\nu^{|\gamma|}$ \\ 
$\forall X\,\GG_\nu(X,I,{<}\gamma)$ & $I$ is not uniformly $({<}\gamma,\nu)$-distributive (*) \\ 
$\forall X\,\GG_\nu(X,I,{\le}\gamma)$ & $I$ is not $({\le}\gamma,\nu)$-distributive (*)  \\ 
$\forall X\,\GG_\nu(X,I,\gamma)$ & $I$ is not $(\gamma,\nu)$-distributive (*)\\ 
$\forall X\,\GG_\infty(X,I,{\le}\gamma)$ & $I$ is not precipitous\\ 
\hline 
\end{tabular} 
\medskip 
\caption{Characterizations of the existence of winning strategies for \emph{Cut} in various cut and choose games. } 
\label{figure characterisations} 
\end{figure} 

Regarding winning strategies for \emph{Choose}, we have seen in Corollary \ref{strategic closure} that \emph{Choose} wins $\GG_\infty(X,\B,\gamma)$ for all $X\in Q$ if and only if  $\B$ is ${<}\gamma^+$-strategically closed. 
However, it is often much harder to characterize the existence of winning strategies for \emph{Choose}. 
For instance, it is open in many cases whether the existence of winning strategies for \emph{Choose} in different cut and choose games can be separated. 


\begin{question}
\label{Question - separate games for choose} 
  Is it possible to separate the existence of winning strategies for \emph{Choose} in  $\UU(\kappa,{\le}\gamma)$ and $\GG_\infty(\kappa,\bd_\kappa,\gamma)$ for some limit ordinal $\gamma<\kappa$? 
\end{question}

Besides these two extreme cases, the previous question is also open for games in between the above ones, such as $\UU(\kappa,\gamma)$, or the variant of $\UU(\kappa,\gamma)$ with a different (finite or infinite) number of elements required in the final intersection. 
An obvious candidate for a model to answer Question \ref{Question - separate games for choose} positively
would be the model obtained in the proof of Theorem~\ref{theorem:winulamatsmallinaccessible}.

\begin{question}\label{question:intermediate}
  In the model obtained in the proof of Theorem \ref{theorem:winulamatsmallinaccessible}, where $\kappa$ is inaccessible but not weakly compact, and in which \emph{Choose} has a winning strategy in the game $\UU(X,I,{\le}\gamma)$ whenever $\gamma<\kappa$ and $X\in I^+$, with $I$ being a hopeless ideal as obtained from the generic measurability of $\kappa$ in that model, does \emph{Nonempty} have a winning strategy in the precipitous game $\PP(I,\gamma)$ (and hence in the game $\GG_\infty(X,I,{\le}\gamma)$ for every $X\in I^+$ by Theorem \ref{theorem:precipitousyieldsulam})?
\end{question}

A related natural question regarding the ideal games studied in Section \ref{section:ideal} is whether the existence of a winning strategy for \emph{Choose} can ever depend on the choice of ideal. We formulate our question for ideal games of the form $\UU(\kappa,I,\gamma)$ in the below, however analogous questions could clearly be asked regarding all sorts of variants of these games that we study in our paper.

\begin{question}\label{question:differentideals}
Is it consistent that there exist ${<}\kappa$-closed ideals $I,J \supseteq \bd_\kappa$ on $\kappa$ so that \emph{Choose} has a winning strategy for $\UU(\kappa,I,\gamma)$, but not for $\UU(\kappa,J,\gamma)$? \end{question}

Note that by Observation \ref{observation:stronglycompact}, this cannot happen if $\kappa$ is $2^\kappa$-strongly compact.
This question is equivalent to the question whether the existence of winning strategies for \emph{Choose} in any of the ideal cut and choose games introduced in this paper can depend on the choice of starting set $X\in I^+$, when $I\supseteq\bd_\kappa$ is a ${<}\kappa$-complete ideal on $\kappa$. 
If Question \ref{question:differentideals} had a positive answer, then we could easily form an ideal $K$ that is generated by isomorphic copies of such ideals $I$ and $J$ on two disjoint subsets of $\kappa$ such that the existence of a winning strategy of \emph{Choose} in the game $\UU(X,K,\gamma)$ depends on the starting set $X$. 
In the other direction, if the existence of winning strategies for \emph{Choose} in some ideal cut and choose game related to $I$ can depend on the choice of starting set $X\in I^+$, then we can consider the games with respect to the ideals obtained by restricting $I$ to those starting sets, exactly one of which will be won by \emph{Choose}.

\medskip


In Section \ref{section:smallinaccessibles}, we have seen that it is consistent that \emph{Choose} wins $\UU(\kappa,\bd_\kappa,\gamma)$ for a small inaccessible cardinal $\kappa$. 
The cardinal studied there is not weakly compact, however Mahlo. 
We do not know if the latter is necessary. 

\begin{question} 
Is it consistent that \emph{Choose} wins $\UU(\kappa,{\le}\omega)$, where $\kappa$ is the least inaccessible cardinal? 
\end{question} 

Theorem \ref{theorem:ulamstrength} and Observation \ref{observation:genericallyulam} show that if $\gamma<\kappa$ is regular and \emph{Choose} wins $\UU(\kappa,{\le}\gamma)$, then $\kappa$ is generically measurable as witnessed by some ${<}\gamma$-closed notion of forcing, which in turn implies that \emph{Choose} wins $\UU(\kappa,\bd_\kappa,{<}\gamma)$. We ask if either of these implications can be reversed:

\begin{question}
  Let $\gamma<\kappa$ be regular.
  \begin{enumerate}
    \item If $\kappa$ is generically measurable as witnessed by some ${<}\gamma$-closed notion of forcing, does it follow that \emph{Choose} wins $\UU(\kappa,{\le}\gamma)$?
    \item If \emph{Choose} wins $\UU(\kappa,\bd_\kappa,{<}\gamma)$, does it follow that $\kappa$ is generically measurable as witnessed by some ${<}\gamma$-closed notion of forcing?
  \end{enumerate}
  This latter question makes sense also if $\gamma=\kappa$:
\begin{enumerate}
\setcounter{enumi}{2}
    \item If \emph{Choose} wins $\UU(\kappa,\bd_\kappa,{<}\kappa)$, does it follow that $\kappa$ is generically measurable as witnessed by some ${<}\kappa$-closed notion of forcing?
  \end{enumerate}
\end{question}

Regarding the characterisations of distributivity in Section \ref{section:distributivity}, we have partially answered a question of Dobrinen \cite[paragraph after Theorem 1.4]{dobrinen}, however the following is still open to some extent.

\begin{question}
  Which degree of completeness of a poset $\B$ is necessary in order to show that the $(\gamma,\nu)$-distributivity of $\B$ implies that \emph{Cut} does not have a winning strategy in the game $\GG_\nu(X,\B,\gamma)$?
\end{question} 
  
Note that ${<}(\nu^{<\gamma})^+$-completeness suffices by Theorem \ref{theorem:ulamanddistributivity}. 
In conjunction with Lemma \ref{lemma: eq distributive}, this shows that the existence of a winning strategy for \emph{Cut} in the games $\GG_\nu(X,\B,\gamma)$ and $\GG_{\nu^{<\gamma}}(X,\B,\gamma)$ 
are equivalent. 
In most cases, an analogous result holds with respect to the existence of winning strategies for \emph{Choose} by Theorem \ref{theorem:increasedsplitting}, however we do not know whether this is the case when $\gamma$ is a limit cardinal such that $\nu^\delta<\nu^{<\gamma}$ whenever $\delta<\gamma$. 
In this direction, Theorem \ref{theorem:increasedsplitting}~\eqref{theorem:increasedsplitting 2} only shows that $\GG_\nu(X,\B,\gamma)$ and $\GG_{<(\nu^{<\gamma})}(X,\B,\gamma)$ are equivalent. 

\begin{question} 
Is the existence of a winning strategy for \emph{Choose} in the games $\GG_\nu(X,\B,\gamma)$ and $\GG_{\nu^{<\gamma}}(X,\B,\gamma)$ equivalent, assuming that $\B$ is ${<}(\nu^{<\gamma})^+$-complete? 
\end{question}

The following questions concern the relationship between cut and choose games and Banach-Mazur games.
The first one is connected with the separation of the existence of winning strategies for \emph{Choose} for different games. 
For instance, is it consistent that \emph{Choose} wins $\UU(\omega_2,{\le\omega})$, but \emph{Cut} wins $\GG_\infty(\omega_2,I,{\le}\omega)$ for arbitrary ${<}\kappa$-complete ideals $I\supseteq\bd_\kappa$? 
This is equivalent to the following question. 

\begin{question}\label{question:noprecipitous}
  Is it consistent that \emph{Choose} wins the game $\UU(\omega_2,{\le}\omega)$, but there are no precipitous ideals on $\omega_2$? 
\end{question}

Similarly, one can ask about situations in which certain cut and choose games are undetermined. 
Note that for all cut and choose games studied in this paper, the existence of a winning strategy for \emph{Choose} implies that there is an inner model with a measurable cardinal. 
Therefore, the statements listed in Figure \ref{figure characterisations} show that all games of length $\omega$ where \emph{Cut} plays partitions of size $\nu<\kappa$ can be simultaneously undetermined, for instance when $\kappa=\omega_2$ and $\CH$ holds. 
Concerning larger partitions, we have seen at the end of Section \ref{section:ulamandprecipitous} that it is consistent  that each of $\GG_\infty(\omega_1,I,{\le}\omega)$ and $\GG_\infty(\omega_2,I,{\le}\omega)$ is undetermined, and in fact, the former holds for any normal precipitous ideal $I$ on $\omega_1$. This leads to the following question, which is left open by the discussion at the end of Section \ref{section:ulamandprecipitous}.

\begin{question}
  Can the game $\GG_\infty(\kappa,I,\omega)$ be undetermined for some ${<}\kappa$-complete ideal $I\supseteq\bd_\kappa$ on $\kappa$?
\end{question}

This leaves open whether the games $\UU(\omega_2,{\le}\omega)$ and $\GG_\infty(\omega_2,I,{\le}\omega)$ can be simultaneously undetermined for some ${<}\kappa$-complete ideal $I\supseteq\bd_\kappa$. This would follow from a positive answer to the next question, which is closely connected to Question~\ref{question:noprecipitous}.

\begin{question}
  Is it consistent that there is a precipitous ideal on $\omega_2$, however \emph{Choose} does not win $\UU(\omega_2,\omega)$?
\end{question}

We would finally like to mention a few more natural question about Banach-Mazur games. 
We defined $\PP(I,\gamma)$ for $\gamma>\omega$ so that at limit stages, \emph{Nonempty} goes first. Can we separate the existence of winning strategies for either player between this game and its variant where we let \emph{Empty} go first at limit stages?
Moreover, the game $\PP^+(I,\gamma)$ depends only on the isomorphism type of $P(\kappa)/I$. 
Is this also the case for $\PP(I,\gamma)$? 
For instance, if $I$ and $J$ are ideals on $\kappa$ with $P(\kappa)/I \cong P(\kappa)/J$, does \emph{Empty} win $\PP(I,\gamma)$ if and only if \emph{Empty} wins $\PP(J,\gamma)$?

\bibliographystyle{plain}
\bibliography{references}

\end{document}